\theoremstyle{plain}
\newtheorem{theorem}{Theorem}[section]
\newtheorem{corollary}[theorem]{Corollary}
\newtheorem{lemma}[theorem]{Lemma}
\newtheorem{proposition}[theorem]{Proposition}
\newtheorem{thmy}{Theorem}
\newenvironment{thmx}{\stepcounter{theorem}\begin{thmy}}{\end{thmy}}
\theoremstyle{definition}
\newtheorem{remark}[theorem]{Remark}
\newtheorem{example}[theorem]{Example}
\newtheorem{definition}[theorem]{Definition}
\newcommand{\Z}{\mathbf{Z}}
\newcommand{\R}{\mathbf{R}}
\newcommand{\Q}{\mathbf{Q}}
\newcommand{\C}{\mathbf{C}}
\newcommand{\frakm}{\mathfrak{m}}
\newcommand{\ord}{\textrm{ord}}
\renewcommand{\H}{\mathcal{H}}
\newcommand{\ds}{\displaystyle}
\numberwithin{equation}{section}
\crefname{section}{\S}{\S\S}
\Crefname{section}{\S}{\S\S}
\def\@tocline#1#2#3#4#5#6#7{\relax
  \ifnum #1>\c@tocdepth 
  \else
    \par \addpenalty\@secpenalty\addvspace{#2}%
    \begingroup \hyphenpenalty\@M
    \@ifempty{#4}{%
      \@tempdima\csname r@tocindent\number#1\endcsname\relax
    }{%
      \@tempdima#4\relax
    }%
    \parindent\z@ \leftskip#3\relax \advance\leftskip\@tempdima\relax
    \rightskip\@pnumwidth plus4em \parfillskip-\@pnumwidth
    #5\leavevmode\hskip-\@tempdima
      \ifcase #1
       \or\or \hskip 1em \or \hskip 2em \else \hskip 3em \fi%
      #6\nobreak\relax
    \dotfill\hbox to\@pnumwidth{\@tocpagenum{#7}}\par
    \nobreak
    \endgroup
  \fi}
\renewcommand{\O}{\mathcal{O}}
\newcommand{\J}{\mathcal{J}}
\newcommand{\Gal}{\textrm{Gal}}
\DeclareMathOperator{\Ends}{Ends}
\DeclareMathOperator{\mass}{mass}
\newcommand{\M}{\mathcal{M}}
\DeclareMathOperator{\rig}{rig}
\newcommand{\hotimes}{\widehat{\otimes}}
\author{Matthew Stevenson}
\title{A Non-Archimedean Ohsawa--Takegoshi Extension Theorem}
\date{\today}
\address{Department of Mathematics, University of Michigan, Ann Arbor, MI
48109--1043, USA}
\email{\href{mailto:stevmatt@umich.edu}{stevmatt@umich.edu}}
\urladdr{\url{http://www-personal.umich.edu/~stevmatt/}}
\begin{document}

\maketitle

\begin{abstract}
We prove an Ohsawa--Takegoshi-type extension theorem on the Berkovich closed unit disc over certain non-Archimedean fields.\ As an application, we establish a non-Archimedean analogue of Demailly's regularization theorem for quasisubharmonic functions on the Berkovich unit disc.
\end{abstract}

\section{Introduction}\label{section:intro}

The Ohsawa--Takegoshi theorem is one of the fundamental extensions theorems in complex geometry. Originating in the foundational paper~\cite{ohsawa-takegoshi} of Ohsawa--Takegoshi, many generalizations and improvements have since been shown; see~\cite{manivel1993theoreme,berndtsson1996extension,siu1996fujita,demailly2000ohsawa,mcneal-varolin}. 
Its many applications include Siu's much-celebrated proof~\cite{siu1998invariance} of the deformation invariance of plurigenera.
In its simplest form, the classical Ohsawa--Takegoshi theorem asserts the following: given a plurisubharmonic function $\varphi$ on the complex unit disc $\mathbf{D}$, a point $z \in \mathbf{D} \backslash \{ \varphi = -\infty \}$, and a value $a \in \C$, there is a holomorphic function $f$ on $\mathbf{D}$ such that $f(z) = a$ and 
$$
\int_{\mathbf{D}} |f(x)|^2 e^{-2\varphi(x)} d\lambda \leq \pi|f(z)|^2 e^{-2\varphi(z)}.
$$
The constant $\pi$ is optimal, as shown in~\cite[Theorem 1]{blocki}. There is also an adjoint formulation of the result, which concerns the extension of a holomorphic 1-form rather than of a holomorphic function.

In~\cite{berkovich,berkovich93}, Berkovich introduced his theory of analytic spaces over a complete non-Archimedean field. 
In recent years, analogues of many theorems from complex geometry have been developed in the non-Archimedean setting, notably the solution in~\cite{bfj15} of a non-Archimedean Calabi--Yau-type problem. It is natural to ask for an analogue of the Ohsawa--Takegoshi extension theorem in the non-Archimedean setting, as well.

Let $k$ be a field, complete with respect to a non-Archimedean absolute value $| \cdot |$. Let $k\{T \}$ denote the Tate algebra in one variable over $k$, $X = E_k(1)$ denote the Berkovich closed unit disc over $k$, and $A_X \colon X \to [0,+\infty]$ denote Temkin's canonical metric on the canonical bundle $\omega_{X/k}$ (more concretely, $A_X$ is $-\log$ of the radius function). For any quasisubharmonic function $\varphi$ on $X$ and analytic function $f \in k\{ T\}$, consider the norm
$$
\| f \|_{\varphi} \coloneqq \sup_{X \backslash Z(\varphi)} |f|e^{-\varphi - A_X} 
$$
where $Z(\varphi ) \coloneqq \{ \varphi = -\infty \}$. The function $\varphi + A_X$ can be thought of as a metric on $\omega_{X/k}$ and $f$ as a global section of $\omega_{X/k}$, so the norm $\| f \|_{\varphi}$ measures the length of the section $f$ in the metric $\varphi + A_X$. See~\S\ref{section:preliminaries} for further details.

Our first main result is a non-Archimedean version of the Ohsawa--Takegoshi extension theorem on the Berkovich closed unit disc $X = E_k(1)$.

\begin{thmx}\label{thm:main}
Assume $k$ is algebraically closed, trivially-valued, or is spherically complete of residue characteristic zero. 
Let $\varphi$ be a quasisubharmonic function on $X = E_k(1)$.
For any $z \in X$, there exists a nonzero polynomial $f \in k[T]$ such that
$$
\lim_{\epsilon \to 0^+} \| f \|_{(1+\epsilon)\varphi} \leq |f(z)| e^{-\varphi(z)}.
$$
If $\varphi(z) = -\infty$, then we may find $f$ such that $\lim_{\epsilon \to 0^+} \| f\|_{(1+\epsilon)\varphi} < +\infty$. Moreover, if $k$ is algebraically closed and $z$ is a rigid point of $X$, then for any value $a \in \H(z)^*=k^*$, we may find $f$ such that $f(z) = a$.
\end{thmx}

To clarify the relationship between~\cref{thm:main} and the adjoint formulation of the Ohsawa--Takegoshi extension theorem, we must discuss the non-Archimedean analogues of (pluri)subharmonic functions and volume forms, which arise in the statement of the latter. We are interested in the class of quasisubharmonic functions on the Berkovich closed unit disc, which are the non-Archimedean analogue of (pluri)subharmonic functions on the complex unit disc. These are briefly discussed in~\S\ref{section:quasisubharmonic}; see~\cite{baker-rumely, dynberko} for a comprehensive treatment. 

On complex manifolds, volume forms naturally correspond to smooth metrics on the canonical bundle. 
The non-Archimedean analogue of a metric on the canonical bundle has been examined by Temkin in~\cite{temkin}. 
Temkin produces a canonical lower-semicontinuous metric $A_X$ on the canonical bundle $\omega_{X/k}$ of a $k$-analytic space $X$, extending the weight function of Musta\cb{t}\u{a}--Nicaise~\cite{mustata-nicaise}.
Temkin's metric is discussed in~\S\ref{section:temkin}. This is the natural candidate to replace the volume form that appears in the statement of the classical Ohsawa--Takegoshi theorem. 

In~\cref{thm:main}, a section is measured using the sequence of norms $\| \cdot \|_{(1+\epsilon)\varphi}$ as $\epsilon \to 0^+$ instead of with the single norm $\| \cdot \|_{\varphi}$, which is what one might expect from the classical Ohsawa--Takegoshi theorem. The former proves to give the correct analogy with the complex setting, as the following example demonstrates. Consider the (pluri)subharmonic function $\varphi = \alpha \log |z|$, with $\alpha > 0$, on the complex unit disc $\mathbf{D}$; then, it is elementary to check that $\int_{\mathbf{D}} e^{-2\varphi} d\lambda <+\infty$ if and only if $\alpha < 1$. Similarly, consider the quasisubharmonic function $\varphi = \alpha \log |T|$, with $\alpha > 0$, on the Berkovich unit disc $X = E_k(1)$; then, $\lim_{\epsilon \to 0^+} \| 1 \|_{(1+\epsilon)\varphi} < +\infty$ if and only if $\alpha < 1$.

As an application of \cref{thm:main}, we prove a regularization theorem for quasisubharmonic functions on the Berkovich unit disc. Certain results in this direction already exist in the literature: in~\cite[Theorem 2.10]{dynberko}, it was shown that any quasisubharmonic function on $X$ is the decreasing limit of bounded quasisubharmonic functions. A similar argument appears in~\cite[\S4.6]{favre06}. However, 
these constructions use
only the tree structure on the Berkovich unit disc (in particular, they do not incorporate the analytic structure). It is therefore unlikely that these proofs can be generalized to higher dimensions. 

As inspiration, we use the much-celebrated regularization theorem of Demailly for a plurisubharmonic function $\phi$ on a bounded pseudoconvex domain $\Omega \subset \C^n$. To a plurisubharmonic function $\phi$ on $\Omega$ and a positive integer $m$, we associate the Hilbert space $\mathscr{H}_{m\phi}$ of holomorphic functions on $\Omega$ satisfying the integrability condition 
$$\int_{\Omega} |f|^2 e^{-2m\phi} d\lambda\ < +\infty.$$
The Demailly approximation associated to $\mathscr{H}_{m\phi}$ is a plurisubharmonic function $\phi_m$ on $\Omega$ with analytic singularities, 
and the sequence $(\phi_m)_{m=1}^{\infty}$ converges pointwise and in $L^1_{\textrm{loc}}$ to $\phi$.
See~\cite{demailly92} for further details. 

We adopt the same philosophy in the non-Archimedean setting: to a quasisubharmonic function $\varphi$ on $X$, we associate the ideal $\H_{\varphi}$ of the Tate algebra $k\{ T \}$ consisting of those analytic functions $f$ satisfying the finiteness condition
$$
\| f \|^+_{\varphi} \coloneqq \lim_{\epsilon \to 0^+} \sup_{X \backslash Z(\varphi)} |f|e^{-(1+\epsilon)\varphi - A_X} < + \infty
$$
For each positive integer $m$, we define the non-Archimedean Demailly approximation $\varphi_m$ by the formula 
$$
\varphi_m \coloneqq \frac{1}{m} \left( \sup_{f \in \H_{m\varphi} \backslash \{ 0 \}} \log \frac{|f|}{\| f\|^+_{m\varphi}} \right)^*,
$$
where $( - )^*$ denotes the upper-semicontinuous regularization. We show that $\varphi_m$ is a quasisubharmonic function with analytic singularities on $X$. 

The ideal sheaf on $X$ associated to $\H_{\varphi}$ behaves like a non-Archimedean multiplier ideal associated to $\varphi$, and may be of independent interest. This idea is briefly explored in~\S\ref{section:multiplier},\ where we show that $\H_{\varphi}$ generates the stalks of a locally-defined multiplier ideal sheaf associated to $\varphi$. These multiplier ideals are used to show that the ideals $\H_{\varphi}$ satisfy a subadditivity property. 

We prove the following non-Archimedean analogue of Demailly's regularization theorem.

\begin{thmx}\label{thm:application}
Assume $k$ is algebraically closed, trivially-valued, or is spherically complete with residue characteristic zero. 
If $\varphi$ is a quasisubharmonic function on $X$ with $\varphi \leq 0$, then $\varphi \leq \varphi_m \leq \varphi + \frac{A_X}{m}$. In particular, the sequence $(\varphi_m)_{m=1}^{\infty}$ converges pointwise to $\varphi$ on $\{ A_X < +\infty \} \subseteq X$. 
\end{thmx}

In~\cref{thm:application}, the crucial inequality $\varphi_m \geq \varphi$ is a consequence of~\cref{thm:main}. In principle, a statement similar to~\cref{thm:application} ought to be possible in higher dimensions, but this would likely require a higher-dimensional version of the Ohsawa--Takegoshi theorem on analytic spaces.

Nonetheless, there has been much work done on the development of pluripotential theory on analytic spaces. Quite generally, Chambert-Loir and Ducros have introduced in~\cite{clducros} the notion of \emph{continuous} plurisubharmonic functions. In addition, semipositive metrics on line bundles were studied in detail by~\cite{zhang1995small, gubler1998local}, among others. On analytic curves, potential theory is well-established, due to the work of Thuillier~\cite{thuillier2005theorie}. 
A regularization theorem similar to~\cref{thm:application} is proven in the higher-dimensional setting in~\cite[Theorem B]{bfj16}. A related discussion appears in~\cite[\S 5]{bfj08}. 

The techniques involved in the proof of~\cref{thm:main} rely crucially on the tree structure of the Berkovich unit disc, and thus the proof does not, a priori, generalize to higher dimensions. More precisely, the proof proceeds by first constructing a finite subtree $\Gamma_{\varphi}$ of $X$ that captures the worst of the singularities of $\varphi$, and reducing to proving~\cref{thm:main} on the convex hull of $\Gamma_{\varphi} \cup \{ z \}$. For some end of the tree $\Gamma_{\varphi}$, we construct a new quasisubharmonic function $\phi$ such that $\Gamma_{\phi} \subsetneq \Gamma_{\varphi}$ and reduce to proving~\cref{thm:main} for $\phi$. This inductively reduces~\cref{thm:main} to a simple case, which is solved directly.

The paper is organized as follows. In~\S\ref{section:preliminaries}, we briefly review the definition of the Berkovich unit disc, and its potential theory. In~\S\ref{section:temkin}, we discuss Temkin's metric on the canonical bundle. In~\S\ref{section:ot}, we prove \cref{thm:main} and a variant thereof. In~\S\ref{section:application}, we construct the ideal $\H_{\varphi}$ and the non-archimedean Demailly approximation $\varphi_m$ associated to a quasisubharmonic function $\varphi$ and a positive integer $m \geq 1$. These are then used to prove \cref{thm:application}.\\

\noindent\textbf{Acknowledgements}. I would like to thank my advisor, Mattias Jonsson, for suggesting the problem and for his invaluable help, guidance, and support.\
I would also like to thank Kiran Kedlaya, J\'er\^{o}me Poineau, and Daniele Turchetti for helpful conversations regarding~\cref{multiplicity_descent}.\
I am grateful to Takumi Murayama and Emanuel Reinecke for their many comments on a previous draft. 
Finally, I would like to thank the anonymous referee for their many helpful comments, and for pointing out an error in a previous version.
This work was partially supported by NSF grant DMS-1600011.

\section{Preliminaries}\label{section:preliminaries}

Let $k$ be a field complete with respect to a (possibly trivial) non-Archimedean absolute value $| \cdot |$, and let $K \coloneqq \widehat{k^a}$ denote its completed algebraic closure. Let $k^{\circ} \coloneqq \{ | \cdot | \leq 1 \}$ denote the valuation ring of $k$, and $K^{\circ}$ the valuation ring of $K$. 

To establish conventions, we briefly recall in~\S\ref{section:disc} the definition of the Berkovich unit disc over a (not necessarily algebraically closed) non-Archimedean field $k$, and describe its points; in~\S\ref{section:quasisubharmonic}, we review the metric tree structure and potential theory on the Berkovich unit disc. Finally, in~\S\ref{section:temkin}, we discuss the canonical metric of Temkin on the canonical bundle of a $k$-affinoid space.

\subsection{The Berkovich unit disc}\label{section:disc}
The \emph{Tate algebra} $k\{ T \}$ in the variable $T$ is the subalgebra of $k[[T]]$ consisting of those power series $f = \sum_{i=0}^{\infty} a_i T^i$, with $a_i \in k$, such that $|a_i| \to 0$ as $i \to \infty$. The Tate algebra is a Banach $k$-algebra when equipped with the \emph{Gauss norm}
$$
|f(x_G)| \coloneqq \max_{i \geq 0} |a_i |.
$$
See~\cite[\S5.1]{bgr} for further details.

The \emph{Berkovich unit disc} is the set $X \coloneqq \M(k\{ T \})$ of multiplicative seminorms on the Tate algebra $k\{ T \}$ which extend the absolute value on $k$ and which are bounded above by the Gauss norm $x_G$.
When equipped with the topology of pointwise convergence, $X$ is a compact Hausdorff path-connected space. 
Let $\H(x)$ denote the completed residue field of $x \in X$.

Define a partial order $\leq $ on $X$ by declaring that $x \leq y$ if and only if $|f(x)| \leq |f(y)|$ for all $f \in k\{ T \}$; in this way, the pair $(X, \leq )$ becomes a rooted tree with root at the Gauss point $x_G$. The tree structure on $X$ is discussed in more detail in~\cref{section:quasisubharmonic}. 

Let $\Gal(k^a/k)$ denote the group of automorphisms of $k^a$ fixing $k$ (though $k^a/k$ is not Galois when $k$ is not perfect), and let $X_K \coloneqq \M(K \{ T \})$ denote the ground field extension of $X$ to $K$, which comes equipped with a continuous surjective map $\pi \colon X_K \to X$. The ground field extension $X_K$ carries a $\Gal(k^a/k)$-action, extending the natural action on $K$ by isometries, such that $\pi$ induces a homeomorphism $X_K / \Gal(k^a/k) \stackrel{\sim}{\longrightarrow} X$, where $X_K /\Gal(k^a/k)$ has the quotient topology. See~\cite[p.18]{berkovich} and~\cite[\S1]{baker-rumely} for further details.

Given $z \in K^{\circ}$ and $r \in [0,1]$, we may construct a point $x_{z,r} \in X_K$ as the sup-norm over the closed disc $\overline{D}(z,r) \subset K^{\circ}$ of radius $r$ about $z$; that is,
$$
|f(x_{z,r})| \coloneqq \sup_{z' \in \overline{D}(z,r)} |f(z')|, \textrm{\hspace{5mm} $f \in K\{ T \}$.}
$$ 
The point $\pi(x_{z,r}) \in X$ will again be denoted by $x_{z,r}$. When $K \not= k$, it is possible that two pairs $(z,r)$ and $(z',r')$ may define the same point of $X$; for example, if $z' \in \Gal(k^a/k) \cdot \{ z \}$ and $r > 0$, then $x_{z',r} = x_{z,r}$.

When $z \in (k^a)^{\circ}$ and $r = 0$, the associated point $x_{z,0}$ is called a \emph{rigid point}; the seminorm $x_{z,0}$ coincides with the seminorm induced by the maximal ideal of $k\{ T \}$ generated by the minimal polynomial of $z$ over $k$. Let $X^{\rig}$ denote the subset of rigid points. The points of $X$ can be classified into 4 types:
\begin{enumerate}[(i)]
\item a type 1 point is of the form $x_{z,0} \in X$ for $z \in K^{\circ}$;
\item a type 2 point is of the form $x_{z,r} \in X$ for $z \in K^{\circ}$ and $r \in (0,1] \cap |K^*|$;
\item a type 3 point is of the form $x_{z,r} \in X$ for $z \in K^{\circ}$ and $r \in (0,1] \backslash |K^*|$;
\item a type 4 point is the pointwise limit of $x_{z_i,r_i}$ such that the corresponding discs $\overline{D}(z_i,r_i)$ form a decreasing sequence with empty intersection.
\end{enumerate}
See~\cite[p.18]{berkovich} for further details; when $K \not= k$, see also~\cite[Proposition 2.2.7]{kedlaya}.
When $k$ is nontrivially-valued, the set of rigid points and the set of type 2 points are both dense in $X$. Points of type 4 exist only when $k$ is not spherically complete. 

For any $x \in X$, the \emph{multiplicity} of $x$ is $m(x) \coloneqq \# \pi^{-1}(x) \in \Z_{>0} \cup \{ \infty \}$. The points of type 1 with finite multiplicity are precisely the rigid points. All points of type 2 and type 3 have finite multiplicity. 

\begin{lemma}\label{multiplicity_descent}
Assume $k$ is trivially-valued or has residue characteristic zero.
For any $x \in X$ of type 2 or 3, there exists $x' \in X^{\rig}$ such that $x' \leq x$ and $m(x') = m(x)$.
\end{lemma}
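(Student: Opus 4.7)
The plan is to split on the hypothesis. If $k$ is trivially-valued then $|K^*|=\{1\}$, so for type 2 points one has $r=1$ with $x=x_G$, and the rigid point $x':=x_{0,0}$ has multiplicity $1=m(x)$; for type 3 points $r\in(0,1)$ and the ultrametric forces $|z_i-z_j|\le r\iff z_i=z_j$, so each Galois conjugate of $z$ is its own $\sim_r$-class, and for any algebraic representative $z\in(k^a)^\circ$ of $x$ the rigid point $x':=x_{z,0}$ satisfies $m(x')=[k(z):k]_{\textrm{sep}}=m(x)$. Henceforth assume $k$ has residue characteristic zero, so $\textrm{char } k=0$; in particular $k$ is infinite, has non-trivial valuation, and every finite extension of $k$ is separable.

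Using density of $(k^a)^\circ$ in $K^\circ$, write $x=x_{z,r}$ with $z\in(k^a)^\circ$, and let $z_1,\dots,z_d$ denote the distinct Galois conjugates of $z$ under $G:=\Gal(k^a/k)$. Two preimages $x_{z_i,r},x_{z_j,r}\in X_K$ of $x$ coincide precisely when $\overline{D}(z_i,r)=\overline{D}(z_j,r)$, that is when $|z_i-z_j|\le r$; hence $m(x)$ equals the number of equivalence classes of $\sim_r$ on $\{z_1,\dots,z_d\}$ (the relation being transitive by the ultrametric inequality). Let $C=\{z_{i_1},\dots,z_{i_s}\}$ be the class of $z$ and $H\le G$ its setwise stabilizer; then $H$ is an open subgroup of index $m(x)$, and the fixed field $k_H:=(k^a)^H$ satisfies $[k_H:k]=m(x)$ by the Galois correspondence.

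The key construction is the average $w_0:=s^{-1}\sum_{j=1}^{s}z_{i_j}$. It lies in $k_H$ since $H$ permutes $C$, and the ultrametric inequality gives $|w_0-z|\le\max_j|z_{i_j}-z|\le r$ because $|s|=1$ in residue characteristic zero---this is the sole place where the hypothesis enters, and is the crux of the proof. If $k(w_0)=k_H$ set $w:=w_0$; otherwise, using the primitive element theorem, choose a generator $\beta$ of $k_H$ over $k$ and set $w:=w_0+t\beta$ for $t\in k^*$ with $|t||\beta|\le r$ chosen so that $w\notin L$ for every proper intermediate field $L\subsetneq k_H$. For each such $L$ the condition $w_0+t\beta\in L$ determines at most one value of $t$ (since $\beta\notin L$); there are finitely many intermediate fields and infinitely many $t\in k$ of small norm (e.g.\ powers of a uniformizer), so a valid $t$ exists. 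Then $x':=x_{w,0}$ is a rigid point with $x'\le x$ and $m(x')=[k(w):k]=[k_H:k]=m(x)$.
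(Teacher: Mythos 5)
Your proof is correct and rests on the same key idea as the paper's: average the Galois conjugates lying in a single disc, exploiting $|\ell|=1$ when the residue characteristic is zero. The paper's version is slightly leaner at the last step: it forms the analogous average $w_i$ for \emph{each} disc $\overline{D}_i$, observes that the $w_i$ are pairwise distinct (distinct closed discs of equal radius are disjoint) and permuted by $G$, and thus concludes directly that $|G\cdot\{w_1\}|=d=m(x)$, with no need to worry about $w_1$ generating too small a field. You instead establish $w_0\in k_H$ with $[k_H:k]=m(x)$ via the Galois correspondence, and then add a perturbation $w=w_0+t\beta$ to guarantee $k(w)=k_H$. That perturbation is actually unnecessary: applying the same transitivity argument, the orbit $G\cdot\{w_0\}$ consists of the averages over all $d$ discs, which are pairwise distinct, so $|G\cdot\{w_0\}|=d$ and hence $\Stab(w_0)=H$ and $k(w_0)=k_H$ automatically. (It is harmless, though, and does give a self-contained argument.) One small slip to correct: residue characteristic zero does \emph{not} imply nontrivial valuation --- a trivially-valued field of characteristic $0$ also has residue characteristic $0$. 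This doesn't break your proof, since you dispose of the trivially-valued case first; but the sentence ``in particular \dots\ has non-trivial valuation'' should be replaced by ``henceforth assume $k$ is nontrivially valued of residue characteristic zero,'' which is exactly the case remaining after the first paragraph. Your perturbation step does use nontrivial valuation (to find $t\in k^*$ with $|t\beta|\le r$), so this rewording is needed for the argument as you wrote it.
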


The statement of~\cref{multiplicity_descent} is not necessarily true for a nontrivially-valued field of positive residue characteristic. For example, if $k = \Q_p$ and $x$ is the type 2 point of $X_{\C_p}$ corresponding to the closed disc $\overline{D}(p^{1/p},|p|^{\frac{2p-1}{p(p-1)}})$ in $(\C_p)^{\circ}$, then $\pi(x) \in X$ is a type 2 point of with $m(\pi(x)) = 1$, but there is no $\Q_p$-rational point lying below it.

\begin{proof}
Let $d = m(x)$, let $x_{1},\ldots,x_{d}$ be the $\pi$-preimages of $x$, and let $G \coloneqq \Gal(k^a/k)$.
Each $x_i$ is the sup-norm over a closed disc $\overline{D}_i \subseteq K^{\circ}$ of radius $r(x) > 0$. 
It suffices to show that there is $z_1 \in \overline{D}_1 \cap (k^a)^{\circ}$ such that $G \cdot \{ z_1 \}  = \{ z_1,\ldots, z_d \}$ with $z_i \in \overline{D}_i$. 
Fix any $z_1 \in \overline{D}_1 \cap (k^a)^{\circ}$, then it is easy to see that any such $z_1$ has $\# G \cdot \{ z_1 \} \geq d$. 

Assume $k$ is trivially-valued. If $r(x) = 1$, the problem is trivial. If $r(x) < 1$, then each $\overline{D}_i$ contains a unique rigid point; in particular, $G \cdot \{ z_1 \}$ has size precisely equal to $d$.

Assume now that $k$ has residue characteristic zero. The case when $d=1$ is~\cite[Proposition 2']{ax-zeros} and we deduce the general case using a similar strategy (the $d=1$ case also follows from the main result of~\cite{schmidt}).

Let $z_1,\ldots,z_{\ell} \in G \cdot \{ z_1 \}$ be those conjugates of $z_1$ that lie in $\overline{D}_1$. We claim each $\overline{D}_i$ contains precisely $\ell$ elements of the orbit $G \cdot \{ z_1 \}$. 
Indeed, by assumption there exists $\sigma_i \in G$ such that $\sigma_i(z_1) \in \overline{D}_i$, from which it follows that $\sigma_i(z_j) \in \overline{D}_i$ for all $j=1,\ldots,\ell$ since $G$ acts by isometries. As $\sigma_i$ is an automorphism of $k^a$, it must give a bijection $G \cdot \{ z_1 \} \cap \overline{D}_1 \stackrel{\sim}{\to} G \cdot \{ z _1 \} \cap \overline{D}_i$, which gives the claim.
Now, for all $i=1,\ldots,d$, set
$$
w_i \coloneqq \ell^{-1} \sum_{u \in G \cdot \{ z_1 \} \cap \overline{D}_i} u.
$$
As the residue characteristic of $k$ is zero,  $|\ell | = 1$ and hence
$$
|\sigma_i(z_1) - w_i| = \left| \ell \sigma_i(z_1) - \sum_{u \in G \cdot \{ z_1 \} \cap \overline{D}_i } u \right| \leq \max_{u \in G \cdot \{ z_1 \} \cap \overline{D}_i } |\sigma_i(z_1) - u| \leq r(x).
$$
In particular, $w_i \in \overline{D}_i$. Moreover, we have $G \cdot \{ w_1 \} = \{ w_1,\ldots, w_d \}$ by construction.
\end{proof}

The Berkovich unit disc $X$ comes equipped with a structure sheaf $\O_X$ on the $G$-topology on $X$. The $k$-algebra of global sections of $\O_X$ is the Tate algebra $k\{ T \}$. See~\cite[2.3]{berkovich} for further details.

\subsection{Quasisubharmonic functions on the Berkovich unit disc}\label{section:quasisubharmonic} 
In this section, we discuss the metric tree structure and potential theory on the Berkovich unit disc. For a comprehensive treatment, see~\cite{baker-rumely} and~\cite{dynberko}.

For each $x \in X$, define an equivalence relation $\sim$ on the set $X \backslash \{ x \}$ by declaring $y \sim z$ if the paths $(x,y]$ and $(x,z]$ intersect. The \emph{tangent space} $T_{X,x}$ at $x$ is the set of equivalences classes of $X \backslash \{ x \}$ modulo $\sim$.
For each $\vec{v} \in T_{X,x}$, let $U(\vec{v})$ be the set of points of $X$ representing $\vec{v}$, and set $m(\vec{v}) \coloneqq \inf_{y \in U(\vec{v})} m(y)$. 
The subsets of the form $U(\vec{v})$, for some tangent direction $\vec{v}$, form a subbasis of open sets for the topology on $X$. See~\cite[\S2.3]{dynberko} for further details.

The closed unit disc $X$ may be equipped with a generalized metric, in the sense that the distance between two points may be infinite; in particular, $X$ gains the structure of a metric tree. This generalized metric is described as follows.  Let $r \colon X \to [0,1]$ denote the radius function, which can be thought of as a $\Gal(k^a/k)$-equivariant function on $X_{K}$.
Define a function $\alpha \colon X \to [0,+\infty]$ by specifying that $\alpha(x_G) = 0$ and 
\begin{equation}\label{definition of alpha}
\alpha(x) - \alpha(y) = -\int_y^x \frac{1}{m(z)} d \left( \log r(z) \right),
\end{equation}
for any two distinct points $x,y \in X$, where the integral is taken over the unique path in $X$ joining the points $x$ and $y$. 
These constraints completely determine a function $\alpha \colon X \to [-\infty, + \infty]$ whose restriction to any segment is monotone decreasing. Observe that, when $k = k^a$, $\alpha = -\log r$. This, in turn, induces a generalized metric $d$ on $X$ by setting
$$
d(x,y) \coloneqq |\alpha(x) - \alpha(x \vee y)| + |\alpha(y) - \alpha(x \vee y)|
$$
for $x, y \in X$; here, $x \vee y$ is the least upper bound of $x$ and $y$.
The rooted tree $(X,\leq )$ acquires the structure of a metric tree
when equipped with the generalized metric $d$. It is important to note that the topology on $X$ induced by $d$ is strictly finer than the native topology.

One can discuss potential theory and the notion of quasisubharmonic functions on any metric tree, as developed in~\cite[\S2.5]{dynberko}. We briefly recall this theory in the special case of the Berkovich disc. This is also discussed in~\cite[\S 5]{baker-rumely} (though our conventions differ slightly). 

Fix a finite atomic measure $\rho_0$ supported at $x_G$, i.e.\ $\rho_0$ is a positive real multiple of the Dirac mass $\delta_{x_G}$ at the Gauss point $x_G$. A function $\varphi \colon X \to [-\infty, \infty)$ is called \emph{$\rho_0$-subharmonic} if it satisfies:
\begin{enumerate}
\item for every finite subtree $Y \subset X \backslash X^{\rig}$ containing $x_G$, $\varphi |_Y$ is a continuous function on $Y$ such that:
\begin{enumerate}
\item $\varphi|_Y$ is convex on any segment in $Y$ that does not contain $x_G$;
\item for any $y \in Y$, $$\rho_0 \{ y \} + \displaystyle\sum_{\vec{v} \in T_y Y} d_{\vec{v}} \left( \varphi|_Y \right) \geq 0,$$ where $d_{\vec{v}} \left( \varphi |_Y \right)$ denotes the directional derivative of $\varphi|_Y$ in the direction $\vec{v}$.
\end{enumerate}
\item $\varphi$ is the limit of its retractions to finite subtrees containing $x_G$; more precisely, if $\{ Y_{i} \}_{i \in I}$ denotes the net of finite subtrees of $X \backslash X^{\rig}$ containing $x_G$ and if $\mathfrak{r}_i \colon X \to Y_i$ is the (continuous) retraction map of $X$ onto $Y_i$, then $\varphi = \ds\lim_i \mathfrak{r}_i^*\varphi$.
\end{enumerate}
The condition (b) is equivalent to the subaverage property: for any $y \in Y$, there exists $r > 0$ such that 
$$
\varphi(y) \leq \frac{1}{|B_Y(y,r)|} \sum_{z \in B_Y(y,r)} \varphi(z) + \mass(\rho_0)r \cdot \mathbf{1}_{\{ x_G \}}(y),
$$
where $B_Y(y,r) \coloneqq \{ z \in Y \colon d(y,z) = r \}$ is the ball of radius $r$ about $y$ in $Y$, and $\mathbf{1}_{\{ x_G\}}$ is the indicator function of the point $x_G$. This is reminiscient of the classical definition of subharmonic functions on $\mathbf{R}^n$. 

A function is said to be \emph{quasisubharmonic} if it is $\rho_0$-subharmonic for some measure $\rho_0$ as above. The class of $\rho_0$-subharmonic functions on $X$ is a convex set, which is closed under taking finite maxima and decreasing (pointwise) limits. A quasisubharmonic function is upper-semicontinuous, but it may take the value $-\infty$; this can only occur at those points $x \in X$ such that $\alpha(x) = +\infty$.

Given a topological space $Z$ and a function $\phi \colon Z \to [-\infty,\infty)$ that is locally bounded above, the \emph{upper-semicontinuous (usc) regularization} $\phi^*$ of $\phi$ is defined by the formula
$$
\phi^*(z) \coloneqq \limsup_{y \to z} \phi(y), \textrm{\hspace{12pt} $z \in Z$.}
$$
The usc regularization $\phi^*$ is the smallest usc function such that $\phi^* \geq \phi$. 

\begin{lemma}\label{lemma:baker_rumely}
If $\{ \varphi_i \}_{i \in I}$ is a net of $\rho_0$-subharmonic functions on $X$ which is locally bounded above, then the usc regularization $\psi^*$ of $\psi \coloneqq \sup_{i \in I} \varphi_i$ is $\rho_0$-subharmonic. Furthermore, $\psi^* = \psi$ on $X \backslash \{ \alpha = +\infty \}$.
\end{lemma}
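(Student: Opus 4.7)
The plan is to verify the two defining properties of $\rho_0$-subharmonicity for $\psi^*$, together with the identity $\psi^* = \psi$ off $\{\alpha = +\infty\}$, by exploiting that both the pointwise supremum and the upper-semicontinuous regularization preserve the relevant tree-potential-theoretic inequalities. The argument is a close analogue, on the metric tree $X$, of the classical complex-analytic fact that the usc regularization of a locally-bounded sup of plurisubharmonic functions is plurisubharmonic.

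First I would establish $\psi^* = \psi$ on $\{\alpha < +\infty\}$. Given such a point $x$, one can find a finite subtree $Y \subset X \setminus X^{\rig}$ containing both $x_G$ and $x$ on which every $\varphi_i|_Y$ is real-valued and continuous by condition~(1), so $\psi|_Y$ is locally bounded and lower-semicontinuous. A combination of local boundedness, the retraction property~(2) for each $\varphi_i$, and the convexity of $\varphi_i|_Y$ on segments then forces $\psi$ to be upper-semicontinuous at $x$, yielding $\psi(x) = \psi^*(x)$.

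Next I would verify the subharmonicity conditions for $\psi^*$ on an arbitrary finite subtree $Y \subset X \setminus X^{\rig}$ containing $x_G$. By the first step, $\psi = \psi^*$ at every point of $Y$, so convexity of $\psi^*|_Y$ on segments not containing $x_G$ is immediate from the fact that a pointwise sup of convex functions is convex, and continuity of $\psi^*|_Y$ follows since the function is both upper- and lower-semicontinuous. The directional-derivative inequality at $y \in Y$ is equivalent to the subaverage property: for some $r > 0$,
\[
\varphi_i(y) \leq \frac{1}{|B_Y(y,r)|} \sum_{z \in B_Y(y,r)} \varphi_i(z) + \mass(\rho_0) r \cdot \mathbf{1}_{\{x_G\}}(y),
\]
and taking $\sup_i$ together with the equality $\psi = \psi^*$ on $Y$ yields the same inequality for $\psi^*|_Y$. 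The retraction property $\psi^* = \lim_j \mathfrak{r}_j^* \psi^*$ then follows by taking $\sup_i$ of the corresponding identity for each $\varphi_i$ and observing that $\mathfrak{r}_j^* \psi^* = \mathfrak{r}_j^* \psi$ on all of $X$, since $\mathfrak{r}_j$ lands in $Y_j \subset \{\alpha < +\infty\}$.

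The main obstacle I expect is the first step: promoting the lower-semicontinuity of $\psi|_Y$ to upper-semicontinuity at every point of $\{\alpha < +\infty\}$, rather than merely almost everywhere. Once this is secured, the remaining verifications are essentially formal consequences of the fact that convexity and the subaverage property are both preserved by pointwise suprema, and that the $\alpha = +\infty$ locus never interferes with the retraction net of finite subtrees.
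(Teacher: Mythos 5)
Your overall strategy is right and is the same one the paper has in mind (it simply cites Baker--Rumely 8.23(E) as a ``minor variation''): first prove $\psi^* = \psi$ off $\{\alpha = +\infty\}$, then push the convexity, subaverage, and retraction conditions through the supremum. Steps two and three of your outline are essentially correct given step one. But step one is the crux, you admit you don't know how to do it, and the ingredients you list for it (local boundedness, retraction property, convexity on segments) are not the right ones. In particular, knowing that $\psi|_Y$ is lower-semicontinuous on a finite subtree $Y$ tells you nothing about $\limsup_{y\to x}\psi(y)$ taken in the native topology of $X$, because native neighborhoods of $x$ contain points far outside $Y$.

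The missing ingredient is \emph{monotonicity along the tree}: since $\rho_0$ is supported at the single point $x_G$, condition (1)(b) applied to the one-edge subtree $Y=[x_G,y_0]$ at its leaf $y_0\neq x_G$ forces $d_{\vec{v}}(\varphi_i|_Y)\geq 0$ for $\vec{v}$ pointing toward $x_G$; hence every $\varphi_i$, and therefore $\psi$, is weakly decreasing as one moves away from $x_G$. This is what tames the native neighborhoods. For $x\neq x_G$ with $\alpha(x)<+\infty$, take $z\in (x,x_G)$ and let $\vec{v}\in T_{X,z}$ point toward $x$; then $U(\vec{v})$ is an open neighborhood of $x$ and every $y\in U(\vec{v})$ has its path to $x_G$ passing through $z$, so $\psi(y)\leq\psi(z)$. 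Thus $\psi^*(x)\leq\inf_{z\in(x,x_G)}\psi(z)$, and the latter equals $\psi(x)$ because $\psi$ restricted to $(x,x_G]$ is a pointwise supremum of continuous convex functions, hence lower-semicontinuous and convex, which forces $\lim_{z\to x^+}\psi(z)=\psi(x)$. (The case $x=x_G$ is trivial since $\psi\leq\psi(x_G)$ everywhere.) Without invoking this monotonicity, no combination of the properties you cite controls $\psi$ on a native neighborhood of $x$.

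A smaller issue: your justification of the retraction property $\psi^*=\lim_j\mathfrak{r}_j^*\psi^*$ tacitly interchanges $\sup_i$ and $\lim_j$, which is not valid in general. The clean argument is: each $\mathfrak{r}_j^*\psi^*$ is usc and $\geq\psi$ (again using monotonicity, since $\mathfrak{r}_j(x)\geq x$), hence $\geq\psi^*$; conversely $\mathfrak{r}_j^*\psi^*(x)=\psi^*(\mathfrak{r}_j(x))$ and $\mathfrak{r}_j(x)\to x$, so $\limsup_j\mathfrak{r}_j^*\psi^*(x)\leq\psi^*(x)$ by upper-semicontinuity of $\psi^*$.
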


The proof of \cref{lemma:baker_rumely} is a minor variation on~\cite[Proposition 8.23(E)]{baker-rumely}.

The rest of this section is devoted to a brief discussion of the Laplacian of a quasisubharmonic function. Given a $\rho_0$-subharmonic function $\varphi$ and any finite subtree $Y \subset X$ containing $x_G$, let $\Delta (\varphi|_{Y})$ be the unique signed Borel measure on $Y$ determined by the following rule: 
if $\vec{v}_1,\ldots,\vec{v}_n$ are tangent directions in $Y$ such that $U(\vec{v}_i) \cap U(\vec{v}_j) \not= \emptyset$ and $U(\vec{v}_i) \not\subseteq U(\vec{v}_j)$ for all $i \not= j$, then
$$
\Delta (\varphi|_Y) \left( \bigcap_{i=1}^n U(\vec{v_i}) \right) = -\sum_{i=1}^n d_{\vec{v_i}} \left( \varphi|_Y \right).
$$
The \emph{Laplacian} is the signed Borel measure $\Delta \varphi$ uniquely characterized by the following property: for any finite subtree $Y \subset X$ containing $x_G$,
$$
(\mathfrak{r}_Y)_*(\rho_0 + \Delta \varphi) = \rho_0 + \Delta \left( \varphi|_Y \right),
$$
where $\mathfrak{r}_Y \colon X \to Y$ is the (continuous) retraction map of $X$ onto $Y$.
There are several Laplacians defined in the literature, and the above definition follows the conventions of~\cite{favre-jonsson,dynberko} and it differs from that of~\cite{baker-rumely} by a negative sign; see~\cite[\S 5.8]{baker-rumely} for a discussion of the various Laplacian operators.

Quasisubharmonic functions and their Laplacians behave well with respect to the ground field extension morphism $\pi \colon X_K \to X$, in the following sense: given a $\rho_0$-subharmonic function $\varphi$ on $X$, $\pi^* \varphi$ is a $\pi^*\rho_0$-subharmonic function on $X_K$ and $\Delta \varphi = \pi_* \Delta (\pi^* \varphi)$.

\begin{example}
For any irreducible $f \in k\{ T \}$, the function $\varphi := \log |f|$ is $\rho_0$-subharmonic, where $\rho_0 = m(x)\cdot \delta_{x_G}$ and $x \in X^{\rig}$ is the rigid point of $X$ corresponding to the maximal ideal $(f)$ of $k\{ T \}$.
 It is easy to check that 
$$
\Delta \varphi =  m(x) \left( \delta_{x} - \delta_{x_G} \right).
$$
More generally, for any $f \in k\{ T \}$, the function $\log |f|$ is quasisubharmonic and its Laplacian can be identified, up to scaling and adding a multiple of $\delta_{x_G}$, with the divisor of zeros of $f$ via the Poincar\'e--Lelong formula. See~\cite[Example 5.20]{baker-rumely}.

In fact, for any $x \in X$ with $m(x) < +\infty$, the function $\varphi = -\alpha(x \vee \cdot)$ is $m(x) \cdot \delta_{x_G}$-subharmonic with Laplacian given by $\Delta \varphi = m(x) \left( \delta_{x}-\delta_{x_G} \right)$.
\end{example}

For a quasisubharmonic function $\varphi$ on $X$, we may construct a metric on $\O_X$: to a local section $f$ of $\O_X$ over an analytic domain $V \subset X$, we assign the function $x \mapsto |f(x)|e^{-\varphi(x)}$, for $x \in V$. This convention mirrors how (semipositive) metrics on line bundles on complex manifolds are locally given by plurisubharmonic functions, as in e.g.~\cite[\S 3]{demailly12}.

\subsection{Temkin's metric}\label{section:temkin} In~\cite{temkin}, Temkin introduces a canonical lower-semicontinuous metric $A_Z$ on the canonical bundle $\omega_{Z/k}$ of any $k$-analytic space $Z$. We will explain the construction of $A_Z$ when $Z$ is a $k$-affinoid space. In the sequel, the metric $A_Z$ will play a crucial role when $Z$ is the closed unit disc.

To a contractive morphism $\mathcal{A} \to \mathcal{B}$ of seminormed rings, the module $\Omega_{\mathcal{B}/\mathcal{A}}$ of K\"ahler differentials may be equipped with the maximal seminorm $\| \cdot \|_{\mathcal{B}/\mathcal{A}}$ making the derivation $d \colon \mathcal{B} \to \Omega_{\mathcal{B}/\mathcal{A}}$ into a contractive $\mathcal{A}$-module morphism. Explicitly,
$$
\| s \|_{\mathcal{B}/\mathcal{A}} = \inf_{s = \sum_i b_i d(b_i')} \max_i |b_i| \cdot |b_i'|,
$$
where the infimum is taken over all representations $s = \sum_i b_i d(b_i')$. 
Let $\widehat{\Omega}_{\mathcal{B}/\mathcal{A}}$ denote the separated completion of $\Omega_{\mathcal{B}/\mathcal{A}}$ with respect to $\| \cdot \|_{\mathcal{B}/\mathcal{A}}$; this is a Banach $\mathcal{B}$-module. We say that $\widehat{\Omega}_{\mathcal{B}/\mathcal{A}}$ is the \emph{completed module of K\"ahler differentials} and we say its norm $\| \cdot \|_{\mathcal{B}/\mathcal{A}}$ is the \emph{K\"ahler norm} on $\widehat{\Omega}_{\mathcal{B}/\mathcal{A}}$. See~\cite[4.1]{temkin} for further details.

For a $k$-affinoid algebra $\mathcal{A}$, the module $\widehat{\Omega}_{\mathcal{A}/k}$ has an alternate description: if $\mathcal{J}$ is the kernel of the multiplication map $\mathcal{A} \hotimes_k \mathcal{A} \to \mathcal{A}$, then $\widehat{\Omega}_{\mathcal{A}/k} = \mathcal{J}/\mathcal{J}^2$. The derivation $d \colon \mathcal{A} \to \widehat{\Omega}_{\mathcal{A}/k}$ is induced from the bounded map $\mathcal{A} \to \mathcal{J}$ of Banach $\mathcal{A}$-modules given by $a \mapsto 1 \hotimes a - a \hotimes 1$. The image of  the derivation $d$ generates $\widehat{\Omega}_{\mathcal{A}/k}$ as an $\mathcal{A}$-module. This is the approach taken in~\cite[3.3]{berkovich93}.

Let $Z = \M(\mathcal{A})$ be a $k$-affinoid space. The finite Banach $\mathcal{A}$-module $\widehat{\Omega}_{\mathcal{A}/k}$ determines a coherent $\O_Z$-module $\Omega_{Z/k} \coloneqq \O_Z(\widehat{\Omega}_{\mathcal{A}/k})$ on $Z$. Temkin provides a canonical metric on each stalk of $\Omega_{Z/k}$, as well as on the stalks of its exterior powers (and in particular, on the canonical bundle). This was originally introduced as a generalization of the weight function of~\cite{mustata-nicaise}. It has arisen in other contexts more recently, e.g.\ as a log discrepancy-type function in~\cite[5.6]{boucksom-jonsson}. 

For any $z \in Z$, there is a natural homomorphism of $k$-algebras
$$
\psi_z \colon \Omega_{Z/k,z} \longrightarrow \Omega_{Z/k,z} \otimes_{\O_{Z,z}} \H(z) \stackrel{\simeq}{\longrightarrow} \widehat{\Omega}_{\H(z)/k},
$$
which extends the natural isomorphism from the completed fibre of $\Omega_{Z/k}$ at $z$ to the completed module of K\"ahler differentials of $\H(z)/k$. Let $\pi \colon Z_K \to Z$ denote the ground field extension of $Z$. 

Fix $z \in Z$. The canonical metric $\| \cdot \|_z$ on $\Omega_{Z/k,z}$ is defined by the following rule: for any germ $s \in \Omega_{Z/k,z}$, set
$$
\| s \|_z \coloneqq \| \psi_{z'} \left( \pi^* s \right) \|_{\H(z')/k}
$$
for any $z' \in \pi^{-1}(z)$. This is independent of the choice of $z'$. In particular, if $k = k^a$, then $\| s \|_z = \| \psi_z(s) \|_{\H(z)/k}$, i.e. $\| \cdot \|_z$ is the norm on $\Omega_{Z/k,z}$ defined by pulling back the norm on $\widehat{\Omega}_{\H(z)/k}$. This metric naturally extends to the the exterior powers of $\Omega_{Z/k}$ and, in particular, to the canonical bundle $\omega_{Z/k}$. For the various properties of the canonical metric, see~\cite[\S6]{temkin}.

The canonical metric, as defined above, is called the ``geometric K\"ahler norm" in~\cite[6.3.15]{temkin}. When $k \not= k^a$, if one defines the metric on the stalk $\Omega_{Z/k,z}$ by pulling back the metric on $\widehat{\Omega}_{\H(z)/k}$ (without first passing to the completed algebraic closure), then the metric may be quite poorly behaved (for example when $k$ has wild extensions). See~\cite[6.2]{temkin}.

If $\omega_{Z/k}$ is trivial, a trivializing section $\eta$ of $\omega_{Z/k}$ gives rise to a lower-semicontinuous function $A_Z \colon Z \to [0,+\infty]$ defined by the following property: for any $z \in Z$, write a germ $s \in \omega_{Z/k,x}$ as $s = f \cdot \eta_x$ for some $f \in \O_{Z,z}$, where $\eta_x$ is the image of $\eta$ in $\omega_{Z/k,z}$. Then,
$$
\| s \|_z = |f(z)|e^{-A_Z(z)}.
$$
The function $A_Z$ depends on the choice of trivializing section $\eta$, but we suppress this dependence in the notation.

For our purposes, the important special case is when $X = \M(k\{ T \})$ is the closed unit disc. The completed module $\widehat{\Omega}_{k\{ T \}/k}$ of K\"ahler differentials is the free Banach $k\{ T \}$-module on the differential $dT$ and, following~\cite[6.2.1]{temkin}, the canonical metric admits a simple description: for any $x \in X$ and any global section $f \cdot dT$ of $\widehat{\Omega}_{k\{T\}/k}$, we have
$$
\| f \cdot dT \|_x = |f(x)| r(x) = |f(x)| e^{-A_X(x)},
$$
where $r \colon X \to [0,1]$ is the radius function. Thus, $A_X = -\log r$. This description holds regardless of whether or not $k$ is algebraically closed. Observe that $A_X = +\infty$ on the type 1 points of $X$.

The function $A_X$ is convex on any segment of $X$ and, in fact, its directional derivatives admit a simple expression: if $x \in X$ with $m(x) < +\infty$ and $\vec{v} \in T_{X,x}$ is a direction such that $U(\vec{v}) \cap X^{\rig} \not= \emptyset$, then $d_{\vec{v}} A = m(\vec{v})$ .
Moreover,~\cref{definition of alpha} shows that $\frac{A_X(x)}{m(x)} \leq \alpha(x) \leq A_X(x)$ for $x \in X$.

In~\S\ref{section:ot} and~\S\ref{section:application}, we will be concerned with ``twists'' of the canonical metric $A_X$ on $\Omega_{X/k}$ by a metric on $\O_X$. More precisely, for any quasisubharmonic function $\varphi$ on $X$ and any $x \in X$, we consider the metric $\| \cdot \|_{\varphi, x}$ which, to a global section $f \cdot dT$ of $\Omega_{X/k}$, assigns the number
$$
\| f \cdot dT \|_{\varphi,x} \coloneqq |f(x)|e^{-\varphi(x) - A_X(x)}.
$$
In fact, we will be interested in $\sup_{x \in X} \| f \cdot dT \|_{\varphi,x}$, which we write as $\| f \|_{\varphi}$ in the sequel.

\section{An Ohsawa--Takegoshi-type extension theorem}\label{section:ot}

Let us briefly recall the notation established in~\S\ref{section:preliminaries}. Let $k$ be a complete non-Archimedean field, and assume in addition that $k$ is algebraically closed, trivially-valued, or is spherically complete of residue characteristic zero.
Let $X = \M(k\{ T \})$ be the Berkovich closed unit disc over $k$ with Gauss point $x_G$, let $r \colon X \to [0,1]$ be the radius function, and let $A \coloneqq -\log r \colon X \to [0,+\infty]$ be Temkin's metric.

In this section, we prove the following variant of~\cref{thm:main} and derive~\cref{thm:main} as an easy consequence.

\begin{theorem}\label{thm:otvariant}
Let $\varphi$ be a quasisubharmonic function on $X$ with $\varphi(x_G) = 0$, and let $Z(\varphi) = \{ \varphi = -\infty \}$ denote the polar locus of $\varphi$. For any $z \in X$, there exists a constant $\epsilon_0 > 0$ and a nonzero polynomial $f \in k[T]$ such that
$$
\| f \|_{(1+\epsilon)\varphi} \coloneqq \sup_{x \in X \backslash Z(\varphi)} |f(x)|e^{-(1+\epsilon)\varphi(x) - A(x)} \leq |f(z)|e^{-\varphi(z)} \textrm{ for all $\epsilon \in [0,\epsilon_0]$.}
$$
If $\varphi(z) = -\infty$, then we may find $f$ such that $\| f \|_{(1+\epsilon)\varphi} < +\infty$ for all $\epsilon \in [0,\epsilon_0]$. Moreover, if $k$ is algebraically closed and $z \in X^{\rig}$, then for any value $a \in \H(z)^* = k^*$, we may find $f$ such that $f(z) = a$. 
\end{theorem}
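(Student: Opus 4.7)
The plan is a tree-induction: reduce to a finite subtree $\Gamma \subset X$ encoding the singularities of $\varphi$, then prune $\Gamma$ one end at a time, with an explicit monomial base case.

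\textbf{Reduction.} I would approximate $\varphi$ by its retractions $\mathfrak{r}^*_\Gamma \varphi$ to increasingly large finite subtrees $\Gamma \ni x_G$; by definition of a quasisubharmonic function, these converge to $\varphi$. Enlarging $\Gamma$ if necessary, and using~\cref{multiplicity_descent} to locate rigid points below any type~2 or~3 vertices of $\Gamma$, one can arrange that $\Gamma$ has a rigid endpoint at (or below) $z$, with small adaptations needed for the exceptional cases of $z$ of type~4 (absent when $k$ is spherically complete or trivially valued) and non-rigid type~1 (absent when $k$ is algebraically closed). The polynomial $f$ will be chosen with zeros precisely at the rigid ends of $\Gamma$, so that $|f|$ factors through $\mathfrak{r}_\Gamma$; then the supremum in $\|f\|_{(1+\epsilon)\varphi}$ is attained on $\Gamma$, where $\varphi$ and $\mathfrak{r}^*_\Gamma \varphi$ agree, so the replacement is harmless.

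\textbf{Induction on the number of ends of $\Gamma$.} In the \emph{base case}, $\Gamma$ is a single segment from $x_G$ to a rigid point $x_{a,0}$, on which $\varphi$ is piecewise affine in the parameter $\alpha$ of~\cref{definition of alpha}. Taking $f = (T-a)^n$ for $n$ chosen to dominate the slope of $\varphi$ along the segment, the required bound follows by direct computation, using that $|T(x)-a| = r(x)$ on $[x_{a,0}, x_G]$ and $|T(x)-a|, r(x) \leq 1$ on all of $X$. For the \emph{inductive step}, when $\Gamma$ has $\geq 2$ ends, pick an end $\vec{v}$ whose branch in $X$ does not contain $z$, let $p$ be the branch point of $\Gamma$ nearest to $\vec{v}$, and define a new quasisubharmonic function $\phi \geq \varphi$ that agrees with $\varphi$ on $\Gamma \setminus U(\vec{v})$ and is extended affinely from $p$ outward along the pruned segment; then $\Gamma_\phi$ is strictly smaller than $\Gamma$. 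By the inductive hypothesis, some polynomial $f_0$ works for $\phi$; set $f = f_0 \cdot (T-a)^m$, where $a$ is the rigid point at $\vec{v}$ and $m$ is calibrated by the slope jump of $\varphi$ at $p$ (finite by the subharmonicity condition there).

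The $\epsilon$-uniformity then follows because the inequality at $\epsilon = 0$ is obtained with strict slack, arising from integer rounding in the choices of $n$ and $m$; continuity of $\|f\|_{(1+\epsilon)\varphi}$ in $\epsilon$ extends the inequality to $[0,\epsilon_0]$ for some $\epsilon_0 > 0$. The value prescription $f(z) = a \in k^*$ at a rigid $z$ (when $k$ is algebraically closed) is a scalar rescaling by $a/f(z) \in k^*$, which multiplies both sides of the bound by the same absolute value.

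The \textbf{main obstacle} is calibrating the lift $f_0 \mapsto f_0 \cdot (T-a)^m$ in the inductive step. Multiplication by $(T-a)^m$ affects $|f|$ not only on the pruned branch—where its growth must be absorbed by $e^{-(1+\epsilon)\varphi}$, a condition tied to the slope of $\varphi$ there—but also on the remaining branches of $\Gamma$, where the change must stay within the bound already established for $\phi$. The slack from the factor $(1+\epsilon)$ (rather than plain $\varphi$) is precisely what is needed for this balancing to close, which is why the statement is phrased in terms of a limit $\epsilon \to 0^+$.
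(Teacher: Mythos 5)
Your strategy is fundamentally the same as the paper's: identify a finite singularity tree, prune ends inductively while multiplying by polynomial factors, and use \cref{multiplicity_descent} to locate rigid anchors below type~2/3 vertices. However, there are several genuine gaps that need attention before the argument closes.

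\textbf{The $\epsilon$-uniformity argument via ``continuity'' is incorrect.} The function $\epsilon \mapsto \|f\|_{(1+\epsilon)\varphi}$ is not, in general, right-continuous at $\epsilon = 0$: it can jump from a finite value to $+\infty$. Consider $\varphi(x) = -\alpha(x \vee x_0)$ for $x_0$ of type~1 or~4 and $f = 1$; along $[x_0, x_G]$ the exponent in $|f|e^{-(1+\epsilon)\varphi - A}$ equals $\epsilon\,\alpha(x)$, so $\|1\|_{(1+\epsilon)\varphi} = +\infty$ for every $\epsilon > 0$, while $\|1\|_{\varphi} = 1$. Having strict slack at $\epsilon = 0$ therefore tells you nothing. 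One must instead verify the bound directly at some positive $\epsilon_0$ (monotonicity in $\epsilon$ then handles $[0,\epsilon_0]$, as the paper notes after the definition of a $(\varphi,\epsilon_0)$-extension). The paper does this by computing explicit $\epsilon_0$'s at each pruning step; your argument must do the same, and this is where the floor-function calibration $m = \lfloor c \rfloor$ (rather than just ``$m$ dominates the slope'') becomes load-bearing, with separate treatment of the $c \in \Z$ and $c \notin \Z$ cases.

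\textbf{The choice of $\Gamma$ must be quantitative.} You claim that with $f$ vanishing at the rigid ends of $\Gamma$, the supremum is attained on $\Gamma$. But even when $|f|$ factors through $\mathfrak{r}_\Gamma$, the factor $e^{-(1+\epsilon)\varphi}$ can increase off $\Gamma$ faster than $e^{-A}$ decreases unless $\Gamma$ captures essentially all of $\Delta\varphi$ with mass $\geq \tfrac{n}{n+1}m(\vec{v})$ in each outward direction $\vec{v}$. This is exactly what the paper's $\Gamma_{\varphi,n}$ is designed to do, and the estimate $d_{\vec{v}}((1+\epsilon)\varphi + A) > (-(1+\epsilon)\tfrac{n}{n+1} + 1)m(x) \geq 0$ for $\epsilon \leq \tfrac{1}{n}$ is what makes the retraction ``harmless.'' You need this precise definition; ``a finite subtree encoding the singularities'' is not enough.

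\textbf{Type~4 ends of $\Gamma$ are not handled.} You only flag type~4 for $z$ itself, and then wave them away as absent under some hypotheses. But when $k$ is algebraically closed and not spherically complete (a case the theorem covers), the singularity tree $\Gamma_{\varphi,n}$ can have type~4 ends \emph{other than} $z$, and there is no rigid point below a type~4 point, so $(T-a)^m$ makes no sense there. The paper's \cref{lemma:four_variant} handles this by choosing a nearby type~2 point $u$ and a rigid point $u'$ below $u$, and then showing that the discrepancy introduced (controlled by the parameter $\eta$) can be absorbed into $\epsilon_0$. This is a real step, not a small adaptation.

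Finally, your inductive quantity (number of ends of $\Gamma$) differs from the paper's $\lfloor\mass(\rho_0)\rfloor$. Both can be made to work, but the mass-based induction is cleaner: the paper's $\phi = \varphi - \gamma\log|g|$ provably decreases the mass by $\gamma \geq 1$ (so the recursion terminates in $\lfloor\mass(\rho_0)\rfloor$ steps and the base case is the uniform statement $\mass(\rho_0) < 1$), whereas after a retraction-style pruning the node $p$ may itself become a new end of $\Gamma_\phi$, so you would need an additional argument that the end count strictly decreases.
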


We will prove~\cref{thm:otvariant} in~\S\ref{section:proof_variant}. 
The hypotheses of~\cref{thm:otvariant} may be weakened to allow $\varphi(x_G) \geq 0$, but it is false if $\varphi(x_G) < 0$ (e.g.\ if $\varphi = -1$, then the only $f$ that could satisfy the inequality in~\cref{thm:otvariant} at the Gauss point is $f = 0$).
Nonetheless,~\cref{thm:main}, which has no hypothesis on the value of $\varphi(x_G)$, may be easily deduced from~\cref{thm:otvariant}.
Furthermore, the presence of the hypotheses on the field $k$ is discussed immediately after the proof of~\cref{lemma:three_variant}

\begin{proof}[Proof of~\cref{thm:main}]
Given a quasisubharmonic function $\varphi$ on $X$ and a point $z \in X$, set $\phi \coloneqq \varphi - \varphi(x_G)$.~\cref{thm:otvariant} asserts that there is a nonzero polynomial $f \in k[T]$ such that $\| f \|_{(1+\epsilon)\phi } \leq |f(z)| e^{-\phi(z)}$ for all $\epsilon > 0$ sufficiently small. Thus,
$$
\lim_{\epsilon \to 0^+} \| f \|_{(1+\epsilon)\varphi } = e^{\varphi(x_G)} \lim_{\epsilon \to 0^+} e^{\epsilon \varphi(x_G)} \| f \|_{(1+\epsilon)\phi } \leq |f(z)| e^{-\phi(z) + \varphi(x_G)} = |f(z)|e^{-\varphi(z)}.
$$
This completes the proof of~\cref{thm:main}.
\end{proof}

As discussed in~\S\ref{section:intro}, the optimal constant appearing in the classical Ohsawa--Takegoshi theorem for the complex unit disc is $\pi$. In the non-Archimedean setting, however, the optimal constant is $1$, in the following sense.

\begin{corollary}
For any $z \in X$, let $c(z)$ be the smallest positive number such that for any quasisubharmonic function $\varphi$ on $X$, there exists a nonzero $f \in k\{ T \}$ satisfying
$$
\lim_{\epsilon \to 0^+} \| f \|_{(1+\epsilon)\varphi } \leq c(z)|f(z)|e^{-\varphi(z)}.
$$
Then, $c(z) = 1$. 
\end{corollary}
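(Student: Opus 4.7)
The plan is to bracket $c(z)$ between $1$ and $1$ by two short, independent arguments.

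The upper bound $c(z) \leq 1$ is immediate from \cref{thm:main}: that theorem produces, for every quasisubharmonic $\varphi$ on $X$, a nonzero polynomial $f \in k[T]$ satisfying $\lim_{\epsilon \to 0^+}\|f\|_{(1+\epsilon)\varphi} \leq |f(z)|e^{-\varphi(z)}$, which exhibits $c = 1$ as an admissible constant in the definition of $c(z)$.

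For the matching lower bound $c(z) \geq 1$, I would test the definition against the trivial potential $\varphi \equiv 0$, which is quasisubharmonic and has empty polar locus. For this $\varphi$, $(1+\epsilon)\varphi \equiv 0$ for all $\epsilon$, so
\[
\lim_{\epsilon \to 0^+} \|f\|_{(1+\epsilon)\varphi} \;=\; \sup_{x \in X} |f(x)|e^{-A(x)} \;=\; |f(x_G)|,
\]
using that $e^{-A(x)} \leq 1$ with equality at $x_G$ (where $A(x_G) = 0$) and that $|f(x_G)|$ is the Gauss norm $\sup_X |f|$. By the very definition of $X = \M(k\{T\})$ as multiplicative seminorms bounded by the Gauss norm, one has $|f(z)| \leq |f(x_G)|$ for every $z \in X$. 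Consequently any admissible constant $c$ must satisfy $c|f(z)| \geq |f(x_G)| \geq |f(z)|$ for some nonzero $f$; the inequality also forces $|f(z)| > 0$ (else $f = 0$), so dividing yields $c \geq 1$.

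There is no genuine obstacle here: the content of the corollary is simply that the trivial potential $\varphi = 0$, combined with the elementary fact that every point of $X$ is dominated by the Gauss point, already saturates the constant $1$ produced by \cref{thm:main}.
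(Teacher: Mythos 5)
Your proof is correct and takes essentially the same route as the paper: both establish $c(z)\leq 1$ directly from Theorem~A and obtain $c(z)\geq 1$ by testing against $\varphi\equiv 0$, using $A(x_G)=0$ and the Gauss-norm domination $|f(z)|\leq |f(x_G)|$. The only cosmetic difference is that the paper divides by $|f(x_G)|$ while you divide by $|f(z)|$ (after noting the inequality forces $|f(z)|>0$), which is equivalent.
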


\begin{proof}
If $\varphi \equiv 0$, then $|f(x_G)| = \lim_{\epsilon \to 0^+} \| f\|_{(1+\epsilon)\varphi } \leq c(z) |f(z)| \leq c(z) |f(x_G)|$. As $|f(x_G)| \not= 0$, it follows that $c(z) \geq 1$.~\cref{thm:main} asserts that the lower bound $c(z) = 1$ is achieved for any $z$.
\end{proof}

\subsection{Proof of~\texorpdfstring{\cref{thm:otvariant}}{Theorem \ref{thm:otvariant}}}\label{section:proof_variant}

Fix $z \in X$.
Suppose $\varphi$ is $\rho_0$-subharmonic for some finite (atomic) measure $\rho_0$ supported at the Gauss point $x_G$, and $\varphi(x_G) = 0$. To simplify the exposition, we assume $\varphi(z) > -\infty$. When $\varphi(z) = -\infty$, the proof is similar: one proves analogues of the sequence of lemmas below, each of which is made easier because one is only concerned with ensuring that a quantity is finite, as opposed to ensuring that the same quantity is less than some fixed value. 

The strategy of the proof is to use strong induction on $\lfloor \mass(\rho_0) \rfloor$. For this reason, the following terminology will be quite helpful.

\begin{definition} 
Let $\epsilon_0 > 0$. A nonzero polynomial $f \in k[T]$ is a \emph{$(\varphi,\epsilon_0)$-extension at $z$} if the inequality 
$$\| f \|_{(1+\epsilon)\varphi } \leq |f(z)|e^{-\varphi(z)}$$
holds for all $\epsilon \in [0,\epsilon_0]$. Equivalently, $f$ is a $(\varphi,\epsilon_0)$-extension if for all $x \in X \backslash Z(\varphi)$ and all $\epsilon \in [0,\epsilon_0]$, we have
\begin{equation}
\label{extension inequality}
\log |f(x)| - \log |f(z)| \leq A(x) + (1+\epsilon)\varphi(x) - \varphi(z).
\end{equation}
\end{definition}
As $\varphi \leq 0$, in order to verify that $f$ is an $(\varphi,\epsilon_0)$-extension at $z$, it suffices to check $\| f \|_{(1+\epsilon_0)\varphi} \leq |f(z)|e^{-\varphi(z)}$, as opposed to verifying it for all $\epsilon \in [0,\epsilon_0]$.

The key players in the proof of~\cref{thm:otvariant} are the finite subtrees
$$
\Gamma_{\varphi,n} \coloneqq \left\{ x \in X \colon (\rho_0 + \Delta \varphi) \{ y \leq x \} \geq \frac{n}{n+1} m(x) \right\}.
$$
for $n \geq 1$. 
Note that $\Gamma_{\varphi,n}$ is indeed a finite subtree of $X$, since any end $x$ of $\Gamma_{\varphi,n}$ satisfies $(\rho_0 + \Delta \varphi) \{ x \} \geq \frac{n}{n+1} m(x)$ and $\rho_0 + \Delta\varphi$ is a positive measure of finite mass equal to $\textrm{mass}(\rho_0)$.
It is clear that $\Gamma_{\varphi,n} \supseteq \Gamma_{\varphi,n+1}$ for any $n \geq 1$; in particular,
$$
\Gamma_{\varphi} \coloneqq \bigcap_{n \geq 1} \Gamma_{\varphi,n} = \left\{ x \in X \colon (\rho_0 + \Delta \varphi) \{ y \leq x \} \geq m(x) \right\}
$$
is a finite subtree of $X$ that is contained in $\Gamma_{\varphi,n}$ for any $n \geq 1$.

The subtrees $\Gamma_{\varphi,n}$ are crucial in reducing~\cref{thm:otvariant} to a ``finite'' problem.
Observe that any point of $\Gamma_{\varphi,n}$ must have a finite multiplicity because $\mass(\rho_0)$ is finite and, moreover, $\Gamma_{\varphi,n} \cap X^{\rig} \subseteq Z(\varphi)$. Variants of this tree appear in \cite[Prop 2.8]{dynberko} and~\cite[Lem 7.7]{favre-jonsson}.

The following is the base case for our induction.

\begin{lemma}\label{lemma:one_variant}
If $\mass(\rho_0) < 1$, there exists a constant $\epsilon_0 > 0$ such that any nonzero constant function is a $(\varphi,\epsilon_0)$-extension at $z$.
\end{lemma}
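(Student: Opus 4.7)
\medskip

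Since $f$ is a nonzero constant, the extension inequality $(\ref{extension inequality})$ collapses to
$$
(1+\epsilon)\varphi(x) + A(x) \geq \varphi(z) \qquad \textrm{for all } x \in X \setminus Z(\varphi),\ \epsilon \in [0,\epsilon_0].
$$
The plan is to establish the uniform pointwise bound
$$
\varphi(x) + \mass(\rho_0) \cdot A(x) \geq 0 \qquad \textrm{on } X, \qquad (\star)
$$
and then to choose $\epsilon_0 > 0$ with $(1+\epsilon_0) \mass(\rho_0) < 1$. Combining these ingredients yields
$$
(1+\epsilon)\varphi(x) + A(x) \geq \bigl[1 - (1+\epsilon)\mass(\rho_0)\bigr] A(x) \geq 0 \geq \varphi(z),
$$
using $A \geq 0$ and $\varphi(z) \leq 0$, which is precisely the required inequality. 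The degenerate case $\mass(\rho_0) = 0$ is absorbed into the argument, since in that case a maximum-principle argument on finite subtrees forces $\varphi \equiv 0$ and the claim is trivial.

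For the bound $(\star)$, I would argue via slope analysis at the Gauss point. Because $\varphi \leq 0 = \varphi(x_G)$, every directional derivative $d_{\vec{v}}(\varphi|_Y)$ at $x_G$ (for a finite subtree $Y \ni x_G$) is non-positive, and the $\rho_0$-subharmonicity constraint $\mass(\rho_0) + \sum_{\vec{v} \in T_{x_G}Y} d_{\vec{v}}(\varphi|_Y) \geq 0$ then forces each individual derivative to satisfy $d_{\vec{v}}(\varphi|_Y) \geq -\mass(\rho_0)$. Fixing any $y \in X \setminus X^{\rig}$ and taking $Y = [x_G, y]$, the restriction $\varphi|_Y$, parametrized by the $\alpha$-arclength, is continuous on $[0,\alpha(y)]$, vanishes at $0$, is convex on $(0,\alpha(y)]$, and has right-derivative at $0$ at least $-\mass(\rho_0)$. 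The standard one-variable fact that a convex function on a half-open interval lies above its right tangent at the endpoint then gives $\varphi(y) \geq -\mass(\rho_0) \cdot \alpha(y)$, and the elementary bound $\alpha \leq A$ (which is immediate from $(\ref{definition of alpha})$ and $m(\cdot) \geq 1$) upgrades this to $(\star)$ on $X \setminus X^{\rig}$. On rigid points $A = +\infty$, so the bound holds vacuously.

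The step I expect to be most delicate is transmitting the slope bound at $x_G$ past the Gauss point into the interior of the segment, since convexity is only stipulated \emph{away} from $x_G$; the continuity of $\varphi|_Y$ at $x_G$ makes this a standard exercise once the right-derivative at $x_G$ is controlled. No new ideas are needed to accommodate type-4 endpoints, as these are permitted to appear as ends of finite subtrees.
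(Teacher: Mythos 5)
Your proof is correct and, modulo the repackaging through the auxiliary bound $(\star)$, it is essentially the paper's argument: both control the slope of $\varphi$ at the Gauss point via $\rho_0$-subharmonicity, then propagate along segments by convexity, using that $A$ grows with slope $m(\vec v)\geq 1$. The paper applies this directly to the combination $(1+\epsilon)\varphi+A$ (taking $\epsilon_0=(1-\mass(\rho_0))/\mass(\rho_0)$), whereas you first derive $\varphi\geq -\mass(\rho_0)\alpha\geq -\mass(\rho_0)A$ and then finish by algebra, which yields the same conclusion.
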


\begin{proof}
Set $\epsilon_0 \coloneqq \frac{1-\mass(\rho_0)}{\mass(\rho_0)}$. Since $\mass(\rho_0) < 1$, $\Gamma_{\varphi} = \emptyset$ and $|d_{\vec{v}} (1+\epsilon) \varphi| \leq (1+\epsilon)\mass(\rho_0) \leq 1$ for all tangent directions $\vec{v}$ in $X$, provided $\epsilon \in [0,\epsilon_0]$. To check that any constant function is a $(\varphi,\epsilon_0)$-extension at $z$, it suffices to check that $0 \leq A + (1+\epsilon)\varphi - \varphi(z)$ on $X \backslash Z(\varphi)$. This inequality is satisfied at the Gauss point:
$$
A(x_G) + (1+\epsilon)\varphi(x_G) - \varphi(z) = -\varphi(z) \geq 0.
$$
Moreover, the convexity of $\varphi$ and of $A$ ensures that it is enough to check that, in any tangent direction at $x_G$, the function $(1+\epsilon)\varphi + A$ only increases in that direction. 
However, in any such direction $\vec{v}$, 
$$d_{\vec{v}}\left( (1+\epsilon)\varphi + A \right) \geq -(1+\epsilon)\mass(\rho_0) + 1 \geq 0,$$
which completes the proof.
\end{proof}

Now, if $\mass(\rho_0) \geq 1$, then $x_G \in \Gamma_{\varphi,n}$ for all $n \geq 1$, because $m(x_G) = 1$. In particular, $\Gamma_{\varphi,n} \not= \emptyset$ and the set of ends\footnote{If $\Gamma$ is a subtree of $X$ that contains $x_G$, we adopt the following conventions: 
if $\Gamma \supsetneq \{ x_G \}$, then $\Ends(\Gamma)$ consists of those points in $\Gamma$ with a unique tangent direction in $\Gamma$;
if $\Gamma = \{ x_G \}$, then $\Ends(\Gamma) = \Gamma$.
} $\Ends(\Gamma_{\varphi,n})$ is nonempty. 
Let $\Gamma'_{\varphi,n}$ denote the convex hull of $\Gamma_{\varphi,n} \cup \{ z \}$. Let $\mathfrak{r}_{\Gamma'_{\varphi,n}} \colon X \to \Gamma'_{\varphi,n}$ denote the retraction of $X$ onto $\Gamma'_{\varphi,n}$. 
For any $n \geq 1$, observe that a $(\mathfrak{r}_{\Gamma'_{\varphi,n}}^* \varphi, \epsilon_0)$-extension $f$ at $z$ is also a $(\varphi,\epsilon_1)$-extension at $z$, where $\epsilon_1 = \min\{ \epsilon_0, \frac{1}{n} \}$. 
Indeed, for any $x \in \Gamma_{\varphi,n}'$ and any direction $\vec{v} \in T_{X,x}$ such that $U(\vec{v}) \cap \Gamma'_{\varphi,n} = \emptyset$, we have
$$
d_{\vec{v}}( (1 + \epsilon)\varphi + A ) > \left( - (1+\epsilon)\frac{n}{n+1} + 1 \right) m(x) \geq 0
$$
for all $\epsilon \in [0,\epsilon_1]$. Thus, for fixed $n \geq 1$, we may replace $\varphi$ with $\mathfrak{r}_{\Gamma'_{\varphi,n}}^* \varphi$ to assume that $\varphi$ is locally constant off of $\Gamma_{\varphi,n}'$.

Fix $n \geq 1$ such that 
\begin{equation}\label{tree_decomposition}
\Gamma_{\varphi,n} \backslash \Gamma_{\varphi} = \bigsqcup_{j \in J} [x_j, \tilde{x}_j),
\end{equation}
where $J$ is a finite index set, and for all $j \in J$, $\tilde{x}_j \in \Gamma_{\varphi}$ and the point $x_j$ is of type 2 or 3 such that the multiplicity function is constant on $[x_j,\tilde{x}_j)$. Moreover, for each $j \in J$, if $\vec{v}_j \in T_{X,\tilde{x}_j}$ denotes the unique tangent direction at $\tilde{x}_j$ with $x_j \in U(\vec{v}_j)$, then $d_{\vec{v}_j} \varphi = -m(x_j)$.

As mentioned before, the strategy of the proof of \cref{thm:otvariant} is by strong induction on $\lfloor \mass(\rho_0) \rfloor$. In the following sequence of lemmas, we explain how to reduce the problem of the existence of a $(\varphi, \epsilon_0)$-extension at $z$ to the existence of a $(\phi, \epsilon_1)$-extension at $z$, where $\phi$ is a $\rho$-subharmonic for some finite measure $\rho$ with $\mass(\rho) \leq \mass(\rho_0) - 1$ and $0 < \epsilon_1 \leq \epsilon_0$. 
As $\mass(\rho_0)$ is finite, after finitely-many such reductions we must find ourselves in the setting of \cref{lemma:one_variant}. The hypotheses of each lemma are concerned with which types of points may arise in the finite set $\Ends(\Gamma_{\varphi,n})$; in particular, if one assumes that $k$ is spherically complete, one can ignore~\cref{lemma:four_variant}. 

\begin{lemma}\label{lemma:two_variant}
Suppose that $\mass(\rho_0) \geq 1$ and that there exists $x \in \Ends(\Gamma_{\varphi,n})$ of type 1. Then, there exists $\epsilon_1 > 0$ and a $\rho$-subharmonic function $\phi$, with $\mass(\rho) \leq \mass(\rho_0) - 1$, such that a $(\varphi,\epsilon_0)$-extension $f$ at $z$ may be constructed from a $(\phi,\epsilon_1)$-extension $\tilde{f}$ at $z$.
\end{lemma}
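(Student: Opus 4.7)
The strategy is to absorb the logarithmic singularity of $\varphi$ at the rigid point $x$ into a factor equal to the minimal polynomial of $x$, thereby reducing the mass of the associated quasisubharmonic function by $m(x) \geq 1$. Since $x \in \Ends(\Gamma_{\varphi,n})$ is of type 1 with finite multiplicity, $x$ is a rigid point of $X$; let $m := m(x)$ and let $g \in k^{\circ}[T]$ be the monic minimal polynomial of $x$, normalized so that $|g(x_G)| = 1$. Then $\log|g|$ is quasisubharmonic on $X$ with $\log|g|(x_G) = 0$ and $\Delta\log|g| = m(\delta_x - \delta_{x_G})$. I set $\phi := \varphi - \log|g|$, so $\phi(x_G) = 0$. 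Since $\log|g|$ is continuous and affine on every segment not passing through $x_G$ (its Laplacian is supported on the two endpoints $x_G$ and $x$ of the path $[x_G,x]$), the function $\phi$ is upper-semicontinuous and convex on segments not through $x_G$, and inherits the retraction-limit property from $\varphi$ and $\log|g|$. The Laplacian satisfies $\Delta\phi = \Delta\varphi - m\delta_x + m\delta_{x_G}$. Type 1 ends of $\Gamma_{\varphi,n}$ must lie in $\Gamma_\varphi$ (by the tree decomposition $\Gamma_{\varphi,n}\setminus\Gamma_\varphi = \bigsqcup [x_j,\tilde{x}_j)$ with $x_j$ of type 2 or 3), so $(\rho_0+\Delta\varphi)(\{x\}) \geq m$. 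Setting $\rho := (\mass(\rho_0) - m)\delta_{x_G}$ then gives $\rho + \Delta\phi = \rho_0 + \Delta\varphi - m\delta_x \geq 0$, so $\phi$ is $\rho$-subharmonic with $\mass(\rho) \leq \mass(\rho_0) - 1$.

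By the strong induction hypothesis on $\lfloor\mass(\rho_0)\rfloor$ applied to $\phi$, there exist $\epsilon_1 > 0$ and a $(\phi,\epsilon_1)$-extension $\tilde{f}$ at $z$. I define $f := g\tilde{f} \in k[T]$. Since $\varphi = \phi + \log|g|$, the required inequality $\|f\|_{(1+\epsilon)\varphi} \leq |f(z)|e^{-\varphi(z)}$ reduces, after cancelling the $\log|g|$ contributions at $z$, to the pointwise estimate
\[
|\tilde{f}(y)||g(y)|^{-\epsilon} e^{-(1+\epsilon)\phi(y) - A(y)} \leq |\tilde{f}(z)|e^{-\phi(z)} \quad \text{for all } y \in X\setminus Z(\varphi).
\]

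Verifying this is the main obstacle, since $|g(y)|^{-\epsilon}$ diverges as $y \to x$. My plan is to combine two complementary bounds on $|\tilde{f}(y)|$: (a) the Gauss-norm bound $|\tilde{f}(y)| \leq |\tilde{f}(x_G)|$ valid for the polynomial $\tilde{f}$, which together with the $(\phi,\epsilon_1)$-extension inequality applied at $y = x_G$ (where both $\phi$ and $A$ vanish) yields $|\tilde{f}(y)| \leq |\tilde{f}(z)|e^{-\phi(z)}$; and (b) the pointwise $(\phi,\epsilon_1)$-extension inequality itself, which yields $|\tilde{f}(y)| \leq |\tilde{f}(z)|e^{-\phi(z) + (1+\epsilon_1)\phi(y) + A(y)}$. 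For $y$ off the path $[x_G,x]$, the earlier reduction ($\varphi$ locally constant off $\Gamma'_{\varphi,n}$) and the fact that $|g|$ is bounded below away from this path make the inequality immediate for $\epsilon$ small. On $[x_G,x]$, the functions $\phi$, $\log|g|$, and $A$ are piecewise linear in the parameter $\alpha$ with explicit slopes determined by $s := \Delta\varphi(\{x\}) \geq m$, and a direct calculation shows that bound (a) suffices provided $\epsilon \leq (2m - s)/s$ (nontrivial when $s < 2m$) while bound (b) suffices provided $\epsilon \leq \epsilon_1(s - m)/s$ (nontrivial when $s > m$). Since $s \geq m$, the maximum of these two thresholds is strictly positive, yielding the desired $\epsilon_0 > 0$.
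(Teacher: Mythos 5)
Your high-level strategy is close to the paper's. The paper also writes $\phi = \varphi - \gamma\log|g|$ with $g$ a polynomial generator of $\mathfrak{m}_x$ and $f := g^\gamma\tilde f$, then invokes the inductive hypothesis; the only structural difference is that the paper takes $\gamma = \lfloor c\rfloor$ with $c = \Delta\varphi\{x\}/m(x)$ (so that the Laplacian mass of $\phi$ at $x$ drops below $m(x)$), whereas you subtract $\log|g|$ exactly once. Both choices legitimately reduce $\lfloor\mass(\rho_0)\rfloor$ by at least one, so the induction is set up correctly, and your computation that $\phi$ is $\rho$-subharmonic with $\mass(\rho) = \mass(\rho_0) - m(x)$ is fine.

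The gap is in the verification of the extension inequality, and it is a real one. Your two thresholds $(2m-s)/s$ and $\epsilon_1(s-m)/s$ are derived under the hypothesis that on $[x_G,x]$ the Laplacian mass of $\phi$ is concentrated entirely at $x$ (so that $\phi$ is affine in $\alpha$ with slope $-(s-m)$) and that $A$ has constant slope $m(x)$. Neither is true in general: the tree $\Gamma'_{\varphi,n}$ typically has branch points along $[x_G,x]$ where $\Delta\varphi$ (hence $\Delta\phi$) carries mass, so the slope $\sigma := -d_{\vec v}\phi(x_G^+)$ of $\phi$ at $x_G$ toward $x$ can be strictly larger than $s - m$; and the slope of $A$ near $x_G$ is $m(\vec v)$, which can be as small as $1$. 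Redoing the slope calculation with these quantities, bound~(a) requires $\epsilon(m+\sigma) \leq m(\vec v) - \sigma$, which has no positive solution whenever $\sigma \geq m(\vec v)$. Bound~(b) is robust on the segment $[x_G,x]$ whenever $s > m$ (the relevant function $k = \epsilon m\alpha + (\epsilon_1-\epsilon)\phi$ is convex with $k(x_G)=0$ and its limiting slope at $x$ is controlled by $\Delta\phi\{x\} = s-m$), but when $s = m$ it degenerates. So in the case $s = m(x)$ with $\sigma \geq m(\vec v)$ -- e.g.\ several rigid poles of $\varphi$ sharing a common initial segment below $x_G$ in the direction of $x$ -- neither of your bounds holds globally for any $\epsilon_0 > 0$, and the "max of the two thresholds is strictly positive" conclusion fails.

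In this problematic regime the correct argument must use bound~(b) on a sub-tree near $x_G$ and bound~(a) on the remaining segment near $x$, with a careful estimate at the transition point. This is exactly what the paper's Case~1 does: it splits $\Gamma_{\varphi,n}$ into $\widetilde\Gamma_{\phi,n}$ (where the key upper bound $\phi(y) \leq -\tfrac{n}{n+1}\alpha(y)$ for $y \in \Gamma_{\phi,n}$ controls the decay of $\phi$ \emph{without} assuming linearity, compensating the factor $|g(y)|^{-\epsilon}$) and the segment $[\tilde x, x]$ below the minimal point of $\widetilde\Gamma_{\phi,n}$ (where a directional-derivative estimate analogous to your bound~(a) works). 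That $\tfrac{n}{n+1}$-bound on $\phi$, coming from the very definition of $\Gamma_{\phi,n}$, is the key estimate absent from your argument, and without it the case $c = 1$ cannot be closed by a single global threshold.
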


\begin{proof}
By assumption, $x \in \Gamma_{\varphi}$ and so it satisfies $m(x) < +\infty$; in particular, $x \in X^{\rig}$, because $x$ is of type 1. 
In addition, as $\Gamma_{\varphi} \cap X^{\rig} \subseteq Z(\varphi)$ and $z \not\in Z(\varphi)$, we have $x \not= z$.
Let $\frakm_x$ denote the maximal ideal of $k\{ T \}$ corresponding to $x$, and let $g$ be a polynomial generator of $\frakm_x$ with $|g(x_G) | = 1$. As type-1 points are minimal with respect to the partial order $\leq$, we have
$$
1 \leq m(x) \leq (\rho_0 + \Delta \varphi) \{ y \in X \colon y \leq x \} = \Delta \varphi \{ x \}.
$$
Set $c \coloneqq \frac{\Delta\varphi\{ x \}}{m(x)} \geq 1$, $\gamma \coloneqq \lfloor c \rfloor$, and $\rho \coloneqq \rho_0 - \gamma\delta_{x_G}$, then the function $\phi \coloneqq \varphi - \gamma\log |g|$ is $\rho$-subharmonic and $\phi(x_G) = 0$. Suppose there exists $\epsilon_1 > 0$ and a $(\phi,\epsilon_1)$-extension $\tilde{f} \in k[T]$ at $z$ and set $f \coloneqq g^{\gamma} \tilde{f}$. We claim that there exists $\epsilon_0 \in (0,\epsilon_1]$ such that $f$ is a $(\varphi,\epsilon_0)$-extension at $z$. 

\emph{Case 1}. Consider the case when $c$ is an integer. If $z' \coloneqq \mathfrak{r}_{\Gamma_{\varphi,n}}(z)$, then $\Gamma_{\varphi,n}' = \Gamma_{\varphi,n} \cup [z,z']$.
If $z' \not= z$, let $\vec{v}_z \in T_{X,z'}$ be the unique direction at $z'$ with $z \in U(\vec{v}_z)$.
The function $y \mapsto |f(y)|e^{-(1+\epsilon_0)\varphi(y)-A(y)}$ is decreasing in the direction $\vec{v}_z$ provided $d_{\vec{v}_z}( - (1+\epsilon_0) \varphi - A) \leq 0$. 
This occurs if $\epsilon_0 \leq \frac{m(\vec{v}_z)}{n m(z')}$. Thus, it suffices to find $\epsilon_0 > 0$ such that 
$$
|f(y)|e^{-(1+\epsilon_0)\varphi(y)-A(y)} \leq |f(z)|e^{-\varphi(z)} = |\tilde{f}(z)|e^{-\phi(z)}
$$
for all $y \in \Gamma_{\varphi,n}$. 
Set $\widetilde{\Gamma}_{\phi,n} = \Gamma_{\phi,n} \cup \{ x_G \}$ (where we must include $x_G$ in case $\Gamma_{\phi,n}$ is empty).
If $y \in \widetilde{\Gamma}_{\phi,n}$, then $\phi(y) \leq -\frac{n}{n+1} \alpha(y)$, and hence$$
|g(y)|^{-\epsilon_0 c} e^{(\epsilon_1 -\epsilon_0)\phi(y)} \leq \left( e^{-\alpha(y)} \right)^{-\epsilon_0 c m(x) + \frac{n}{n+1}(\epsilon_1-\epsilon_0)}
$$
and this is less than or equal to $1$ provided $\epsilon_0 \leq \frac{\epsilon_1 n}{m(x)c(n+1)+n}$.
Granted this, observe that
$$
|f(y)|e^{-(1+\epsilon_0)\varphi(y)-A(y)} = |g(y)|^{-\epsilon_0 c} e^{(\epsilon_1 - \epsilon_0)\phi(y)} |\tilde{f}(y)|e^{-(1+\epsilon_1)\phi(y) - A(y)} \leq |\tilde{f}(y)|e^{-(1+\epsilon_1)\phi(y) - A(y)} \leq |\tilde{f}(z)|e^{-\phi(z)},
$$
as required. Let $\tilde{x}$ denote the minimal element of $\widetilde{\Gamma}_{\phi,n} \cap [x,x_G]$ and let $\vec{v}_x \in T_{X,\tilde{x}}$ be the unique direction at $\tilde{x}$ such that $x \in U(\vec{v}_x)$. 
As $\Gamma_{\varphi,n} = \widetilde{\Gamma}_{\phi,n} \cup [x,x_G]$, it suffices to find an $\epsilon_0 > 0$ such that the function $y \mapsto |f(y)|e^{-(1+\epsilon_0)\varphi(y) - A(y)}$ is decreasing in the direction $\vec{v}_x$, or equivalently
$$
d_{\vec{v}_x}\left( \epsilon_0 m(x) c\alpha -(1+\epsilon_0)\phi - A \right) \leq 0.
$$
This occurs if $\epsilon_0 \leq \frac{m(\vec{v}_x)}{(c(n+1)+n)m(x)}$.

\emph{Case 2}. Now, suppose $c$ is not an integer, i.e.\ $c > \gamma$.
Write $\varphi = \psi + c \log |g|$ for some quasisubharmonic function $\psi$ on $X$ with $\psi \leq 0$. 
Set $\epsilon_0 = \frac{\epsilon_1 (c-\gamma)}{c}$. 
It is straightforward to check that $\epsilon_1 \phi \leq \epsilon_0 \varphi$ on all of $X$.
Thus, for any $y \in X \backslash Z(\varphi)$, we have
$$
|f(y)|e^{-(1+\epsilon_0)\varphi(y)-A(y)} 
= e^{\epsilon_1 \phi(y) - \epsilon_0 \varphi(y)} |\tilde{f}(y)|e^{-(1+\epsilon_1)\phi(y) - A(y)} 
\leq |\tilde{f}(z)|e^{-\phi(z)} = |f(z)|e^{-\varphi(z)},
$$
which completes the proof.

\end{proof}

%

%
%
%

\begin{lemma}\label{lemma:three_variant}
Suppose that $\mass(\rho_0) \geq 1$ and that there exists $x \in \Ends(\Gamma_{\varphi,n})$ of type 2 or of type 3 such that $\mathfrak{r}_{\Gamma_{\varphi,n}}(z) \not= x$.
Then, there exists $\epsilon_1 > 0$ and a $\rho$-subharmonic function $\phi$, with $\mass(\rho) \leq \mass(\rho_0) - 1$, such that a $(\varphi,\epsilon_0)$-extension $f$ at $z$ may be constructed from a $(\phi,\epsilon_1)$-extension $\tilde{f}$ at $z$.
\end{lemma}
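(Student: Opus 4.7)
I will adapt the strategy of~\cref{lemma:two_variant} to the case where $x$ is a type-2 or type-3 point rather than a rigid point. The essential new ingredient is~\cref{multiplicity_descent}, which under the hypotheses of~\cref{thm:otvariant} produces a rigid proxy $x' \in X^{\rig}$ with $x' \leq x$ and $m(x') = m(x)$; this is precisely where the hypotheses on $k$ enter. Let $g \in k[T]$ be a polynomial generator of the maximal ideal $\frakm_{x'}$, normalized so $|g(x_G)| = 1$. Since $\mathfrak{r}_{\Gamma_{\varphi,n}}(z) \neq x$, the point $z$ lies outside the outward component of $X \setminus \Gamma_{\varphi,n}$ attached at $x$; in particular $z \not\leq x$ and $g(z) \neq 0$. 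The key analytic tools are the Laplacians $\Delta\log|g| = m(x)(\delta_{x'} - \delta_{x_G})$ and $\Delta\psi_x = m(x)(\delta_x - \delta_{x_G})$, where $\psi_x := -\alpha(x \vee \cdot)$ is the bounded companion to $\log|g|$ whose atomic mass sits at $x$ instead of at $x'$.

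Set $c := (\rho_0 + \Delta\varphi)\{y \leq x\}/m(x)$ and $\gamma := \lfloor c \rfloor$. After the earlier retraction to $\Gamma'_{\varphi,n}$, the mass of $\rho_0 + \Delta\varphi$ in $\{y \leq x\}$ is concentrated atomically at $x$, so $\Delta\varphi\{x\} = c\,m(x)$. To bring the situation into the framework of~\cref{lemma:two_variant}, I first transfer this atomic mass from $x$ to a genuine analytic pole at $x'$: define a $\rho_0$-subharmonic surrogate $\varphi'$ by adding to $\varphi$ an appropriate multiple of $\log|g| - \psi_x$, normalized so the correction vanishes on $X \setminus \{y < x\}$ (hence $\varphi'(z) = \varphi(z)$ and $\varphi' \leq \varphi$) while producing a pole at $x'$ of the correct order. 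A direct Laplacian computation verifies $\varphi'$ is $\rho_0$-subharmonic, with an atom of mass $\gamma m(x)$ moved from $x$ to $x'$ and a residual atom of mass $(c-\gamma)m(x) \geq 0$ remaining at $x$. By monotonicity, any $(\varphi', \epsilon_0)$-extension at $z$ is automatically a $(\varphi, \epsilon_0)$-extension at $z$.

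Now I replay~\cref{lemma:two_variant} with $\varphi'$ and the rigid end $x'$: set $\phi := \varphi' - \gamma\log|g|$ and $\rho := \rho_0 - \gamma\delta_{x_G}$. Then $\phi$ is $\rho$-subharmonic with $\mass(\rho) = \mass(\rho_0) - \gamma \leq \mass(\rho_0) - 1$ whenever $\gamma \geq 1$, equivalently when $x \in \Gamma_\varphi$; the inductive hypothesis then yields $\epsilon_1 > 0$ and a $(\phi,\epsilon_1)$-extension $\tilde f \in k[T]$. Setting $f := g^\gamma \tilde f$, the verification that $f$ is a $(\varphi',\epsilon_0)$-extension for some $\epsilon_0 \in (0,\epsilon_1]$ follows the two-case template of~\cref{lemma:two_variant} (integer vs.\ non-integer $c$, with $\epsilon_0 = \epsilon_1(c-\gamma)/c$ in the latter), with an added directional-derivative check along the new segment $[x,x']$ introduced by the surrogate $\varphi'$. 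The principal technical obstacles I anticipate are (i) bookkeeping the multiplicity factors in the Laplacian computations when $m(x) > 1$, since $\log|g|$ and $\psi_x$ differ by a scaling along the segment $[x_G, x]$, and (ii) the residual case $x \notin \Gamma_\varphi$ (so $c < 1$, $\gamma = 0$), where the surgery yields no reduction and must be augmented by absorbing the attached segment $[\tilde x_j, x_j)$ into the modification.
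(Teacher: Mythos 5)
Your overall strategy matches the paper's exactly: invoke~\cref{multiplicity_descent} to produce a rigid proxy $x'$ with $\mathfrak{r}_{\Gamma_{\varphi,n}}(x')=x$ and $m(x')=m(x)$, build a surrogate $\varphi'\leq\varphi$ with $\varphi'(z)=\varphi(z)$ and $\varphi'(x_G)=0$ carrying a genuine pole at $x'$, observe that a $(\varphi',\epsilon_0)$-extension is a $(\varphi,\epsilon_0)$-extension, and hand $\varphi'$ over to the type-1 case~\cref{lemma:two_variant}. The observation that $z\not\leq x$ and the separate treatment of $x\notin\Gamma_\varphi$ via~\cref{tree_decomposition} also match the paper. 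Your choice to move only $\gamma m(x)$ of mass (rather than all of $cm(x)$, as the paper does) is a harmless variant: both leave a nonnegative residual and both drop $\lfloor\mass\rfloor$ by at least one.

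The genuine gap is in the correction term. You set $\psi_x=-\alpha(x\vee\cdot)$ and assert that $\log|g|-\psi_x$ ``vanishes on $X\setminus\{y<x\}$,'' which is what you need for $\varphi'(z)=\varphi(z)$. But $\log|g|$ and $\psi_x$ do not agree on the backbone $[x_G,x]$ when $m(x)>1$: since $\Delta\log|g|=m(x')(\delta_{x'}-\delta_{x_G})$ while $-\alpha(x'\vee\cdot)$ has a pole of the ``wrong'' order there, one checks (e.g.\ with $g=T^2-p$ over $\Q_p$, where $|g(x_{0,|p|^{1/2}})|=|p|$ but $\alpha(x_{0,|p|^{1/2}})=-\tfrac12\log|p|$) that $\log|g|=-m(x)\,\alpha(x'\vee\cdot)=m(x)\psi_x$ on $[x_G,x]$, not $\psi_x$. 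Hence $\log|g|-\psi_x=(m(x)-1)\psi_x\neq 0$ there, the correction leaks outside $\{y<x\}$, and you lose both $\varphi'(z)=\varphi(z)$ and (potentially) $\rho_0$-subharmonicity. You flag exactly this in your ``obstacle (i),'' but the flag contradicts the claim you need, and the obstacle is not resolved. The fix is to use $\log|g|-m(x)\psi_x$, which is precisely what the paper's formula
$$
\varphi'(y)=\varphi(y)+c\,m(x)\bigl(\alpha(y\vee x)-\alpha(y\vee x')\bigr)
$$
amounts to after the identification $\log|g|=-m(x)\alpha(x'\vee\cdot)$. With that correction in place (and the multiplier adjusted accordingly), the rest of your reduction and the directional-derivative check along $[x,x']$ go through as in~\cref{lemma:two_variant}. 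One small further inconsistency: your surrogate has $\Delta\varphi'\{x'\}=\gamma m(x)$, so the $c'$ seen by~\cref{lemma:two_variant} is $\gamma$, always an integer; the parenthetical invoking the non-integer case with $\epsilon_0=\epsilon_1(c-\gamma)/c$ cannot arise and should be dropped.
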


\begin{proof}
By~\cref{multiplicity_descent}, there exists $x' \in X^{\rig}$ such that $\mathfrak{r}_{\Gamma_{\varphi,n}}(x') = x$ and $m(x') = m(x)$.
If $x \in \Gamma_{\varphi}$, then set 
$c \coloneqq \frac{\Delta \varphi \{ x \}}{m(x)} \geq 1$.
We construct a new $\rho_0$-subharmonic function $\varphi'$ by extending $\varphi$ linearly with slope $c m(x)$ from the end $x$ to the rigid point $x'$; more precisely, $\varphi'$ is given by the formula
$$
\varphi'(y) := \varphi(y) 
+c m(x)\left( \alpha(y \vee x) - \alpha (y \vee x') \right).
$$
It is clear that $\varphi'(x_G) = 0$, $\varphi'(z) = \varphi(z)$, and $\varphi' \leq \varphi$. In particular, any $(\varphi',\epsilon_0)$-extension at $z$ is also a $(\varphi,\epsilon_0)$-extension at $z$. 
%

If $x \in \Gamma_{\varphi,n} \backslash \Gamma_{\varphi}$, then by the assumption~\cref{tree_decomposition}, we can 
replace $\varphi$ with a function that is linear on $[x, \mathfrak{r}_{\Gamma_{\varphi}}(x)]$ with slope $m(\mathfrak{r}_{\Gamma_{\varphi}}(x))$ and then repeat the same argument as above.
\end{proof}

The proof of~\cref{lemma:three_variant} is one point in the proof of~\cref{thm:otvariant} where the additional assumptions on the field $k$ are needed (in order to be able to apply~\cref{multiplicity_descent}). If one were to pick $x' \in X^{\rig}$ such that $m(x') > m(x)$, then the function $\varphi'$ need not be quasisubharmonic. 

In addition, the conditions on the field $k$ are such that the hypotheses of the following lemma can only occur when $k$ is algebraically closed and nontrivially-valued.

\begin{lemma}\label{lemma:four_variant}
Suppose that $\mass(\rho_0) \geq 1$ and that there exists $x \in \Ends(\Gamma_{\varphi,n})$ of type 4 and $x \not= z$. Then, there exists $\epsilon_1 > 0$ and a $\rho$-subharmonic function $\phi$, with $\mass(\rho) \leq \mass(\rho_0) - 1$, such that a $(\varphi,\epsilon_0)$-extension $f$ at $z$ may be constructed from a $(\phi,\epsilon_1)$-extension $\tilde{f}$ at $z$.
\end{lemma}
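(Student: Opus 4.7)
The plan is to ``relocate'' the pole of $\varphi$ at the type-4 end $x$ to a nearby rigid point $x'$, thereby reducing to a situation handled by~\cref{lemma:two_variant}. First I note that the hypotheses force $k$ to be algebraically closed and nontrivially-valued, since trivially-valued or spherically complete fields admit no type-4 points. In particular $m \equiv 1$ on $X$, type-2 points are dense on every segment, and $\alpha(x) < +\infty$. Combined with~\cref{tree_decomposition}, which says the ends of $\Gamma_{\varphi,n} \setminus \Gamma_\varphi$ are all of type 2 or 3, it follows that $x \in \Gamma_\varphi$, so $\mu \coloneqq \Delta\varphi\{x\} \geq 1$ and (using $\varphi \leq 0$ and $\mu > 0$) $\varphi(x) \in (-\infty, 0)$.

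I would then pick a type-2 point $y \in (x_G, x)$ very close to $x$, with $\delta \coloneqq \alpha(x) - \alpha(y) > 0$ to be controlled. Since $x$ is a leaf of $X$ (it has only one tangent direction) and $z \neq x$, we automatically have $\mathfrak{r}_{\Gamma_{\varphi,n}}(z) \neq x$, and by taking $y$ sufficiently close to $x$ one ensures $z \notin U(\vec v_x)$, where $\vec v_x \in T_{X,y}$ is the direction at $y$ containing $x$. Because $y$ is of type 2 and has infinitely many tangent directions, I would then choose a rigid point $x' \in X^{\rig}$ with $x' < y$ in a tangent direction at $y$ different from $\vec v_x$. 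Setting
\[
\varphi'(w) \coloneqq \varphi(w) + \mu\bigl[\alpha(w \vee x) - \alpha(w \vee x')\bigr], \qquad w \in X,
\]
in direct analogy with the formula from~\cref{lemma:three_variant}, a direct Laplacian calculation yields $\Delta\varphi' = \Delta\varphi + \mu(\delta_{x'} - \delta_x)$, so $\varphi'$ is $\rho_0$-subharmonic with $\varphi'(x_G) = 0$; explicitly, $\varphi' \equiv \varphi(y)$ on $U(\vec v_x) \cup \{y\}$, $\varphi'(w) = \varphi(y) - \mu(\alpha(w) - \alpha(y))$ on $[x', y]$, and $\varphi' = \varphi$ elsewhere. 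Thus $\varphi'$ has no singularity at $x$ but a new type-1 pole of weight $\mu \geq 1$ at $x'$.

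I would then apply~\cref{lemma:two_variant} to $\varphi'$ at the type-1 end $x'$, producing $\epsilon_0', \epsilon_1 > 0$ and a $\rho$-subharmonic function $\phi$ with $\mass(\rho) \leq \mass(\rho_0) - \lfloor \mu \rfloor \leq \mass(\rho_0) - 1$ such that any $(\phi, \epsilon_1)$-extension $\tilde f$ at $z$ yields a $(\varphi', \epsilon_0')$-extension $f$ at $z$. The main obstacle is then to verify that the same $f$ is a $(\varphi, \epsilon_0)$-extension at $z$ for some $\epsilon_0 \in (0, \epsilon_0']$. Outside $U(\vec v_x) \cup \{y\}$ this is immediate, since there $\varphi' \leq \varphi$, $\varphi'(z) = \varphi(z)$, and $\varphi \leq 0$, so the inequality for $\varphi'$ implies the one for $\varphi$ as soon as $\epsilon_0 \leq \epsilon_0'$. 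On $U(\vec v_x) \cup \{y\}$, however, $\varphi' \equiv \varphi(y) \geq \varphi(w)$, and the ratio of the $(\varphi, \epsilon_0)$- to the $(\varphi', \epsilon_0')$-extension inequalities computes to
\[
\exp\bigl((\epsilon_0' - \epsilon_0)\varphi(y) + (1+\epsilon_0)\mu(\alpha(w) - \alpha(y))\bigr).
\]
Using $\alpha(w) - \alpha(y) \leq \delta$ together with $\varphi(y) \to \varphi(x) < 0$ as $y \to x$, setting $\epsilon_0 = \epsilon_0'/2$ and taking $\delta$ small enough (in terms of $\epsilon_0'$, $\mu$, and $|\varphi(x)|$) forces this ratio to be $\leq 1$. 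This final $\delta$-shrinking argument is the technical heart of the proof, and is precisely what requires $y$ to be genuinely close to $x$.
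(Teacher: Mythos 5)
Your overall strategy --- relocate the singularity at the type-$4$ point $x$ to a rigid point $x'$ on a side branch at a nearby type-$2$ point $y$, apply~\cref{lemma:two_variant} to the modified function $\varphi'$, and then show the resulting extension also works for the original $\varphi$ --- is a reasonable idea, but the logic as written has a fatal circularity in the choice of constants. You pick $y$ (hence $\delta = \alpha(x)-\alpha(y)$) and $x'$ first, which determines $\varphi'$. Only then do you apply~\cref{lemma:two_variant} to $\varphi'$, and the $\epsilon_0'$ that lemma produces comes out of the strong induction applied to a $\rho$-subharmonic function $\phi$ that itself depends on $\varphi'$, hence on $\delta$. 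Unpacking~\cref{lemma:two_variant}, one finds $\phi = \varphi + \mu\alpha(\cdot\vee x) - (\mu-\lfloor\mu\rfloor)\alpha(\cdot\vee x')$; when $\mu$ is not an integer this has a genuine logarithmic pole at $x'$, and there is no reason for the inductive $\epsilon_1$ (and therefore $\epsilon_0'$) to be bounded below uniformly as $x'$ moves and $\delta \to 0$. But the ``technical heart'' of your argument is precisely the requirement that $\delta$ be taken small \emph{in terms of} $\epsilon_0'$. So the argument, as structured, does not close.

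The paper avoids this by reversing the order of choices. It defines $\phi \coloneqq \varphi + \gamma\,\alpha(\cdot\vee x)$ (with $\gamma=\lfloor c\rfloor$) directly from $\varphi$ and the data at $x$, with no auxiliary point involved --- this works because $\alpha(x)<+\infty$ for a type-$4$ point, so $\alpha(\cdot\vee x)$ is a bounded correction that strictly decreases the mass at $x$ without creating a new pole. The inductive hypothesis then yields a fixed $\epsilon_1>0$ for this $\phi$, and \emph{only afterwards} does the paper choose the type-$2$ point $u$ (your $y$) at distance $\eta<\min\{A(x)/c,\,-\epsilon_1\phi(x)/c\}$, then a rigid $u'$ below $u$, and sets $f=\tilde f g^{\gamma}$. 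Because $\eta$ is allowed to depend on $\epsilon_1$, the analogue of your ``$\delta$-shrinking'' step is legitimate. If you want to salvage your route, you would need to prove that the extension produced by~\cref{lemma:two_variant} admits an $\epsilon_0'$ that is bounded below independently of $\delta$; it is cleaner to construct $\phi$ first, as the paper does. Two smaller points: your explicit description $\varphi'\equiv\varphi(y)$ on $U(\vec v_x)\cup\{y\}$ tacitly assumes $\varphi$ is linear of slope $-\mu$ on $[x,y]$, which requires the same ``replace $\varphi$ by a smaller quasisubharmonic function linear on $[x,\tilde x]$'' reduction the paper performs; and the slope constant should be $c=-d_{\vec v}\varphi$ at the branch point $\tilde x$, not $\Delta\varphi\{x\}$, for that reduction to be consistent.
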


\begin{figure}[ht!]
\centering
  \begin{tikzpicture}[yscale=2.5]
    \draw[thick] (0,0) -- (0,1) node[left,xshift=-3pt] {$x_G$};
    \draw[thick] (-1,-0.5) node[left,xshift=-3pt] {$r_{\Gamma_{\varphi}}(z)$} -- (0,0) node[left,xshift=-3pt] {$z \vee x$};
    \draw[thick, dashed] (-1,-0.5) -- (-2,-1) node[left,xshift=-3pt] {$z$};
    \draw[thick] (0,0) -- (5, -5/7) node[right,xshift=3pt] {$x$};
    \draw[thick] (3,-3/7) node[right,yshift=5pt] {$\tilde{x}$} -- (2.5,-1) node[right,xshift=3pt] {$$};
    \draw[thick, dashed] (4.5,-9/14) node[right,yshift=5pt] {$u$} -- (4.4,-1) node[right,xshift=3pt] {$u'$};
    \draw[thick] (2,-2/7) -- (11/8,-1) node[left,xshift=-3pt] {$$};
    \node at (0,1) {\large $\bullet$};
    \node at (0,0) {\large $\bullet$};
    \node at (-1,-0.5) {\large $\bullet$};
    \node at (-2,-1) {\large $\bullet$};
    \node at (4.5,-9/14) {\large $\bullet$};
    \node at (5,-5/7) {\large $\bullet$};
    \node at (2.5,-1) {\large $\bullet$};
    \node at (3,-3/7) {\large $\bullet$};
    \node at (2,-2/7) {\large $\bullet$};
    \node at (11/8,-1) {\large $\bullet$};
    \node at (4.4,-1) {\large $\bullet$};
  \end{tikzpicture}
\caption{One possible configuration for $\Gamma_{\varphi,n}$ in the setting of~\cref{lemma:four_variant}.}\label{figure:lemma_four_variant}
\end{figure}
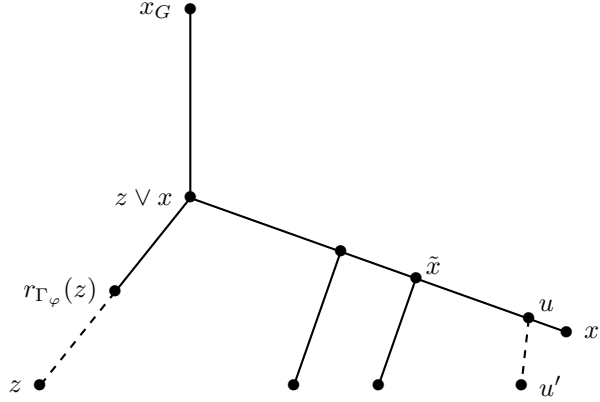

\begin{proof}
By assumption, $k$ is algebraically closed and nontrivially-valued (so the type 2 points of $X$ are dense). 
Moreover, all multiplicities are equal to 1 and $\alpha = A$. 

Note that, since $x$ is of type 4, $A(x) \in (0,+\infty)$.
Recall that $\Gamma'_{\varphi,n}$ denotes the convex hull of $\Gamma_{\varphi,n} \cup \{ z \}$ (we allow the possibility that $z \in \Gamma_{\varphi,n}$, so it is possible that $\Gamma'_{\varphi,n} = \Gamma_{\varphi,n}$). Let $\tilde{x} \in (x,x_G]$ be the minimal point with the property that $\{ y \in \Gamma'_{\varphi,n} \colon y \leq \tilde{x} \} \not= [x,\tilde{x}]$. That is, $\tilde{x}$ is the minimal point on $(x,x_G]$ such that $(\tilde{x},x]$ does not intersect any branches of $\Gamma'_{\varphi,n}$ other than the one containing $x$. 
It follows that $\mathfrak{r}_{\Gamma_{\varphi,n}}(z) \not\in (\tilde{x},x]$. 
An example is in~\cref{figure:lemma_four_variant}.

Set $c \coloneqq-d_{\vec{v}}\varphi \geq 1$, where $\vec{v} \in T_{X,\tilde{x}}$ is the unique tangent direction at $\tilde{x}$ with $x \in U(\vec{v})$; then, after possibly replacing $\varphi$ with a smaller quasisubharmonic function, we may assume that $\varphi$ is linear of slope $-c$ on $[x,\tilde{x}]$, i.e.\ 
$$
\varphi(y) = \varphi(\tilde{x}) - c\left(\alpha(y) - \alpha(\tilde{x}) \right)
$$
for $y \in [x,\tilde{x}]$.
If $\gamma \coloneqq \lfloor c \rfloor$, and $\rho \coloneqq \rho_0 - \gamma\delta_{x_G}$, then the function $\phi(y) \coloneqq \varphi(y) + \gamma  \alpha(y \vee x)$ is $\rho$-subharmonic and $\phi(x_G) = 0$. 
Suppose there exists $\epsilon_1 > 0$ and a $(\phi,\epsilon_1)$-extension $\tilde{f} \in k[T]$ at $z$. 

Pick $u \in (x,\tilde{x})$ of type 2 such that $\alpha(x) - \alpha(u) \in (0,\eta)$, where $\eta$ is a positive number chosen to be less than $\frac{A(x)}{c }$ and, if $\phi(x) < 0$, then we also require that $\eta < \frac{-\epsilon_1 \phi(x)}{c }$. 
%
%

Pick $u' \in X^{\rig}$ such that $\mathfrak{r}_{\Gamma_{\varphi,n}}(u') = u$, and a polynomial generator $g$ of $\frakm_{u'}$ with $|g(x_G)| = 1$. 
Set $f \coloneqq \tilde{f} g^{\gamma}$. 
By construction, $z \vee u = z \vee x$, and hence we have
$$
|f(z)|e^{-\varphi(z)} = |g(z)|^{\gamma} |\tilde{f}(z)|e^{\gamma\alpha(z \vee u)} e^{-\phi(z)} = |\tilde{f}(z)|e^{-\phi(z)}.
$$

\emph{Case 1}. Suppose that $c$ is an integer and $\phi(x) = 0$. Then, $\phi |_{[x,x_G]} = 0$, and hence $\varphi(y) = -c\alpha(y \vee x)$ for $y \in [x,x_G]$.
Arguing as in Case 1 of~\cref{lemma:two_variant}, it suffices to find an $\epsilon_0> 0$ such that
\begin{equation}
\label{lemma 7 case 1}
|f(y)|e^{-(1+\epsilon_0)\varphi(y)-A(y)} \leq |\tilde{f}(z)|e^{-\phi(z)}
\end{equation}
for $y \in \Gamma_{\varphi,n}$. By assumption, $\Gamma_{\phi,n} \cap [x,x_G] \subseteq \{ x_G \}$, so the left-hand side of~\cref{lemma 7 case 1} is equal to $|\tilde{f}(y)|e^{-(1+\epsilon_0)\phi(y) - A(y)}$ for $y \in \Gamma_{\phi,n}$, in which case~\cref{lemma 7 case 1} holds. 
Thus, it suffices to verify~\cref{lemma 7 case 1} for $y \in [x,x_G]$. It holds when $y =x_G$ by assumption, and the function $y \mapsto |f(y)|e^{-(1+\epsilon_0)\varphi(y)-A(y)}$ is decreasing on $[u,x_G]$, but increasing on $[u,x]$. 
Therefore, it is enough to check~\cref{lemma 7 case 1} at $y=x$. 
Let $\beta \coloneqq \log |\tilde{f}(x)| - \log |\tilde{f}(x_G)| \leq 0$ and suppose there exists an $\epsilon_0 > 0$ such that
\begin{equation}
\label{lemma 7 case 1 2}
e^{\beta} |g(x)|^{c}e^{-(1+\epsilon_0)\varphi(x) - A(x)} \leq 1.
\end{equation}
Then, $|f(x)|e^{-(1+\epsilon_0)\varphi(x) - A(x)} \leq |\tilde{f}(x_G)|$, and $|\tilde{f}(x_G)|$ is less than or equal to $|\tilde{f}(z)|e^{-\phi(z)}$ by assumption. Thus, it suffices to show~\cref{lemma 7 case 1 2}. Observe that
\begin{align*}
e^{\beta} |g(x)|^{c} e^{-(1+\epsilon_0)\varphi(x) - A(x)} &= \textrm{exp}\left( \beta -c \alpha(u) - (1+\epsilon_0)\varphi(\tilde{x}) + (1+\epsilon_0)c \left( \alpha(x) - \alpha(\tilde{x}) \right) - A(x) \right)\\
&\leq \textrm{exp}\left( -c\left( \alpha(u) - \alpha(x) \right) - A(x) + \epsilon_0 c\alpha(x) \right),
\end{align*}
so~\cref{lemma 7 case 1 2} holds at $y=x$ if $\epsilon_0 \leq \frac{A(x) -c\eta}{c\alpha(x)}$.

\emph{Case 2}. Suppose that $c$ is an integer and $\phi(x) < 0$. As in Case 1 of~\cref{lemma:two_variant}, it suffices to find $\epsilon_0 > 0$ such that~\cref{lemma 7 case 1} holds
for all $y \in \Gamma_{\varphi,n}$. Set $\widetilde{\Gamma}_{\phi,n} = \Gamma_{\phi,n} \cup \{ x_G \}$. 
Again arguing as in Case 1 of~\cref{lemma:two_variant},~\cref{lemma 7 case 1} holds for $y \in \widetilde{\Gamma}_{\phi,n}$ provided $\epsilon_0 \leq \frac{\epsilon_1 n}{n+c(n+1)}$. Finally, as in Case 1, the function $y \mapsto |f(y)|e^{-(1+\epsilon_0)\varphi(y)-A(y)}$ on $\Gamma_{\varphi,n}\backslash \widetilde{\Gamma}_{\phi,n}$ is maximized at $y=x$, so it suffices to find an $\epsilon_0 > 0$ so that holds~\cref{lemma 7 case 1} there.
Observe that
$$
|g(u)|^c e^{(\epsilon_1 - \epsilon_0) \phi(x) + (1+\epsilon_0) c \alpha(x)} \leq 1
$$
provided $\epsilon_0 \leq \frac{-\epsilon_1 \phi(x) - c\eta}{-\phi(x) +c\alpha(x)}$, where $-\epsilon_1 \phi(x) - c\eta > 0$ by the choice of $u$. 
It follows that
$$
|f(x)|e^{-(1+\epsilon_0)\varphi(x) - A(x)} = |g(u)|^c e^{(\epsilon_1 - \epsilon_0)\phi(x)} e^{(1+\epsilon_0)c\alpha(x)} |\tilde{f}(x)| e^{-(1+\epsilon_1)\phi(x) - A(x)} \leq |\tilde{f}(z)|e^{-\phi(z)},
$$
which completes the proof in Case 2.

\emph{Case 3}. Suppose $c > \gamma$. 
Following the previous two cases, it suffices to find $\epsilon_0 > 0$ such that~\cref{lemma 7 case 1} holds for all $y \in \Gamma_{\varphi,n}$.
Set $\widetilde{\Gamma}_{\phi,n} = \Gamma_{\phi,n} \cup \{ x_G \}$.
If $y \in \widetilde{\Gamma}_{\phi,n}$, then $\phi(y) \leq -\frac{n}{n+1} \alpha(y)$, and hence
$$
\textrm{exp}\left( (\epsilon_1 - \epsilon_0)\phi(y) + \gamma \epsilon_0 \alpha(y \vee u') \right) \leq \left( e^{-\alpha(y)} \right)^{\frac{n}{n+1}(\epsilon_1-\epsilon_0) - \gamma  \epsilon_0}.
$$
This last quantity is less than or equal to $1$ provided $\epsilon_0 \leq \frac{\epsilon_1 n}{\gamma  (n+1) + n}$. In this case, we have
$$
|f(y)|e^{-(1+\epsilon_0)\varphi(y)-A(y)} \leq e^{\gamma  \epsilon_0 \alpha(y \vee u') + (\epsilon_1 - \epsilon_0)\phi(y)} |\tilde{f}(y)|e^{-(1+\epsilon_1 )\phi(y) - A(y)} \leq |\tilde{f}(z)| e^{-\phi(z)}.
$$
Let $x'' \in [u,x_G]$ be the minimal point of $\widetilde{\Gamma}_{\phi,n} \cap [u,x_G]$.
If $\vec{v}'' \in T_{X,x''}$ is the unique direction at $x''$ with $x \in U(\vec{v}'')$, then the function $y \mapsto |f(y)|e^{-(1+\epsilon_0)\varphi(y)-A(y)}$ is decreasing on $[u,x'']$ provided 
$$
d_{\vec{v}''}( \epsilon_0 \gamma  \alpha - (1+\epsilon_0)\phi - A ) \leq 0.
$$
Using the fact that $d_{\vec{v}''} \phi \geq -\frac{n}{n+1}$, this holds if $\epsilon_0 \leq \frac{1}{(n+1)\gamma +n}$.

Now, the function $y \mapsto |f(y)|e^{-(1+\epsilon_0)\varphi(y) - A(y)}$ on $[x,u)$ achieves its maximum at $y=x$, so it suffices to find an $\epsilon_0 > 0$ such that~\cref{lemma 7 case 1} holds at $y=x$. 
If $\phi(x) = 0$, one can argue as in Case 1. If $\phi(x) < 0$, then 
it follows that
$$
|g(u)|^{\gamma} e^{(\epsilon_1 - \epsilon_0)\phi(x) + (1+\epsilon_0)\gamma \alpha(x)} \leq 
\textrm{exp}\left( \gamma  \eta + \epsilon_1 \phi(x) -\epsilon_0 \varphi(x) \right),
$$
and this is bounded above by $1$ 
provided $\epsilon_0 \leq \frac{-\phi(x) \epsilon_1 - \gamma \eta}{-\varphi(x)}$; note that $-\phi(x)\epsilon_1 - \gamma  \eta > 0$ by the choice of $u$. 
Therefore, we have 
$$
|f(x)|e^{-(1+\epsilon_0)\varphi(x)-A(x)} \leq |g(u)|^{\gamma} e^{(\epsilon_1 - \epsilon_0)\phi(x) + \gamma(1+\epsilon_0)\alpha(x)} |\tilde{f}(x)|e^{-(1+\epsilon_1)\phi(x)-A(x)}  \leq |\tilde{f}(z)|e^{-\phi(z)},
$$
which completes the proof.
\end{proof}

If none of the hypotheses of the previous 4 lemmas hold, then $\Gamma_{\varphi,n}'$ must be the interval $[z,x_G]$. 
This special case is addressed below.
\begin{lemma}\label{lemma:five_variant}
Suppose $\mass(\rho_0) \geq 1$ and $\Gamma_{\varphi,n}' = [z,x_G]$, then there exists $\epsilon_0 > 0$ such that any nonzero constant function is a $(\varphi,\epsilon_0)$-extension at $z$. 
\end{lemma}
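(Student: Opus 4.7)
For a nonzero constant $f \in k[T]$, the condition that $f$ be a $(\varphi,\epsilon_0)$-extension at $z$ reduces to
$$
\varphi(z) \leq (1+\epsilon)\varphi(x) + A(x) \quad \text{for all } x \in X \setminus Z(\varphi), \ \epsilon \in [0,\epsilon_0].
$$
Since $\varphi = \mathfrak{r}_{\Gamma'_{\varphi,n}}^*\varphi$ is constant along directions leaving $\Gamma'_{\varphi,n} = [z,x_G]$, and since $A = -\log r$ is non-decreasing along any geodesic moving away from this segment, it suffices to check the inequality for $x \in [z,x_G]$.

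Under the implicit hypotheses ruling out \cref{lemma:two_variant,lemma:three_variant,lemma:four_variant}, the subtree $\Gamma_{\varphi,n}$ must reduce to $\{x_G\}$: since $\Gamma_{\varphi,n} \subseteq \Gamma'_{\varphi,n}$ is a subtree containing $x_G$, it equals $[y_0,x_G]$ for some $y_0 \in [z,x_G]$, and if $y_0 \neq x_G$, then $x_G \in \Ends(\Gamma_{\varphi,n})$ has type 2 with $\mathfrak{r}_{\Gamma_{\varphi,n}}(z) = y_0 \neq x_G$, placing us in the setting of \cref{lemma:three_variant}, contradiction.

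Parameterize $[z,x_G]$ by $\alpha$, with $\alpha(x_G) = 0$, and let $s = d\varphi/d\alpha$. Using that $\varphi$ is constant along directions at $z$ other than $\vec{v}_{x_G}$ (by retraction), one computes
$$
\Delta\varphi[z, x] = -s(\alpha(x)^-) \quad \text{for } x \in (z, x_G].
$$
The hypothesis $\Gamma_{\varphi,n} = \{x_G\}$ says that the left-hand side is strictly less than $\frac{n}{n+1}m(x)$, so $s(\alpha(x)^-) > -\frac{n}{n+1} m(x)$. Subharmonicity of $\varphi$ at $z$ yields $s(\alpha(z)^-) \leq 0$, which combined with the convexity of $\varphi$ on $[z,x_G]$ forces $s \leq 0$ throughout; hence $\varphi$ is non-increasing in $\alpha$, and in particular $\varphi(x) \geq \varphi(z)$ on the segment. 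Reparameterizing via $dA/d\alpha = m$ and integrating from $x_G$ yields $|\varphi(x)| < \frac{n}{n+1} A(x)$ on $(z,x_G]$.

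Combining, for $\epsilon \in [0, 1/n]$ and $x \in [z,x_G]$:
$$
(1+\epsilon)\varphi(x) + A(x) - \varphi(z) \geq \epsilon\varphi(x) + A(x) = A(x) - \epsilon|\varphi(x)| > A(x)\left(1 - \frac{n\epsilon}{n+1}\right) \geq 0,
$$
where the first inequality uses $\varphi(x) \geq \varphi(z)$ and the strict middle inequality uses the slope bound. Thus $\epsilon_0 = 1/n$ works. The main subtlety is correctly relating the combinatorial condition $\Gamma_{\varphi,n} = \{x_G\}$ to a clean pointwise slope estimate; the key observation is that the interior contribution to $\Delta\varphi[z,x]$ and the atom $-s(\alpha(z)^-)$ at $z$ cancel, producing the compact identity $\Delta\varphi[z,x] = -s(\alpha(x)^-)$.
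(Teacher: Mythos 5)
Your proposal contains a genuine gap: the reduction to $\Gamma_{\varphi,n} = \{x_G\}$ is both unnecessary and unjustified, and the subsequent slope estimate fails without it.

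The lemma's only hypothesis is $\Gamma'_{\varphi,n} = [z,x_G]$; it is perfectly possible to have $\Gamma_{\varphi,n} = [y_0, x_G]$ with $y_0$ a type-2 or type-3 point strictly between $z$ and $x_G$ onto which $z$ retracts (this is in fact the generic situation, as the paper's closing summary makes explicit: ``$\Ends(\Gamma_{\varphi,n})$ consists of a single type-2 or type-3 point onto which $z$ retracts''). Your argument that this forces $\Gamma_{\varphi,n} = \{x_G\}$ --- by applying \cref{lemma:three_variant} to the ``end'' $x_G$ --- does not go through. The proof of \cref{lemma:three_variant} requires $c = \Delta\varphi\{x\}/m(x) \geq 1$ at the chosen end $x$, and this fails for $x = x_G$: the signed measure $\Delta\varphi$ typically carries negative mass at $x_G$ (e.g.\ $\Delta(-\alpha(\cdot\vee w)) = m(w)(\delta_w - \delta_{x_G})$), so $\Delta\varphi\{x_G\}$ need not be $\geq 1$ even when $\mass(\rho_0) \geq 1$. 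Thus the hypotheses of \cref{lemma:three_variant} being formally satisfied at the root does not entitle you to invoke it, and no contradiction arises.

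Once $\Gamma_{\varphi,n}$ is allowed to be a nondegenerate interval $[y_0,x_G]$, the claimed bound $|\varphi(x)| < \tfrac{n}{n+1}A(x)$ on $(z,x_G]$ breaks down. Concretely (over $k = \C_p$ say), take $z$ rigid, $y_0 \in (z,x_G)$ of type 2 with $\alpha(y_0) = 1$, $\rho_0 = \delta_{x_G}$, and let $\varphi$ be linear with slope $-\tfrac{n+1}{n+2}$ on $[y_0,x_G]$ and constant on $[z,y_0]$. Then $\Gamma_{\varphi,n} = [y_0,x_G]$, $\Gamma'_{\varphi,n} = [z,x_G]$, but $|\varphi(y_0)| = \tfrac{n+1}{n+2} > \tfrac{n}{n+1} = \tfrac{n}{n+1}A(y_0)$. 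The final chain of inequalities in your write-up therefore collapses, even though the lemma's conclusion still holds in this example (with $\epsilon_0 = \tfrac{n+2}{n+1}$, say).

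The paper's proof avoids all of this: after reducing to $y \in [z,x_G]$ and discarding $\varphi(z)-\varphi(y) \leq 0$, it observes that $-\epsilon_0\varphi - A$ is \emph{concave} along the segment (both $\varphi$ and $A$ are convex), vanishes at $x_G$, and therefore stays $\leq 0$ as soon as its directional derivative at $x_G$ is $\leq 0$ --- a single condition $\epsilon_0 \leq m(\vec{v})/(-d_{\vec{v}}\varphi)$ requiring no uniform slope bound along the way. If you want to salvage your argument, drop the $\Gamma_{\varphi,n} = \{x_G\}$ claim entirely and replace the global slope bound with this one-point convexity argument.
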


\begin{proof}
It suffices to find $\epsilon_0 > 0$ such that $-(1+\epsilon_0)\varphi(y) -A(y) \leq -\varphi(z)$ for any $y \in [z,x_G]$. 
For such $y$, $\varphi(z) - \varphi(y) \leq 0$ and hence it suffices to find $\epsilon_0 > 0$ such that $-\epsilon_0 \varphi(y) - A(y) \leq 0$ for all such $y$.
This holds for any $\epsilon_0 > 0$ at $y= x_G$.
Thus, if $\vec{v} \in T_{X,x_G}$ is the unique direction at $x_G$ with $z \in U(\vec{v})$, then it suffices to find $\epsilon_0 > 0$ such that $d_{\vec{v}}\left( - \epsilon_0 \varphi - A \right) \leq 0$. This holds if $\epsilon_0 \leq \frac{m(\vec{v})}{-d_{\vec{v}} \varphi}$. 
\end{proof}

Let us now summarize the proof of~\cref{thm:otvariant}: 
if $\mass(\rho_0) < 1$, then a $(\varphi,\epsilon_0)$-extension at $z$ exists by \cref{lemma:one_variant}. Suppose now that $\mass(\rho_0) \geq 1$; in particular, $\Gamma_{\varphi,n} \not= \emptyset$ and $\Ends(\Gamma_{\varphi,n}) \not= \emptyset$. As discussed at the start of the section, we assume that $\varphi$ is locally constant off of the convex hull of $\Gamma_{\varphi} \cup \{ z \}$. By repeatedly applying \cref{lemma:two_variant},  \cref{lemma:three_variant}, and \cref{lemma:four_variant}, we may assume that $\Ends(\Gamma_{\varphi,n})$ consists of a single type-2 or type-3 point onto which $z$ retracts.
This is precisely the setting of \cref{lemma:five_variant}, which then asserts that a $(\varphi,\epsilon_0)$-extension at $z$ exists. Now, if $z \in X^{\rig}$ and $k = k^a$, then $\H(z) = k$, so the second assertion is immediate from the first. This concludes the proof of~\cref{thm:otvariant}.

\section{Non-Archimedean Demailly approximation of quasisubharmonic Functions}\label{section:application}

Let $X$ be the Berkovich closed unit disc over $k$. Given a quasisubharmonic $\varphi$ on $X$, one may wish to approximate it by a sequence of quasisubharmonic functions whose singularities are controlled. One such result is already well known:~\cite[Theorem 2.10]{dynberko} shows that there is a decreasing sequence of bounded quasisubharmonic functions $(\varphi_n)_{n=1}^{\infty}$ on $X$ which decrease pointwise to $\varphi$. Let us briefly recall the construction: if $\varphi$ is $\rho_0$-subharmonic, for each $n \geq 1$, consider the finite subtree
$$
\Gamma_n \coloneqq \bigg\{ x \in X \colon (\rho_0 +\Delta \varphi) \{  y \geq x \} \geq 2^{-n} \textrm{ and } d(x_G,x) \leq 2^n \bigg\},
$$
where $d$ is the generalized metric on $X$ from~\S\ref{section:quasisubharmonic}. If $\mathfrak{r}_n \colon X \to \Gamma_n$ denotes the retraction map of $X$ onto $\Gamma_n$, then $\varphi_n$ is, up to translation by a constant, equal to $(\mathfrak{r}_n)^*\varphi$. A similar argument appears in~\cite[\S4.6]{favre06}.

Notice that the construction of the sequence $(\varphi_n)_{n=1}^{\infty}$ heavily uses the tree structure on $X$ (indeed, the same proof yields an analogous result for any metric tree) and moreover it depends on the choice of exhausting sequence of subtrees $\Gamma_n$. Without involving the analytic structure, it is unlikely that such a regularization result will generalize to higher-dimensional analytic spaces. The goal of this section is to construct a canonical regularization of a quasisubharmonic function on the Berkovich unit disc. The inspiration is the much-celebrated regularization theorem of Demailly~\cite[Proposition 3.1]{demailly92}.

Let us briefly recall the construction of the Demailly approximation of a plurisubharmonic function on the complex unit disc. Let $\mathbf{D}$ be the open unit disc in $\mathbf{C}$, and let $\varphi$ be a plurisubharmonic function on $\mathbf{D}$. Consider the Hilbert space $\mathscr{H}_{\varphi}$ of holomorphic functions $f$ on $\mathbf{D}$ satisfying the integrability condition
$$
\| f \|_{\varphi}^2 \coloneqq \int_{\mathbf{D}} |f|^2 e^{-2\varphi} d\lambda < +\infty,
$$
where $d\lambda$ is the Lebesgue measure. For each $m \geq 1$, let $(f_{m, n})_{n=1}^{\infty}$ be an orthonormal basis of $\mathscr{H}_{m\varphi}$ and define a plurisubharmonic function $\varphi_m$ on $\mathbf{D}$ by the formula 
$$
\varphi_m = \frac{1}{2m} \log\left( \sum_{n=1}^{\infty} |f_{m,n}|^2 \right).
$$
This function $\varphi_m$ is called the \emph{Demailly approximation} associated to $\mathscr{H}_{m\varphi}$; $\varphi_m$ has analytic singularities, 
in the sense that it can be locally written as $c \log \left( \sum_{i=1}^N |g_i |^2 \right) + \beta$ for some constant $\alpha > 0$,  local holomorphic functions $g_i$, and a locally bounded function $\beta$. The Demailly approximation may also be expressed as 
$$
\varphi_m = \sup_f \frac{1}{m} \log |f|,
$$
where the supremum ranges over all holomorphic functions $f$ in the unit ball of $\mathscr{H}_{m\varphi}$. In~\cite[Proposition 3.1]{demailly92}, it is shown that the sequence $(\varphi_m)_{m =1}^{\infty}$ converges pointwise (and in $L^1_{\textrm{loc}}$) to $\varphi$ and, after passing to a subsequence, is decreasing in $m$.

In~\S\ref{section:regularization}, given a quasisubharmonic function $\varphi$ on the Berkovich unit disc $X$, we construct an ideal $\H_{\varphi}$ of the Tate algebra and the non-Archimedean Demailly approximation $\varphi_m$ associated to $\H_{m\varphi}$. Using \cref{thm:otvariant}, we show that the sequence $(\varphi_m)_{m=1}^{\infty}$  converges to $\varphi$. In~\S\ref{section:multiplier}, we briefly discuss the connection between the ideals $\H_{\varphi}$ and a locally-defined non-Archimedean multiplier ideal associated to $\varphi$.

\subsection{Non-Archimedean Demailly approximation and a regularization theorem}\label{section:regularization}
Let $\varphi$ be a quasisubharmonic function on $X$ with $\varphi \leq 0$. The case when $\varphi$ may be positive will be addressed later. As in~\S\ref{section:ot}, consider the function $\| \cdot \|_{\varphi} \colon k\{ T \} \to [0,+\infty]$ defined by
$$
\| f \|_{\varphi} \coloneqq \sup_{x \in X \backslash Z(\varphi)} |f(x)|e^{-\varphi(x) - A(x)} \textrm{ , $f \in k\{ T \}$.}
$$
This gives a non-Archimedean norm on the subset $\large\{ \| \cdot \|_{\varphi} < +\infty \large\}$. The norms of this form are not submultiplicative in general, but they do satisfy a monotonicity property: if $\varphi,\phi$ are quasisubharmonic and such that $\varphi \leq \phi \leq 0$, then $\| f \|_{\phi} \leq \| f \|_{\varphi}$ for any $f \in k\{ T \}$. In particular, the limit 
$$
\| f \|^+_{\varphi} \coloneqq \lim_{\epsilon \to 0^+} \| f \|_{(1+\epsilon)\varphi}
$$
exists (though it may be infinite). Consider the subset $\H_{\varphi}$ of the Tate algebra consisting of those series $f \in k\{ T \}$ such that $\| f \|^+_{\varphi} < +\infty$. The function $\| \cdot \|^+_{\varphi}$ defines a non-Archimedean norm on $\H_{\varphi}$, which is not submultiplicative in general. 
With this norm, $\H_{\varphi}$ is a normed $k\{ T \}$-module. 

\begin{lemma}
The subset $\H_{\varphi}$ is a principal ideal of $k\{ T \}$, which is complete for the norm $\| \cdot \|^+_{\varphi}$. In particular, $\H_{\varphi}$ is a Banach $k\{ T \}$-module.
\end{lemma}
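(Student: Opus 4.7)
The plan is to verify the three assertions---ideal, principal, complete---in turn. Closure of $\H_\varphi$ under addition is immediate from the fact that $\|\cdot\|_\varphi^+$ inherits the non-Archimedean triangle inequality from its defining sequence $\|\cdot\|_{(1+\epsilon)\varphi}$. For absorption under multiplication by an arbitrary $h \in k\{T\}$, I would invoke the pointwise bound $|h(x)| \leq |h(x_G)|$ on $X$ (the defining property of the Gauss norm), which gives $\|hf\|_{(1+\epsilon)\varphi} \leq |h(x_G)| \cdot \|f\|_{(1+\epsilon)\varphi}$, and letting $\epsilon \to 0^+$ yields $\|hf\|_\varphi^+ \leq |h(x_G)| \cdot \|f\|_\varphi^+ < +\infty$. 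Principality is then automatic: the one-variable Tate algebra $k\{T\}$ is a principal ideal domain (trivially in the trivially-valued case, where $k\{T\} = k[T]$, and via Weierstrass preparation otherwise), so every ideal of it is principal.

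The main obstacle is completeness, because $\|\cdot\|_\varphi^+$ is itself a limit of suprema and interchanging limits with $\sup$ requires some care. Given a Cauchy sequence $(f_n)$ in $(\H_\varphi, \|\cdot\|_\varphi^+)$, the first step is to extract a candidate limit in the ambient Tate algebra. Evaluation at the Gauss point, where $\varphi(x_G) \leq 0$ and $A(x_G) = 0$, gives
$$
|f_n(x_G) - f_m(x_G)| \leq \|f_n - f_m\|_{(1+\epsilon)\varphi} \leq \|f_n - f_m\|_\varphi^+,
$$
so $(f_n)$ is Cauchy for the Gauss norm and hence converges to some $f \in k\{T\}$ by completeness of the Tate algebra. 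Because every multiplicative seminorm on $k\{T\}$ extending $|\cdot|$ is dominated by the Gauss norm, pointwise convergence $f_m(y) \to f(y)$ holds for every $y \in X$.

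The technical heart of the argument is lifting this pointwise convergence back to convergence in $\|\cdot\|_\varphi^+$. I would fix $n$ and $\epsilon > 0$, and observe that for every $y \in X \setminus Z(\varphi)$ the inequality $|(f_n - f_m)(y)|e^{-(1+\epsilon)\varphi(y) - A(y)} \leq \|f_n - f_m\|_\varphi^+$ survives the pointwise limit in $m$, giving
$$
|(f_n - f)(y)|e^{-(1+\epsilon)\varphi(y) - A(y)} \leq \liminf_{m\to\infty} \|f_n - f_m\|_\varphi^+.
$$
Taking the supremum over $y$ and then letting $\epsilon \to 0^+$ yields $\|f_n - f\|_\varphi^+ \leq \liminf_m \|f_n - f_m\|_\varphi^+$, which tends to $0$ as $n \to \infty$ by Cauchyness. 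This shows simultaneously that $f - f_n \in \H_\varphi$ (hence $f \in \H_\varphi$) and that $f_n \to f$ in $\|\cdot\|_\varphi^+$, which is exactly the completeness assertion.
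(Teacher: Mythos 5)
Your treatment of the ideal and principality assertions is correct and is exactly what the paper does: closure under addition is immediate, absorption follows from $|h(x)|\leq |h(x_G)|$, and $k\{T\}$ is a PID. The completeness argument, however, has a genuine gap that the paper's argument is specifically designed to sidestep.

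The problem begins with the direction of the inequalities. Since $\varphi\leq 0$, the family $\epsilon\mapsto \|\cdot\|_{(1+\epsilon)\varphi}$ is \emph{increasing} in $\epsilon$, so $\|\cdot\|_\varphi^+=\lim_{\epsilon\to 0^+}\|\cdot\|_{(1+\epsilon)\varphi}$ is the \emph{infimum} of the family, i.e.\ $\|\cdot\|_\varphi^+\leq\|\cdot\|_{(1+\epsilon)\varphi}$ for every $\epsilon>0$. Your displayed chain $\|f_n-f_m\|_{(1+\epsilon)\varphi}\leq\|f_n-f_m\|_\varphi^+$ is therefore reversed (the weaker conclusion $|f_n(x_G)-f_m(x_G)|\leq\|f_n-f_m\|_\varphi^+$ still holds, but one must take $\epsilon\to 0^+$ to get it). The same confusion reappears in the crucial step: the pointwise bound you assert, namely $|(f_n-f_m)(y)|e^{-(1+\epsilon)\varphi(y)-A(y)}\leq\|f_n-f_m\|_\varphi^+$ for a \emph{fixed} $\epsilon>0$, is false in general, because the left-hand side is only bounded by $\|f_n-f_m\|_{(1+\epsilon)\varphi}$, which sits \emph{above} $\|\cdot\|_\varphi^+$. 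If you replace it by the valid estimate $|(f_n-f_m)(y)|e^{-(1+\epsilon)\varphi(y)-A(y)}\leq\|f_n-f_m\|_{(1+\epsilon)\varphi}$ and pass to the limit in $m$ and then sup in $y$, you only obtain $\|f_n-f\|_{(1+\epsilon)\varphi}\leq\liminf_m\|f_n-f_m\|_{(1+\epsilon)\varphi}$; to deduce $\|f_n-f\|_\varphi^+\to 0$ you would then need to interchange $\lim_{\epsilon\to 0^+}$ with $\liminf_m$, and the Cauchy hypothesis in $\|\cdot\|_\varphi^+$ does not give any uniformity in $\epsilon$ to justify that interchange. There is also a circularity lurking: the ``good'' identity $\|g\|_\varphi^+=\|g\|_\varphi$ one would want to invoke is itself only available once one already knows $g\in\H_\varphi$.

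The paper's proof avoids all of this by exploiting principality before tackling convergence. From Cauchyness in the Gauss norm one gets a limit $f\in k\{T\}$; since ideals of $k\{T\}$ are closed for the Gauss norm, $f\in\H_\varphi$ immediately. Writing $\H_\varphi=(h)$, $f_j=g_jh$, $f=gh$, one has $g_j\to g$ in the Gauss norm, and then
$$
\|f_j-f\|_\varphi^+\leq\|f_j-f\|_{(1+\epsilon)\varphi}\leq |(g_j-g)(x_G)|\cdot\|h\|_{(1+\epsilon)\varphi},
$$
which can be made arbitrarily small. This converts convergence in the singular norm $\|\cdot\|_\varphi^+$ into an estimate purely in the Gauss norm of the cofactors $g_j-g$, and no limit interchange is needed. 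You should rework the completeness part of your argument along these lines; as written, it does not establish completeness.
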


\begin{proof}
It is clear that $\H_{\varphi}$ is closed under addition. 
Given $g \in k\{ T \}$, the image of the function $x \mapsto |g(x)|$ is contained in the interval $[0,|g(x_G)|]$. 
For any $f \in \H_{\varphi}$ and for any $\epsilon > 0$, $\| fg \|_{(1+\epsilon)\varphi} \leq |g(x_G)| \cdot\| f \|_{(1+\epsilon)\varphi}$. In particular, $fg \in \H_{\varphi}$. Furthermore, the Tate algebra $k\{ T \}$ is a principal ideal domain (when $k$ is nontrivially-valued, see~\cite[Corollary 2.2.10]{bosch}; otherwise, $k\{ T \} = k[T]$) and hence $\H_{m\varphi}$ is principal. 

Let $(f_j)_{j=1}^{\infty} \subset \H_{\varphi}$ be a Cauchy sequence in the norm $\| \cdot \|^+_{\varphi}$. 
It follows that $(f_j)_{j=1}^{\infty}$ is also a Cauchy sequence for the Gauss norm, since $|\cdot |_{x_G} \leq \| \cdot \|_{\varphi}^+$. 
By the completeness of $k\{ T \}$, the sequence $(f_j)_{j=1}^{\infty}$ admits a limit $f \in k\{ T \}$. 
All ideals of $k\{ T \}$ are closed, so $f \in \H_{\varphi}$.
If $\H_{\varphi} = (h)$, then we can write $f_j = g_j h$ and $f = gh$ for some $g_j,g \in k\{ T \}$ such that $g_j \to g$ in $|\cdot |_{x_G}$. For any $\delta > 0$, take $\epsilon > 0$ sufficiently small so that $\| h \|_{(1+\epsilon)\varphi} < +\infty$ and take $j \gg 0$ so that $|(g_j - g)(x_G)| < \frac{\delta}{\| h \|^+_{\varphi}}$. It follows that
$$
\| f_j - f \|^+_{\varphi} \leq \| f_j - f \|_{(1+\epsilon)\varphi} \leq \left( \sup_{x \in X \backslash Z(\varphi)} |(g_j - g)(x)| \right) \| h \|_{(1+\epsilon)\varphi} = |(g_j - g)(x_G)| \cdot \| h \|_{(1+\epsilon)\varphi} < \frac{\delta}{\| h \|^+_{\varphi}}  \| h \|_{(1+\epsilon)\varphi} \leq \delta.
$$
Therefore, $f_j \to f$ in the norm $\| \cdot \|^+_{\varphi}$, and hence $\H_{\varphi}$ is complete with respect to $\| \cdot \|^+_{\varphi}$. 
\end{proof}

\begin{proposition}\label{prop:bounded}
Let $\varphi,\phi$ be two quasisubharmonic functions on $X$ with $\varphi ,\phi \leq 0$. 
If there exist $C_1,C_2 > 0$ such that $\phi - C_2 \leq \varphi \leq \phi + C_1$ on $X$, then
$$
e^{-C_1} \| \cdot \|_{\phi}^+ \leq \| \cdot \|_{\varphi}^+ \leq e^{C_2} \| \cdot \|_{\phi}^+
$$
as functions on $k\{ T \}$.
In particular, $\H_{\varphi}$ and $\H_{\phi}$ coincide as ideals of $k\{ T \}$ and the identity map between them is an isomorphism of Banach $k\{ T\}$-modules. 
Moreover, if $\varphi$ is bounded, then $\H_{\varphi} = k\{ T \}$.
\end{proposition}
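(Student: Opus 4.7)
The two norm inequalities are essentially formal consequences of exponentiating the hypothesis $\phi - C_2 \leq \varphi \leq \phi + C_1$. First I would observe that this two-sided sandwich forces $Z(\varphi) = Z(\phi)$, so the suprema defining $\|\cdot\|_{(1+\epsilon)\varphi}$ and $\|\cdot\|_{(1+\epsilon)\phi}$ range over the same subset of $X$. Multiplying the sandwich by $-(1+\epsilon)$ and exponentiating, then multiplying through by $|f(x)|e^{-A(x)}$, gives the pointwise bound
$$e^{-(1+\epsilon)C_1}|f(x)|e^{-(1+\epsilon)\phi(x) - A(x)} \leq |f(x)|e^{-(1+\epsilon)\varphi(x) - A(x)} \leq e^{(1+\epsilon)C_2}|f(x)|e^{-(1+\epsilon)\phi(x) - A(x)}$$
for all $f \in k\{T\}$, all $\epsilon > 0$, and all $x \in X \setminus Z(\varphi)$. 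Taking the supremum over $x$ and then the limit $\epsilon \to 0^+$ would yield
$$e^{-C_1}\|f\|_\phi^+ \leq \|f\|_\varphi^+ \leq e^{C_2}\|f\|_\phi^+.$$

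Given this equivalence of norms, the finiteness conditions $\|f\|_\varphi^+ < +\infty$ and $\|f\|_\phi^+ < +\infty$ are the same, so $\H_\varphi = \H_\phi$ as subsets and thus, by the previous lemma, as (principal) ideals of $k\{T\}$. The norm equivalence makes the identity map between $(\H_\varphi, \|\cdot\|_\varphi^+)$ and $(\H_\phi, \|\cdot\|_\phi^+)$ a bicontinuous $k\{T\}$-module isomorphism, hence an isomorphism of Banach $k\{T\}$-modules.

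For the final assertion I would compare $\varphi$ to $\phi \equiv 0$, which is trivially quasisubharmonic (and nonpositive). The key observation is that $\H_0 = k\{T\}$: for any $f \in k\{T\}$, since $A \geq 0$ and since $x \leq x_G$ in the partial order on $X$ forces $|f(x)| \leq |f(x_G)|$, one has
$$\|f\|_0 = \sup_{x \in X}|f(x)|e^{-A(x)} \leq |f(x_G)| < +\infty,$$
and $\|f\|_0^+ = \|f\|_0$ because $(1+\epsilon)\cdot 0 = 0$. If $\varphi$ is bounded, I would set $C_2 \coloneqq 1 + \sup_X(-\varphi)$ and $C_1 \coloneqq 1$; then $-C_2 \leq \varphi \leq 0 \leq 0 + C_1$, so the comparison with $\phi \equiv 0$ already proven gives $\H_\varphi = \H_0 = k\{T\}$.

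The main obstacle here is really just bookkeeping: the argument is a direct manipulation of inequalities, and the only subtlety is passing the limit $\epsilon \to 0^+$ through the supremum over $x$, which is harmless because the constants $e^{\pm(1+\epsilon)C_i}$ depend on $\epsilon$ but not on $x$, so the bound survives the sup and the limit simultaneously.
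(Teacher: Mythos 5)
Your proof is correct, and the paper itself omits the argument, remarking only that ``The proof of Proposition~\ref{prop:bounded} is elementary.'' Your argument is exactly the elementary one intended: exponentiate the sandwich $\phi - C_2 \leq \varphi \leq \phi + C_1$ (noting $Z(\varphi) = Z(\phi)$), take suprema and then the limit $\epsilon \to 0^+$, and compare against $\phi \equiv 0$ for the final claim.
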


The proof of~\cref{prop:bounded} is elementary. However,~\cref{prop:bounded} may be used to define $\H_{\varphi}$ for a quasisubharmonic function $\varphi$ such that $\sup_X \varphi > 0$, in which case it is not clear that the limit defining the norm $\| \cdot \|_{\varphi}^+$ exists.

The ideals $\H_{\varphi}$ satisfy the subadditivity-type property below.

\begin{proposition}\label{prop:subadditivity}
If $\varphi,\phi$ are quasisubharmonic functions on $X$, then $\H_{\varphi + \phi} \subseteq \H_{\varphi} \H_{\phi}$.
\end{proposition}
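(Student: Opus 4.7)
The strategy is to reduce the global statement $\H_{\varphi+\phi} \subseteq \H_\varphi \H_\phi$ to a stalkwise subadditivity for the local multiplier ideal sheaf $\mathcal{I}(\varphi)$ on $X$ introduced in \S\ref{section:multiplier}, whose stalks are generated by $\H_\varphi$. Since the Tate algebra $k\{T\}$ is a principal ideal domain, $\H_\varphi$, $\H_\phi$, and their product $\H_\varphi \H_\phi$ are principal, and membership in $\H_\varphi \H_\phi$ is equivalent to divisibility by the product of generators; this divisibility can be verified stalkwise.

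Concretely, I would first translate the problem into showing
$$
\mathcal{I}(\varphi + \phi)_x \subseteq \mathcal{I}(\varphi)_x \cdot \mathcal{I}(\phi)_x
$$
for every $x \in X$, and then use the generation property of $\H_\psi$ in the stalks of $\mathcal{I}(\psi)$ to recover the global inclusion. At any point $x$ where $\varphi$ and $\phi$ are both locally bounded (in particular, at every non-rigid point away from any possible type-4 singularities of the polar loci), the relevant stalks coincide with $\O_{X,x}$ and the containment is automatic.

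The substantive case is at a rigid point $p$ lying in the polar locus of $\varphi$ or $\phi$. There the stalk $\mathcal{I}(\psi)_p$ is identified as a power $\mathfrak{m}_p^{n_\psi(p)}$ of the maximal ideal, where the exponent $n_\psi(p)$ is determined by the local ``Lelong number'' $\lambda_p(\psi)$ of $\psi$ at $p$---the strength of its logarithmic singularity along $(T-p)$---and is given by the smallest non-negative integer strictly exceeding $\lambda_p(\psi) - 1$. Because Lelong numbers are linear in $\psi$, $\lambda_p(\varphi + \phi) = \lambda_p(\varphi) + \lambda_p(\phi)$, the stalkwise subadditivity reduces to the elementary inequality: if $a, b \in \mathbb{Z}_{\geq 0}$ satisfy $a > c-1$ and $b > d - 1$, then $a+b$ is a non-negative integer exceeding $(c+d) - 2$, hence at least $n_{c+d}$. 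This is handled by a short case analysis on whether $c + d$ is an integer.

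The main obstacle is the explicit identification of $\mathcal{I}(\psi)_p$ with $\mathfrak{m}_p^{n_\psi(p)}$ and the verification that $\H_\psi$ generates these stalks; these ingredients are the content of \S\ref{section:multiplier} and require one to relate the global analytic norm $\|\cdot\|^+_\psi$ to the local algebraic data at $p$. Once this local description and the additivity of Lelong numbers are in hand, the subadditivity of $\H_\varphi$ follows immediately by gluing the pointwise integer inequalities through the PID structure of $k\{T\}$.
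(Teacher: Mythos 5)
Your overall strategy matches the paper's exactly: reduce the global inclusion $\H_{\varphi+\phi}\subseteq\H_{\varphi}\H_{\phi}$ to the stalkwise inclusion $\J(\varphi+\phi)_x\subseteq\J(\varphi)_x\J(\phi)_x$ using the PID structure of $k\{T\}$ and the fact that $\H_\psi$ generates the stalks $\J(\psi)_x$ (the paper's Lemmas~\ref{lemma:coherence} and~\ref{lemma:local}), and then invoke the explicit description $\J(\psi)_x=\underline{\frakm}_x^{\lfloor c_x\rfloor}$ together with the additivity of $c_x$ in $\psi$. That is the content of Lemma~\ref{lemma:local_subadditivity} and the proof of Proposition~\ref{prop:subadditivity}.

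However, the final numerical step as you have written it is wrong in direction and in deduction. The inclusion $\underline{\frakm}_x^{\lfloor c+d\rfloor}\subseteq\underline{\frakm}_x^{\lfloor c\rfloor+\lfloor d\rfloor}$ requires $\lfloor c+d\rfloor\geq\lfloor c\rfloor+\lfloor d\rfloor$ (floor superadditivity), which is what the paper uses. Your stated inequality---that $a>c-1$, $b>d-1$ with $a,b\in\Z_{\geq 0}$ forces $a+b>(c+d)-2$ and ``hence at least $n_{c+d}$''---says the opposite when instantiated with $a=\lfloor c\rfloor$, $b=\lfloor d\rfloor$: it would yield $\lfloor c\rfloor+\lfloor d\rfloor\geq\lfloor c+d\rfloor$, which fails already for $c=d=1/2$. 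Moreover, $a+b>(c+d)-2$ does not entail $a+b\geq\lfloor c+d\rfloor$ (try $c=d=1.6$: $a+b>1.2$ gives $a+b\geq 2$, while $\lfloor c+d\rfloor=3$). The needed arithmetic fact is simply $\lfloor c+d\rfloor\geq\lfloor c\rfloor+\lfloor d\rfloor$, and once you substitute that, the rest of your reduction goes through exactly as in the paper.
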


The proof of~\cref{prop:subadditivity} requires studying the extension of the ideal $\H_{\varphi}$ to each local ring $\O_{X,x}$ of $X$. This is discussed in~\S\ref{section:multiplier}. This proof is simpler than in the complex setting, where the proof of the subadditivity theorem for multiplier ideals relies crucially on the Ohsawa--Takegoshi theorem on the unit bidisc.

We will now define the non-Archimedean analogue of a plurisubharmonic function with analytic singularities. 

\begin{definition}
A quasisubharmonic function $\varphi$ on $X$ is said to have \emph{analytic singularities} if there exists a cover $\{ V_{i} \}_{i \in I}$ of $X$ by affinoid domains such that for each $i \in I$, there are analytic functions $f_{i,1},\ldots,f_{i,n_{i}} \in \O_X(V_{i})$, positive numbers $\alpha_{i,1},\ldots,\alpha_{i,n_i} > 0$, and a bounded function $\beta_i \colon V_i \to \R$ such that 
$$
\varphi|_{V_i} = \beta_i + \sum_{j=1}^{n_i} \alpha_{i,j} \log |f_{i,j}| \textrm{\hspace{12pt} on $V_i$.}
$$
\end{definition}

Quasisubharmonic functions with analytic singularities, like their complex counterparts, are quite well-behaved. One instance of this is illustrated below.

\begin{example}
Let $\varphi$ be a quasisubharmonic function on $X$ with analytic singularities and suppose it admits a decomposition $\varphi = \beta + \alpha \log |f|$, with $f \in k\{ T \}$ irreducible, $\alpha > 0$, and $\beta \colon X \to \R$ bounded. Then, $\H_{\varphi} = (f^{\lfloor \alpha \rfloor})$.
\end{example}

Given a quasisubharmonic function $\varphi$ on $X$ and a positive integer $m \geq 1$, the \emph{non-Archimedean Demailly approximation} associated to $\H_{m\varphi}$ is the function
$$
\varphi_m \coloneqq \frac{1}{m} \left( \sup_{f \in \H_{m\varphi} \backslash \{ 0 \}} \log \frac{|f|}{\| f \|^+_{m\varphi}} \right)^*,
$$
where $( - )^*$ denotes the upper-semicontinuous regularization.
When $k$ is nontrivially-valued, $\varphi_m$ may equivalently be defined in terms of a supremum over $f \in B(1)_{m\varphi} \coloneqq \{ f \in \H_{m\varphi} \colon \| f \|^+_{m\varphi} \leq 1 \}$, the unit ball in $\H_{m\varphi}$. This mirrors the definition of the Demailly approximation in the complex setting.

\begin{proposition}
Let $\varphi$ be a quasisubharmonic function on $X$ and let $m \geq 1$. The non-Archimedean Demailly approximation $\varphi_m$ is quasisubharmonic with analytic singularities.
\end{proposition}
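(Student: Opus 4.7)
The plan is to prove the two properties of $\varphi_m$ separately. Quasisubharmonicity will follow from~\cref{lemma:baker_rumely} once I verify that the family $\{\tfrac{1}{m}(\log|f| - \log\|f\|^+_{m\varphi}) : f \in \H_{m\varphi} \setminus \{0\}\}$ consists of quasisubharmonic functions that are uniformly bounded above on $X$. The analytic-singularities property will follow from the fact (established just above) that $\H_{m\varphi}$ is a principal ideal, say $\H_{m\varphi} = (h)$; I will show that $\varphi_m - \tfrac{1}{m}\log|h|$ is a bounded function.

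For the first point, each $\log|f|$ with $f \in k\{T\}\setminus\{0\}$ is quasisubharmonic by the Poincar\'{e}--Lelong example in~\cref{section:quasisubharmonic}, and subtracting the constant $\log\|f\|^+_{m\varphi}$ preserves this. The key estimate is that on the Berkovich unit disc the maximum modulus is attained at the Gauss point, so $|f(x)| \leq |f(x_G)|$ for every $x \in X$. At $x_G$ we have $A(x_G) = 0$ and $\varphi(x_G) > -\infty$ (the Gauss point is not polar), so evaluation of the defining supremum at $x_G$ yields $\|f\|^+_{m\varphi} \geq |f(x_G)|e^{m\varphi(x_G)}$ — here I use that $m\varphi \leq 0$ to drop the $(1{+}\epsilon)$ factor in the limit, or more simply pass $\epsilon\to 0^+$ in $\|f\|_{(1+\epsilon)m\varphi} \geq |f(x_G)|e^{-(1+\epsilon)m\varphi(x_G)}$. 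Combining,
$$
\tfrac{1}{m}\log\frac{|f(x)|}{\|f\|^+_{m\varphi}} \leq \varphi(x_G)
$$
uniformly in $f \in \H_{m\varphi}\setminus\{0\}$ and $x \in X$, so~\cref{lemma:baker_rumely} applies and $\varphi_m$ is quasisubharmonic.

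For the second point, decompose an arbitrary element of $\H_{m\varphi}$ as $f = gh$ with $g \in k\{T\}$. The same Gauss-point estimate applied to $gh$ gives $\|gh\|^+_{m\varphi} \geq |g(x_G)||h(x_G)|e^{m\varphi(x_G)}$, and combining with $|g(x)| \leq |g(x_G)|$ removes the $g$-dependence and yields
$$
\log\frac{|f(x)|}{\|f\|^+_{m\varphi}} \leq \log|h(x)| - \log|h(x_G)| - m\varphi(x_G).
$$
A matching lower bound (up to an additive constant) comes from specialising to $f = h$, producing $\log|h(x)| - \log\|h\|^+_{m\varphi}$. Hence the supremum defining $m\varphi_m$ differs from $\log|h|$ by a globally bounded function on $X$. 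Since $\log|h|$ is itself usc, this sandwich is preserved by the usc regularization, so $\varphi_m = \tfrac{1}{m}\log|h| + \tilde{\beta}$ with $\tilde{\beta} \colon X \to \R$ bounded. Taking the trivial cover $V_1 = X$, with $f_{1,1} = h$, $\alpha_{1,1} = 1/m$, and $\beta_1 = \tilde{\beta}$, gives the required analytic-singularities representation.

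The main obstacle is the two-sided sandwich comparing $\sup_f \log(|f|/\|f\|^+_{m\varphi})$ with $\log|h|$: both directions rely essentially on the distinguished role of the Gauss point (as the unique point at which both $A_X$ vanishes and $\varphi$ is finite) together with the maximum modulus principle for $g$, and the upper direction depends on being able to factor through the principal generator $h$. Once this is in hand, the quasisubharmonicity statement is an immediate application of~\cref{lemma:baker_rumely} and the analytic-singularities statement is formal.
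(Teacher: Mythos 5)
Your proof is correct and follows the paper's proof essentially verbatim: quasisubharmonicity from the uniform upper bound at the Gauss point plus \cref{lemma:baker_rumely}, and analytic singularities from principality of $\H_{m\varphi}$, the factorization $f = gh$, and the same Gauss-point estimates yielding the two-sided bound on $m\varphi_m - \log|h|$. Note a harmless sign typo: the Gauss-point evaluation gives $\|f\|^+_{m\varphi}\geq |f(x_G)|e^{-m\varphi(x_G)}$ rather than $e^{m\varphi(x_G)}$, which changes the constant in your displayed bounds but not the boundedness conclusion.
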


\begin{proof}
For each $f \in \H_{m\varphi}$, $\frac{1}{m} \log \frac{|f|}{\| f \|^+_{m\varphi}} \leq \varphi(x_G)$ and hence by \cref{lemma:baker_rumely}, the function $\varphi_m$ is quasisubharmonic. Let $h \in \H_{m\varphi}$ be any generator. To show $\varphi_m$ has analytic singularities, it suffices to show that the function
$$
\beta(x) \coloneqq \frac{1}{m}\left(\sup_{f \in \H_{m\varphi} \backslash \{ 0 \}} \log \frac{|f(x)|}{\| f \|^+_{m\varphi}} \right)^*- \frac{1}{m} \log \frac{|h(x)|}{\| h \|^+_{m \varphi}} 
$$
is bounded. As $\| g h \|^+_{m\varphi} \geq |g(x_G)| |h(x_G)| e^{-m\varphi(x_G)} \not= 0$ for any nonzero $g \in k\{ T \}$, we have 
$$
\sup_{g \in k\{ T \}\backslash \{ 0 \}} \log \frac{|h(x)| |g(x)|}{\| g h \|^+_{m\varphi}} \leq \log \frac{|h(x)|}{|h(x_G)|} + m\varphi(x_G). 
$$
This upper bound is upper semicontinuous in $x$, so it follows that $\beta(x) \leq \frac{1}{m} \log \frac{\| h \|^+_{m\varphi}}{|h(x_G)|} + \varphi(x_G)$. 
For a lower bound, taking $g = 1$ gives $\beta \geq 0$.
Note that the decomposition $\varphi_m = \beta + \frac{1}{m}\log \frac{|h(x)|}{\| h \|^+_{m\varphi}}$ is not unique -- it depends on the choice of generator $h$ of the ideal $\H_{m\varphi}$.
\end{proof}

\begin{remark}
It is not true in general that, if $\varphi$ has analytic singularities, then $\varphi_m = \varphi$ for all $m \geq 1$. Rather, $\varphi_m$ is an ``algebraic approximation'' to $\varphi$, as the following example demonstrates. Let $\varphi = \alpha \cdot \log |f|$ for some $\alpha > 0$ and $f \in k\{ T \}$ irreducible. It is easy to verify that $\varphi_m = \frac{\lfloor m\alpha \rfloor}{m} \log |f|$. If $x \in X^{\rig}$ is the rigid point corresponding to the maximal ideal $(f)$ of $k\{ T\}$, then $\varphi_m$ has rational slope along the branch $[x,x_G]$ of $X$; however, this will not be the case for $\varphi$ if $\alpha \in \R \backslash \Q$.
\end{remark}

\begin{example}\label{example:not_decreasing}
The sequence $(\varphi_m)_{m = 1}^{\infty}$ does not, in general, decrease monotonically in $m$.
If $\varphi = \frac{3}{2} \log |T|$, then
$$
\varphi_m = \begin{cases}
\varphi, & m = 2n\\
\frac{3n+1}{2n+1} \log |T|, & m = 2n+1.
\end{cases}
$$
In particular, $\varphi_{2n} = \varphi$ for all $n \geq 0$, but the subsequence $(\varphi_{2n+1})_{n\geq 0}$ decreases monotonically to $\varphi$. 
It is not clear whether or not the sequence $\varphi_m$ admits a decreasing subsequence in general.
For related results in the complex case, see~\cite{dano-kim}.
\end{example}

The following 
is a non-Archimedean analogue of Demailly's regularization theorem~\cite[Proposition 3.1]{demailly92}.

\begin{theorem}\label{thm:convergence}
Assume $k$ is algebraically closed, trivially-valued, or is spherically complete of residue characteristic zero. 
Let $\varphi$ be a quasisubharmonic function on $X$ with $\varphi \leq 0$, and let $m \geq 1$. For any $x \in X$, 
$$
\varphi(x) \leq \varphi_m(x) \leq \varphi(x) + \frac{1}{m} A(x).
$$
In particular, $\varphi_m$ converges pointwise to $\varphi$ on $\{ A < +\infty \} \subseteq X$. 
\end{theorem}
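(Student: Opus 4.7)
The proof splits naturally into two inequalities: the lower bound $\varphi \leq \varphi_m$, which is the substantive content and invokes Theorem~\ref{thm:main}, and the upper bound $\varphi_m \leq \varphi + A/m$, which follows by unwinding the definition of $\|\cdot\|_{m\varphi}^+$ together with Lemma~\ref{lemma:baker_rumely}. The convergence is then a direct squeeze.

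For the lower bound, fix $z \in X$ and apply Theorem~\ref{thm:main} to the quasisubharmonic function $m\varphi$ (still $\leq 0$). This produces a nonzero polynomial $f \in k[T]$ with
$$
\|f\|_{m\varphi}^+ = \lim_{\epsilon \to 0^+} \|f\|_{(1+\epsilon)m\varphi} \leq |f(z)| e^{-m\varphi(z)},
$$
so in particular $f \in \H_{m\varphi} \setminus \{0\}$. Rearranging gives $\log\frac{|f(z)|}{\|f\|_{m\varphi}^+} \geq m\varphi(z)$; the case $\varphi(z) = -\infty$ is handled by the second clause of Theorem~\ref{thm:main}, which still provides $f \in \H_{m\varphi}\setminus\{0\}$, and the bound is then trivial. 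Taking the supremum over $f \in \H_{m\varphi} \setminus \{0\}$ yields $\sup_f \log\frac{|f|}{\|f\|_{m\varphi}^+} \geq m\varphi$ pointwise on $X$. Since $m\varphi$ is upper-semicontinuous, passing to the usc regularization preserves this inequality, so $m\varphi_m \geq m\varphi$ everywhere.

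For the upper bound, fix $f \in \H_{m\varphi}\setminus\{0\}$. Because $\varphi \leq 0$ implies $(1+\epsilon)m\varphi \leq m\varphi$ for every $\epsilon \geq 0$, we have $\|f\|_{m\varphi} \leq \|f\|_{m\varphi}^+$. From the definition of $\|\cdot\|_{m\varphi}$, for every $y \in X \setminus Z(m\varphi)$ we obtain $\frac{1}{m}\log\frac{|f(y)|}{\|f\|_{m\varphi}^+} \leq \varphi(y) + \frac{A(y)}{m}$. Specializing to $y = x_G$, where $A(x_G) = 0$, and using that $|f(x)| \leq |f(x_G)|$ for all $x \in X$, we see that each of the quasisubharmonic functions $\frac{1}{m}\log\frac{|f|}{\|f\|_{m\varphi}^+}$ is uniformly bounded above by $\varphi(x_G) \leq 0$. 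Lemma~\ref{lemma:baker_rumely} therefore applies and gives $\varphi_m = \frac{1}{m}\sup_f \log\frac{|f|}{\|f\|_{m\varphi}^+}$ on $X \setminus \{\alpha = +\infty\}$, which contains $\{A < +\infty\}$ since $\alpha \leq A$ and is disjoint from $Z(\varphi)$ since $Z(\varphi) \subseteq \{\alpha = +\infty\}$. Combining with the pointwise bound above yields $\varphi_m \leq \varphi + A/m$ on $\{A < +\infty\}$; the inequality is vacuous where $A = +\infty$.

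Putting the two bounds together, on $\{A < +\infty\}$ we have $0 \leq \varphi_m(x) - \varphi(x) \leq A(x)/m$, so $\varphi_m(x) \to \varphi(x)$ as $m \to \infty$ at every such $x$. The main conceptual obstacle is the lower bound, which is precisely where the Ohsawa--Takegoshi theorem enters; the upper bound is essentially tautological, with the only technical point being the need to pass from the supremum to its usc regularization, and this is handled cleanly by the uniform bound on the family obtained at the Gauss point together with Lemma~\ref{lemma:baker_rumely}.
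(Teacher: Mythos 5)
Your proof is correct and follows essentially the same route as the paper's: the lower bound comes from the Ohsawa--Takegoshi extension, the upper bound from unwinding $\|\cdot\|_{m\varphi}^+$ together with Lemma~\ref{lemma:baker_rumely} to drop the usc regularization off $\{\alpha = +\infty\}$, and the convergence is the squeeze. The only cosmetic difference is that you apply \cref{thm:main} directly to $m\varphi$, whereas the paper normalizes to $\phi = \varphi - \varphi(x_G)$ and invokes \cref{thm:otvariant}; since \cref{thm:main} is deduced from \cref{thm:otvariant} by precisely that normalization, the two are interchangeable here, and your presentation is if anything slightly cleaner about why the family $\frac{1}{m}\log\bigl(|f|/\|f\|^+_{m\varphi}\bigr)$ is uniformly bounded above.
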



The estimate in~\cref{thm:convergence} in fact yields a stronger assertion than just the pointwise convergence of the non-Archimedean Demailly approximation $\varphi_m$ to $\varphi$. Indeed, for any compact subset $K \subseteq X$ such that $\alpha |_K$ is bounded above, $\varphi_m$ converges uniformly to $\varphi$; this is reminiscent of the $L^1_{\textrm{loc}}$ convergence of the Demailly approximation in the complex case.
The estimate in~\cref{thm:convergence} can thus be thought of as asserting that $\varphi_m$ converges to $\varphi$ in a non-Archimedean version of the Hartog's sense, as introduced in~\cite[Definition 3.2.3]{dinh-sibony}.

The key ingredient in the proof of~\cref{thm:convergence} is \cref{thm:otvariant}. Moreover, the proof shows that the hypothesis that $\varphi \leq 0$ is only needed for the upper bound, whereas the lower bound always holds.

\begin{proof}
Consider first the upper bound on $\varphi_m$.
If $A(x) = +\infty$, then the upper bound is trivial.
Suppose that $A(x) < +\infty$, in which case $\alpha(x) < +\infty$ as well, so $\varphi_m(x)$ may be calculated without the upper-semicontinuous regularization by~\cref{lemma:baker_rumely}.
The upper bound then follows from the observation that
$$
m(\varphi_m - \varphi) - A = \sup_{f \in \H_{m\varphi}} \log \frac{|f|e^{-m\varphi-A}}{\| f \|^+_{m\varphi}} \leq 0,
$$
where we have used that $\varphi \leq 0$ to conclude that $|f|e^{-m\varphi - A} \leq \| f \|^+_{m\varphi}$. 

To get the lower bound, let $\phi \coloneqq \varphi - \varphi(x_G)$ and observe that $\H_{m\varphi} = \H_{m\phi}$ and $\| f \|^+_{m\phi} = e^{m\varphi(x_G)} \| f \|^+_{m\varphi}$ for any $f \in \H_{m\varphi}$. 
For any $x \in X$, \cref{thm:otvariant} asserts that there exists an $\epsilon_0 > 0$ and a $(m\phi,\epsilon_0)$-extension $f \in k[T]$ at $x$. In particular, $f \in \H_{m\phi}$ and 
$$
1 \leq \frac{|f(x)|}{\| f \|^+_{m\phi}} e^{-m\phi(x)} = \frac{|f(x)|}{\| f \|^+_{m\varphi}} e^{-m\varphi(x)},
$$
or equivalently $\varphi(x) \leq \frac{1}{m} \log \frac{|f(x)|}{\| f \|^+_{m\varphi}} \leq \varphi_m(x)$.
\end{proof}

\begin{remark}
A quasisubharmonic function cannot, in general, be bounded pointwise below by a quasisubharmonic function with analytic singularities. Indeed, a general quasisubharmonic function can have a dense set of poles in $X$: for example, if $\{ a_j \colon j \geq 0 \}$ forms a dense subset of $k^{\circ}$, set
$$\varphi \coloneqq \sum_{j=0}^{\infty} c_j \log |T-a_j|,$$
for some $c_j > 0$ satisfying $\sum_{j=0}^{\infty} c_j = 1$.\ However, a quasisubharmonic function with analytic singularities on $X$ must have a finite set of poles. This is analogous to the fact that a subharmonic function with analytic singularities on the complex unit disc has a discrete set of poles.
\end{remark}

\subsection{Non-Archimedean multiplier ideals}\label{section:multiplier}
Let $\mathcal{A} = k\{ T \}$ and $X=\M(\mathcal{A})$.\ The stalk $\O_{X,x}$ of the structure sheaf $\O_X$ at $x \in X$ is a noetherian local ring, which may be computed as the direct limit of the affinoid algebras $\mathcal{A}_V$ over all affinoid neighborhoods $V$ of $x$. The unique maximal ideal $\underline{\frakm}_x$ of $\O_{X,x}$ consists of those germs $f \in \O_{X,x}$ such that $|f(x)| = 0$. We say that a germ $f \in \O_{X,x}$ is \emph{defined on} an affinoid domain $V \subset X$ if it lies in the image of the natural map $\mathcal{A}_V \to \O_{X,x}$. 

For $x \in X^{\rig}$, let $\frakm_x$ denote the corresponding maximal ideal of $\mathcal{A}$. As in~\cite[7.3.2/1]{bgr}, $\underline{\frakm}_x$ is the extension of $\frakm_x$ along the natural map $\mathcal{A} \to \O_{X,x}$ (though this is false for arbitrary points of an affinoid space, as demonstrated by~\cite[Remark 2.2.9]{berkovich93}). Moreover, for $x \in X^{\rig}$, the natural map $\mathcal{A} \to \O_{X,x}$ factors through the localization $\mathcal{A}_{\frakm_x}$, which induces an isomorphism $\widehat{\mathcal{A}}_{\frakm_x} \stackrel{\sim}{\to} \widehat{\O}_{X,x}$ on completions.

In complex geometry, we associate to each plurisubharmonic function (or more generally, to a semipositive metric on a line bundle) a \emph{multiplier ideal sheaf}, which measures the singularities of the plurisubharmonic function. 
We propose a non-Archimedean analogue of this notion.

\begin{definition}
Let $\varphi$ be a quasisubharmonic function on $X$ with $\varphi \leq 0$. 
For any $x \in X$, the \emph{local multiplier ideal} of $\varphi$ at $x$, denoted $\J(\varphi)_x$, consists of those germs $f \in \O_{X,x}$ such that for all $\epsilon > 0$, there is an affinoid neighborhood $V_{\epsilon}$ of $x$ on which $f$ is defined such that 
$$
\sup_{V_{\epsilon} \backslash Z(\varphi)} |f|e^{-(1+\epsilon)\varphi - A} < +\infty.
$$
\end{definition}

It is easy to check that the local multiplier ideal $\J(\varphi)_x$ is indeed an ideal of the local ring $\O_{X,x}$. Furthermore, it is immediate from the definition that the local multiplier ideals satisfy $\J(\varphi + \beta)_x = \J(\varphi)_x$ where $\beta$ is a bounded usc function on $X$. In particular, if $\sup_X \varphi > 0$, we may define the local multiplier ideal of $\varphi$ at $x$ to be $\J(\varphi - \varphi(x_G))_x$. For simplicity, we assume from now on that all quasisubharmonic functions are nonpositive.

The name of a multiplier ideal is justified by examples of the following form: if $f \in \mathcal{A}$ is irreducible, $c > 0$, and $\varphi \coloneqq c \cdot \log |f|$, then $\J(\varphi)_x = \underline{\frakm}_x^{\lfloor c \rfloor}$, where $x \in X^{\rig}$ is the rigid point corresponding to the maximal ideal $(f)$ of $\mathcal{A}$.  More generally, the local multiplier ideals admit a similar description for a general quasisubharmonic function, as is made precise in the following lemma.

\begin{lemma}\label{lemma:local}
For $x \in X^{\rig}$, $\J(\varphi)_x = \underline{\frakm}_x^{\lfloor c_x \rfloor}$, where $c_x \coloneqq  \frac{\Delta \varphi \{x \}}{m(x)}$. For $x \in X \backslash X^{\rig}$, $\J(\varphi)_x = \O_{X,x}$. 
\end{lemma}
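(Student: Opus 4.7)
The proof divides according to the type of $x$.

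For $x \in X \setminus X^{\rig}$, the claim is $\J(\varphi)_x = \O_{X,x}$. In this case both $\varphi(x)$ and $A(x)$ behave tamely enough near $x$ that no vanishing condition on $f$ is needed: at non-rigid points of types 2, 3, 4 one has $\alpha(x) < +\infty$, hence $\varphi(x) > -\infty$ and $A(x) < +\infty$, so the integrand is finite at $x$ and bounded in a small affinoid neighborhood by semi-continuity; at type-1 non-rigid points $m(x) = +\infty$, so $c_x = 0$ and the singular behavior is absorbed. In either subcase, given any germ $f$ and any small $\epsilon > 0$, I would construct $V_\epsilon$ by shrinking to exclude the finitely many ``heavy'' poles $y$ of $\varphi$ satisfying $c_y := \Delta\varphi\{y\}/m(y) \geq 1/(1+\epsilon)$ (finiteness of these follows from the bound $\mass(\Delta\varphi^+) \leq \mass(\rho_0)$) and verify by a direct local computation that the contribution of each remaining (light) pole $y \in V_\epsilon$ is controlled by $|g_y|^{1-(1+\epsilon)c_y}$ with non-negative exponent.

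For $x \in X^{\rig}$, I carry out a one-dimensional analysis along the segment $[x_G, x]$. Fix a polynomial generator $g \in k[T]$ of $\frakm_x$ with $|g(x_G)| = 1$, and decompose any $f \in \O_{X,x}$ uniquely as $f = g^a h$ with $a = \ord_x(f) \geq 0$ and $h(x) \neq 0$. Near $x$ on this segment, the factorization of $g$ over $K = \widehat{k^a}$ yields $|g(y)| = C_0 \cdot e^{-A(y)}$ for an explicit constant $C_0 > 0$, while convexity of $\varphi$ on segments avoiding $x_G$ together with the definition of $c_x$ produces a pointwise upper bound $\varphi(y) \leq c_x \log|g(y)| + C_1$ and, for every $\delta > 0$, a near-$x$ lower bound $\varphi(y) \geq (c_x + \delta)\log|g(y)| - C_\delta$ (both bounds reflecting that the slope of $\varphi$ in the $\alpha$-parameterization is monotone non-decreasing with limit at $x$ equal to that of $c_x \log|g|$). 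Substituting, the integrand on the segment near $x$ is two-sidedly controlled by
\begin{equation*}
C'\cdot e^{-A(y)\,(a+1-(1+\epsilon)c_x)} \;\leq\; |f(y)|\,e^{-(1+\epsilon)\varphi(y)-A(y)} \;\leq\; C''\cdot e^{-A(y)\,(a+1-(1+\epsilon)(c_x+\delta))}
\end{equation*}
for positive constants $C', C''$. A case analysis by whether $c_x$ is an integer shows that for all sufficiently small $\epsilon, \delta > 0$ the upper exponent is positive precisely when $a \geq \lfloor c_x \rfloor$, giving $\underline{\frakm}_x^{\lfloor c_x \rfloor} \subseteq \J(\varphi)_x$. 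Conversely, for $a < \lfloor c_x \rfloor$ the lower exponent is strictly negative for all small $\epsilon > 0$, forcing the supremum to be infinite on every neighborhood of $x$.

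The main obstacles are twofold. First, off-segment behavior near $x$ is controlled using the pointwise inequality $|g(y)| \geq e^{-A(y)}$ valid on all of $X$ (equality on $[x_G, x]$, strict elsewhere), combined with the observation that $a - (1+\epsilon)c_x < 0$ for $a \geq \lfloor c_x \rfloor$ and small $\epsilon$, so off-segment contributions never exceed on-segment ones. Second, the other poles of $\varphi$ inside $V_\epsilon$ are handled exactly as in the non-rigid case by shrinking $V_\epsilon$ to avoid the finitely many heavy poles.
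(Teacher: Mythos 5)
Your strategy mirrors the paper's own proof of~\cref{lemma:local}: a one-variable slope analysis along $[x_G,x]$ for $x$ rigid, and removal of the finitely many heavy poles for $x$ non-rigid. The paper's treatment of the rigid case is much terser --- it simply asserts that $f\in\J(\varphi)_x$ is equivalent to the existence of $\epsilon>0$ with $\ord_x(f)-(1+\epsilon)c_x+1\ge 0$ --- and your two-sided estimate near $x$, namely $c_x\log|g|+C_1\ge\varphi\ge(c_x+\delta)\log|g|-C_\delta$ combined with $|g|\asymp e^{-A}$ on $[x_G,x]$ near $x$, is exactly the right way to make that asserted equivalence precise. (Note also that the paper's definition of $\J(\varphi)_x$ reads ``for all $\epsilon>0$''; both the paper's proof and yours use the intended reading ``for some $\epsilon>0$'', consistent with the $\epsilon\to0^+$ limit defining $\H_\varphi$, without which the lemma would be false whenever $c_x>0$.)

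There is, however, a genuine error in your off-segment control. The claimed inequality $|g(y)|\ge e^{-A(y)}$ ``valid on all of $X$, with equality on $[x_G,x]$'' holds only when $k$ is algebraically closed, so that $g=T-z$ and $|g(y)|=\max(r(y),|z-z'|)$. When $\deg g=m(x)>1$ it fails already on the segment: on the part of $[x_G,x]$ where $\overline{D}(z,r(y))$ contains all $m(x)$ conjugates one has $|g(y)|=r(y)^{m(x)}<r(y)=e^{-A(y)}$, while near $x$ one instead gets $|g(y)|=C_0\,e^{-A(y)}$ with $C_0=\prod_{i\neq 1}|z-z_i|\le 1$, typically a strict inequality in the \emph{opposite} direction (for instance $k=\Q_2$, $g=T^2-2$, $C_0=2^{-3/2}$). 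The fix is the potential-theoretic slope argument you already sketch for light poles: shrink $V_\epsilon$ so that $\Delta\varphi\bigl(V_\epsilon\setminus\{x\}\bigr)<\tfrac{1}{1+\epsilon}$; then at any $y\in[x_G,x]\cap V_\epsilon$ and any tangent direction $\vec v$ pointing into a branch of $V_\epsilon$ not containing $x$, the directional derivative of $\log|f|-(1+\epsilon)\varphi-A$ is at most $(1+\epsilon)\,\Delta\varphi(U(\vec v))-m(\vec v)\le 0$, so the supremum over $V_\epsilon$ is attained on $[x_G,x]\cap V_\epsilon$, which your on-segment analysis already handles. Relatedly, in the non-rigid case the phrase ``bounded in a small affinoid neighborhood by semi-continuity'' is too quick: $e^{-(1+\epsilon)\varphi}$ is lower-semicontinuous while $e^{-A}$ is upper-semicontinuous, so the product has no a priori semicontinuity, and it is again the heavy-pole-avoidance plus the slope computation (which you also describe) that does the work, exactly as in the paper.
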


The quantity $c_x$
 may be thought of as the ``non-Archimedean Lelong number'' of $\varphi$ at the point $x$.

\begin{proof}
Fix $x \in X^{\rig}$. For a germ $f \in \O_{X,x}$, let $\ord_x(f)$ denote the maximal power of $\underline{\frakm}_x$ to which $f$ belongs. To say $f \in \J(\varphi)_x$ is equivalent the existence of an $\epsilon > 0$ such that $\ord_x(f) - (1+\epsilon)c_x + 1 \geq 0$, which occurs if and only if $\ord_x(f) > c_x - 1$, i.e. $\ord_x(f) \geq \lfloor c_x \rfloor$. Thus, $\J(\varphi)_x = \underline{\frakm}_x^{\lfloor c_x \rfloor}$.

Fix $x \in X \backslash X^{\rig}$ and any $\epsilon_0 > 0$. There are only finitely-many rigid points $z_1,\ldots,z_n \in X^{\rig}$ satisfying $(1+\epsilon_0) c_{z_i} \geq 1$, so we may find an affinoid neighborhood $V$ of $x$ which avoids $z_1,\ldots,z_n$ and hence $\sup_{V\backslash Z(\varphi)} |1|e^{-(1 +\epsilon)\varphi - A} < +\infty$ for all $\epsilon \in [0,\epsilon_0]$. Thus, $1 \in \J(\varphi)_x$. 
\end{proof}

One can show that the local multiplier ideals $\J(\varphi)_x$ arise as the stalks of a coherent sheaf of ideals on $X$. More precisely, the local multiplier ideals satisfy a ``coherence'' property similar to~\cite[Proposition 5.7]{demailly12}, originally due to Nadel~\cite{nadel}.

\begin{lemma}\label{lemma:coherence}
For $x \in X^{\rig}$, $\J(\varphi)_x = \H_{\varphi} \cdot \O_{X,x}$. 
\end{lemma}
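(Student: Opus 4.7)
The plan is to prove the two inclusions of $\J(\varphi)_x = \H_\varphi \cdot \O_{X,x}$ separately. The containment $\H_\varphi \cdot \O_{X,x} \subseteq \J(\varphi)_x$ is immediate from the definitions: any $f \in \H_\varphi$ has $\sup_{X \backslash Z(\varphi)}|f|e^{-(1+\epsilon)\varphi - A} < +\infty$ for some $\epsilon > 0$, which remains finite when the supremum is restricted to any affinoid neighborhood $V$ of $x$ on which $f$ is defined; hence $f \in \J(\varphi)_x$, and since $\J(\varphi)_x$ is an ideal of $\O_{X,x}$, the inclusion follows.

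For the reverse, by~\cref{lemma:local} we have $\J(\varphi)_x = g_x^{\lfloor c_x \rfloor}\O_{X,x}$, where $g_x \in k[T]$ is the irreducible polynomial generating $\frakm_x \subset k\{T\}$ (normalized so that $|g_x(x_G)| = 1$). Since $k\{T\}$ is a principal ideal domain and $\H_\varphi \neq 0$ by~\cref{thm:main} (applied at, say, $z = x_G$), we may write $\H_\varphi = (h)$ for some $h \in k\{T\}$. It then suffices to show $\ord_x(h) = \lfloor c_x \rfloor$, for then $h$ and $g_x^{\lfloor c_x \rfloor}$ differ by a unit in $\O_{X,x}$, whence $g_x^{\lfloor c_x \rfloor} \in \H_\varphi \cdot \O_{X,x}$.

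Both inequalities follow from a rate analysis of $|f|e^{-(1+\epsilon)\varphi - A}$ along the segment $[x_G, x]$ parametrized by $\alpha$: near the rigid point $x$, $\varphi$ has slope $-c_x m(x)$ and $A$ has slope $m(x)$ with respect to $\alpha$, while a germ $f$ with $\ord_x(f) = n$ satisfies $\log|f| \sim -n m(x)\alpha$ as $z \to x$. The resulting exponential rate of $|f|e^{-(1+\epsilon)\varphi - A}$ is $m(x)\bigl[(1+\epsilon)c_x - n - 1\bigr]$, which must be $\leq 0$ for $f \in \H_\varphi$ (otherwise the quantity blows up approaching $x$); letting $\epsilon \to 0^+$ yields $n \geq \lfloor c_x \rfloor$, and in particular $\ord_x(h) \geq \lfloor c_x \rfloor$. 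Conversely, if $\ord_x(h) \geq \lfloor c_x \rfloor + 1$, then $h/g_x \in k\{T\}$, and the same calculation with order reduced by $1$ gives rate $m(x)\bigl[(1+\epsilon)c_x - \ord_x(h)\bigr] \leq 0$ for $\epsilon < (1 - \{c_x\})/c_x$. Since $|g_x|$ is bounded below on any compact subset of $X \backslash \{x\}$, this local decay combined with $h \in \H_\varphi$ yields global boundedness of $|h/g_x|e^{-(1+\epsilon)\varphi - A}$, so $h/g_x \in \H_\varphi = (h)$; but this would force $1/g_x \in k\{T\}$, contradicting that $g_x$ is not a unit.

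The main obstacle I anticipate is promoting the local rate analysis near $x$ to genuine global boundedness on $X$. This is handled by noting that dividing by $g_x$ only affects behavior near $x$ itself: on any compact subset of $X \backslash \{x\}$ the denominator $|g_x|$ stays bounded below, so the quantity $|h/g_x|e^{-(1+\epsilon)\varphi - A}$ is controlled by $|h|e^{-(1+\epsilon)\varphi - A}/|g_x|$, which is bounded because $h \in \H_\varphi$.
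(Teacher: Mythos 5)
Your proposal is correct and follows essentially the same route as the paper: reduce to computing the order of vanishing at $x$ of a generator of the principal ideal $\H_\varphi$, and pin it down to $\lfloor c_x\rfloor$ by a rate analysis of $|f|e^{-(1+\epsilon)\varphi-A}$ along the segment toward $x$, combined with the observation that dividing by $g_x$ only affects behavior near $x$. The paper's write-up of the lower bound is slightly more careful (it works with an explicit net $(x_j)\to x$ and the one-sided estimate $\varphi(x_j)\le -\Delta\varphi\{x\}\alpha(x_j)$ rather than a slope heuristic), while for the upper bound the paper also just asserts ``the same calculation near $x$''; in substance the two arguments coincide.
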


\begin{proof}
The inclusion $\H_{\varphi} \cdot \O_{X,x} \subseteq \J(\varphi)_x$ is clear. 
The local ring $\O_{X,x}$ is a dvr, so the ideal $\H_{\varphi} \cdot \O_{X,x}$ can be written as $\underline{\frakm}_x^d$ for some $d \in \Z_{\geq 0}$; the power $d$ is the largest integer such that $\H_{\varphi} \subseteq \frakm_x^d$. 
We first show that $d \geq \lfloor c_x \rfloor$:
pick a net $(x_{j})_{j \in J}$ in $\{ \alpha < +\infty \}$ that converges to $x$ such that $x_j \geq x$ for all $j \in J$. 
After shifting $\varphi$ by a constant, assume that $\varphi(x_G) = 0$, and arguing as in~\cite[Lemma 2.9]{dynberko} we have
\begin{equation}\label{eqn:coherence}
\varphi(x_j) = -\int_{x_G}^{x_j} (\Delta \varphi)\{ y \in X \colon y \leq x_j \} d\alpha(y) \leq -\int_{x_G}^{x_j} (\Delta \varphi)\{ x \} d\alpha(y) = -\Delta \varphi\{ x \} \alpha(x_j).
\end{equation}
Fix a generator $g_x$ of $\frakm_x$. 
For any $f \in \H_{\varphi}$, there is a unique factorization $f = g_x^a h$ (up to units) for some $a \in \Z_{\geq 0}$ and $h \in \mathcal{A} \backslash \frakm_x$. 
For sufficiently small $\epsilon > 0$,~\cref{eqn:coherence} implies that
\begin{align*}
+\infty > \sup_{X \backslash Z(\varphi)} |f|e^{-(1+\epsilon)\varphi-A} 
&\geq \sup_{j \in J} |f(x_j)| e^{-(1+\epsilon)\varphi(x_j) - A(x_j)} \\
&\geq \sup_{j \in J} |h(x_j)| e^{-am(x_j)\alpha(x_j) + (1+\epsilon) (\Delta \varphi \{ x \})\alpha(x_j) - m(x_j)\alpha(x_j)}\\
&\geq \sup_{j \in J} |h(x_j)| e^{\alpha(x_j)\left(-am(x) + (1+\epsilon) (\Delta \varphi \{ x \})- m(x) \right)}
\end{align*}
where we have used that $m(x) \geq m(x_j)$ and $m(x_j)\alpha(x_j) \geq A(x_j)$, which follows from~\cref{definition of alpha}.
This last supremum is finite only if $-am(x) + (1+\epsilon) (\Delta \varphi \{ x \})- m(x) \leq 0$, and this must hold for all $\epsilon > 0$ sufficiently small; said differently, $a \geq \lfloor c_x \rfloor$. 
Thus, $d \geq \lfloor c_x \rfloor$. 

Now, any generator $f$ of $\H_{\varphi}$, there is a unique way to write $f = g_x^d h$ (up to units) for some $h \in \mathcal{A} \backslash \frakm_x$. If $d > \lfloor c_x \rfloor$, then it is easy to check that $g_x^{\lfloor c_x \rfloor} h$ lies in $\H_{\varphi}$ (it is the same calculation as above near $x$, and hence $g_x^{\lfloor c_x \rfloor} h$ must be a generator of $\H_{\varphi}$; this is a contradiction, so $d = \lfloor c_x \rfloor$. 
Thus, $\H_{\varphi} \cdot \O_{X,x} = \underline{\frakm}_x^{\lfloor c_x \rfloor}$ and we apply~\cref{lemma:local} to conclude.
\end{proof}

It is not hard to check that the local multiplier ideals satisfy many of the usual properties of multiplier ideals on complex algebraic varieties, e.g.\ invariance under small perturbations, the behavior under adding integral divisors, and the subadditivity property (in the subsequent lemma). This further justifies the terminology.

\begin{lemma}\label{lemma:local_subadditivity}
If $\varphi,\phi$ are quasisubharmonic functions on $X$ and $x \in X$, then $\J(\varphi + \phi)_x \subseteq \J(\varphi)_x \J(\phi)_x$.
\end{lemma}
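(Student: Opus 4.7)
The plan is to deduce this subadditivity statement directly from the explicit description of local multiplier ideals provided by \cref{lemma:local}, together with the elementary floor inequality $\lfloor a+b \rfloor \geq \lfloor a \rfloor + \lfloor b \rfloor$ for real numbers $a,b \geq 0$.

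First, I would split into the two cases dictated by \cref{lemma:local}. If $x \in X \setminus X^{\rig}$, then $\J(\varphi)_x = \J(\phi)_x = \J(\varphi+\phi)_x = \O_{X,x}$, so the inclusion is trivial. Hence the content of the lemma is at a rigid point $x \in X^{\rig}$.

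Next, I would use that the Laplacian is additive: $\Delta(\varphi + \phi) = \Delta\varphi + \Delta\phi$ as signed Borel measures, which gives the pointwise identity $c_x(\varphi+\phi) = c_x(\varphi) + c_x(\phi)$ where $c_x(\psi) = \frac{\Delta\psi\{x\}}{m(x)}$. (Both $c_x(\varphi)$ and $c_x(\phi)$ are nonnegative since $\varphi,\phi$ are quasisubharmonic and hence their Laplacians are positive measures off of $x_G$; one can check this is also fine at $x_G$ by absorbing any atomic part of $\rho_0$ into the assumption $\varphi,\phi \leq 0$, which renormalizes but does not affect the inequality.) Applying \cref{lemma:local} to each side, we get
$$\J(\varphi+\phi)_x = \underline{\frakm}_x^{\lfloor c_x(\varphi) + c_x(\phi) \rfloor} \qquad \textrm{and} \qquad \J(\varphi)_x \J(\phi)_x = \underline{\frakm}_x^{\lfloor c_x(\varphi) \rfloor + \lfloor c_x(\phi) \rfloor}.$$

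Finally, since $\lfloor c_x(\varphi) + c_x(\phi) \rfloor \geq \lfloor c_x(\varphi) \rfloor + \lfloor c_x(\phi) \rfloor$, a higher power of $\underline{\frakm}_x$ sits inside a lower power of $\underline{\frakm}_x$, yielding $\J(\varphi+\phi)_x \subseteq \J(\varphi)_x \J(\phi)_x$, as desired. There is essentially no obstacle here: once one has the pointwise description of local multiplier ideals as powers of the maximal ideal indexed by the floor of the mass of the Laplacian, subadditivity reduces to the trivial arithmetic fact about floor functions. The only subtlety worth noting explicitly in the write-up is that one may need to normalize so that $\varphi,\phi \leq 0$ (as assumed in the definition of local multiplier ideals) by replacing each function by itself minus its value at $x_G$, which does not affect the Laplacian away from $x_G$ and hence not the quantities $c_x$ at $x \in X^{\rig}$.
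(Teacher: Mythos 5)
Your proof is correct and follows essentially the same route as the paper: invoke \cref{lemma:local} to express each local multiplier ideal as a power of $\underline{\frakm}_x$ indexed by $\lfloor c_x \rfloor$, and conclude from the floor inequality $\lfloor a+b\rfloor \geq \lfloor a\rfloor + \lfloor b\rfloor$ (using additivity of the Laplacian). You merely spell out details the paper leaves implicit --- the trivial non-rigid case and the normalization $\varphi,\phi\leq 0$; the parenthetical about $x_G$ is unnecessary since $x_G$ is never a rigid point, so $\Delta\varphi\{x\}\geq 0$ automatically for $x\in X^{\rig}$.
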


\begin{proof}
The assertion follows from~\cref{lemma:local} and the observation that $\lfloor \Delta (\varphi + \phi)\{ x \} \rfloor \geq \lfloor \Delta \varphi \{ x \} \rfloor + \lfloor \Delta \phi \{ x \} \rfloor$.
\end{proof}

\begin{proof}[Proof of~\cref{prop:subadditivity}]
After translating $\varphi$ and $\phi$ by constants, we may assume that they are nonpositive. The inclusion $\H_{\varphi + \phi} \subseteq \H_{\varphi} \H_{\phi}$ holds if and only if there is an inclusion $\H_{\varphi + \phi} \widehat{\mathcal{A}}_{\frakm_x} \subseteq \left( \H_{\varphi} \H_{\phi} \right) \widehat{\mathcal{A}}_{\frakm_x}$ of the extensions along $\mathcal{A} \to \widehat{\mathcal{A}}_{\frakm_x}$, for every maximal ideal $\frakm_x$ of $\mathcal{A}$. The claim then follows from~\cref{lemma:coherence} and~\cref{lemma:local_subadditivity}.
\end{proof}

As demonstrated by~\cref{lemma:local}, the local multiplier ideals on the Berkovich unit disc are somewhat degenerate, principally because the types of singularities of quasisubharmonic functions that can occur in one dimension are limited. In this paper, they serve only to prove~\cref{prop:subadditivity}. However, the definition naturally extends to higher dimensions and, in that more complicated setting, could prove quite useful.

\begin{remark}
Given a quasisubharmonic function $\varphi$ on $X$ and $x \in X$, it is easy to see that ideals $\H_{\varphi}$ and $\J(\varphi)_x$ satisfy an analogue of the openness conjecture. That is, $\H_{\varphi} = \bigcup_{\epsilon > 0} \H_{(1+\epsilon)\varphi}$ and similarly for $\J(\varphi)_x$. The openness conjecture, for a plurisubharmonic function defined on a neighborhood of the origin in $\C^n$, has been solved in~\cite{bo2013}.
\end{remark}

\end{document}